\newcommand{\1}{\mathbbm{1}}
\newtheorem{theorem}{Theorem}
\newtheorem{lem}{Lemma}
\newtheorem{corollary}{Corollary}
\newtheorem*{theorem*}{theorem}
\newtheorem*{AH}{The Alternative Hypothesis (AH)}
\newtheorem*{MT}{Montgomery Theorem (MT)}
\newtheorem*{MTSP}{Montgomery Theorem for Sums over Pairs (MT-Pairs)}
\newtheorem*{AH2}{The Alternative Hypothesis for Differences of Zeros (AH-Pairs)}
\newtheorem*{AH2Strong}{The Strong Alternative Hypothesis for Differences of Zeros (Strong AH-Pairs)}
\newtheorem*{ESH}{The Essential Simplicity Hypothesis (ESH)}
\newtheorem*{AH3}{The Alternative Hypothesis for Density of Zeros Spacings (AH-Density)}
\theoremstyle{remark}
\newtheorem{remark}{Remark}
\definecolor{pink}{rgb}{1,.2,.6}
\definecolor{orange}{rgb}{0.7,0.3,0}
\definecolor{blue}{rgb}{.2,.6,.75}
\definecolor{green}{rgb}{.4,.7,.4}
\definecolor{purple}{RGB}{127,0,255}
\numberwithin{equation}{section}
\begin{document}

\title[Alternative Hypothesis]{The Alternative Hypothesis for Zeros of the Riemann Zeta-Function}

\author[Baluyot]{Siegfred Alan C. Baluyot}
\address{Department of Mathematics, Eastern Carolina University}
\email{baluyots24@ecu.edu}

\author[Goldston]{Daniel Alan Goldston}
\address{Department of Mathematics and Statistics, San Jose State University}
\email{daniel.goldston@sjsu.edu}

\author[Suriajaya]{Ade Irma Suriajaya}
\address{Faculty of Mathematics, Kyushu University}
\email{adeirmasuriajaya@math.kyushu-u.ac.jp}

\author[Turnage-Butterbaugh]{Caroline L. Turnage-Butterbaugh}
\address{Department of Mathematics and Statistics, Carleton College}
\email{cturnageb@carleton.edu}

\keywords{Riemann zeta-function, zeros, Alternative Hypothesis, pair correlation, zero multiplicity}
\subjclass[2010]{11M06, 11M26}

\date{\today}

\begin{abstract}
In 2016, the first-named author introduced a formulation of the Alternative Hypothesis that assumes that consecutive zeros of the Riemann zeta-function are spaced at multiples of half of the average spacing, but does not assume that the zeros are simple. In this paper, we assume the Riemann Hypothesis and a similar formulation of the Alternative Hypothesis, and for each integer $k$ we obtain constraints on the density of pairs of zeros whose normalized differences are at $k/2$ times the average spacing. These constraints, in turn, restrict the density of (possible) multiple zeros. We also formulate a stronger version of the Alternative Hypothesis and show that it implies the Essential Simplicity Hypothesis.
\end{abstract}

\maketitle


\section{Introduction and Main Results}
\label{sec:1}

The Alternative Hypothesis (AH) is an alternative to the pair correlation conjecture for zeros of the Riemann zeta-function  and arose as a consequence of the (possible) existence of Landau-Siegel zeros. In a 1996 lecture, Heath-Brown \cite{Heath-Brown96} discussed that if there is a sequence of real Dirichlet $L$-functions with Landau-Siegel zeros, then (i) there exist infinitely many extremely long ranges in the critical strip where all the zeros of the Riemann zeta-function are on the critical line, i.e., the Riemann Hypothesis is true in this range, (ii) the zeros are all simple, and (iii) the zeros all occur at spacings that are nearly exactly integer multiples of half the average spacing between zeros. These hypothetical properties of the zeros of the Riemann zeta-function, however, need not depend on the existence of Landau-Siegel zeros, and the spacing hypothesis was later referred to as the \textit{Alternative Hypothesis}.
The value of AH is that it provides a simple distribution of zeros that is completely counter to the experimental evidence supporting the pair correlation conjecture, and yet cannot be disproved at present. 

In this introduction we discuss our main results and their associated formulations of the Alternative Hypothesis. In Section \ref{Sec2} we present a more detailed description of the main results. To begin, let $\rho = \beta +i\gamma, \,\beta, \gamma \in \mathbb{R}$ denote a non-trivial zero of the Riemann zeta-function. For $T\ge 3$, we have
\begin{equation} \label{N(T)} N(T) :=\sum_{0<\gamma \le T}1 = \frac{T}{2\pi}\log \frac{T}{2\pi} - \frac{T}{2\pi} + O(\log T), \end{equation}
where the zeros are counted with multiplicity. We almost always assume the Riemann Hypothesis (RH) in this paper, i.e. we often assume that $\rho = 1/2+i\gamma$. Consider the sequence of ordinates
\[
0 < \gamma_1 < \gamma_2 < \cdots < \gamma_n < \cdots,
\]
where we only consider zeros in the upper half-plane since the zeros are symmetric about the real line. By \eqref{N(T)}, the average spacing between the zero $1/2+i\gamma$ and the next higher zero is $2\pi/\log \gamma$. Therefore, we may denote the normalized ordinate of a zero by
\[ 
\widetilde{\gamma} := \frac{\gamma}{2\pi} \log \gamma, 
\]
and note that the average spacing between consecutive $\widetilde{\gamma}$'s is asymptotic to $1$. Within this framework, the first-named author gave the following formulation of the Alternative Hypothesis in \cite{Bal16}.
\begin{AH} \label{AH}
For each integer $n\ge 1$ there exists an integer $k_n\ge 0$ with 
\begin{equation*} 
\widetilde{\gamma}_{n+1} - \widetilde{\gamma}_{n} = \frac12 k_n + O\big(\,|\gamma_{n+1} - \gamma_n|\cdot\psi(\gamma_n)\,\big),
\end{equation*}
where $\psi(\gamma)$ is a positive function such that $\psi(\gamma) \to \infty$ and $\psi(\gamma) = o(\log \gamma)$ as $\gamma \to \infty$.
\end{AH}

One goal of \cite{Bal16} was to determine consequences of this hypothesis on the pair correlation of zeros. For that purpose, \hyperref[AH]{\textbf{AH}} was reformulated in \cite[Lemma 5.1]{Bal16} for pairs of zeros that are not necessarily consecutive. In this paper, we use the following modified version of the Alternative Hypothesis for differences of zeros. This version of the Alternative Hypothesis is implied by the statement of \hyperref[AH]{\textbf{AH}} given above and is similar to \cite[Lemma 5.1]{Bal16}, which we state as Lemma \ref{SiegLem} in the next section. 
 
\begin{AH2}\label{AH2}
Suppose $M$ is a positive real number that we can take as large as we wish, and let $R(T)$ be a positive decreasing function such that $R(T) \to 0$ as $T\to \infty$. Define
\begin{equation} \label{P(T,M)} \mathcal{P}(T,M) := \left\{ (\gamma,\gamma'): \frac{T}{\log^2T} < \gamma , \gamma'\le T , \quad \left|\frac{\gamma-\gamma'}{2\pi}\log T\right| \le M\right\}. \end{equation}
Then for every $(\gamma,\gamma') \in \mathcal{P}(T,M)$ there is an integer $k$ such that
\begin{equation}\label{AH2k} (\gamma-\gamma')\frac{\log T}{2\pi} =\frac{ k}{2} + O((|k|+1) R(T)). \end{equation}
\end{AH2} 
\noindent We see that $k\ll M$ since \eqref{P(T,M)} and \eqref{AH2k} imply $M \gg k + o(k) \gg k$.

\begin{remark} \label{comment2} The case $k=0$ is of particular interest independent of \hyperref[AH]{\textbf{AH}}. This can occur in two ways. First, it occurs for pairs $(\gamma, \gamma')$ when $\gamma = \gamma'$, which for a simple zero occurs once, and in general $m_\gamma^2$ times for a zero of multiplicity $m_\gamma$. Second, $k=0$ can occur for different distinct zeros which are very close together. 
\end{remark}

\begin{remark} Our results are asymptotic, and therefore would still hold if the statement of \hyperref[AH]{\textbf{AH}} was modified to allow for an exceptional set of zeros that do not satisfy \hyperref[AH]{\textbf{AH}}.
\end{remark}

\begin{remark}
Notice that \hyperref[AH]{\textbf{AH}} and \hyperref[AH2]{\textbf{AH-Pairs}} do not directly depend on RH since they are statements on the imaginary part of the zeros alone.
Without RH, in \Cref{comment2}, $\gamma =\gamma'$ can also occur for zeros on the same horizontal line, and further, for $B_{k/2}$ defined below in \eqref{Bk/2}, $(\gamma,\gamma')\in B_{k/2}$ refers to pairs of zeros in a horizontal strip.
One needs to interpret such unconditional results carefully since $\gamma =\gamma'$ for different zeros does not require the existence of multiple zeros. In stating our results concerning \hyperref[AH2]{\textbf{AH-Pairs}} we will be careful to only assume RH in places where it is needed. 
\end{remark}

We aim to count the number of pairs of zeros in $\mathcal{P}(T,M)$ with normalized difference closest to the half-integer $k/2$. Following \cite{Bal16}, we define $B_{k/2}$ for any fixed number $0<\delta\le1/2$ by \begin{equation} \label{Bk/2}
B_{k/2} = B_{k/2}(T,M,\delta) := \left\{(\gamma,\gamma') \in \mathcal{P}(T,M): \frac{k}2 - \frac{\delta}2 < \frac{\gamma-\gamma'}{2\pi}\log T \le \frac{k}2 + \frac{\delta}2\right\}, \end{equation}
which by \eqref{AH2k} for large enough $T$ is well-defined and partitions $\mathcal{P}(T,M)$ over the half-integers $|k/2|\le M+\delta/2$. We define the \lq\lq density" of pairs closest to $k/2$ by
\begin{equation*} P_{k/2} = P_{k/2}(T) := \left(\frac{T}{2\pi}\log T\right)^{-1}|B_{k/2}|. \end{equation*}
Note that for all $k\in\mathbb{Z}$ we have
\begin{equation*} 
P_{k/2} = P_{-k/2}.
\end{equation*}
Our first result is as follows.

\begin{theorem} \label{thm1}
Assume the Riemann Hypothesis and \hyperref[AH2]{AH-Pairs}. As $T\to\infty$, we have 
\begin{equation} \label{thm1a} 1+o(1) \le P_0 \le \frac32 - \frac{2}{\pi^2} + o(1), \end{equation}
and for $k\in \mathbb{Z}$ and $k\neq 0$ we have 
\begin{equation} \label{thm1b} P_{k/2} \sim \begin{cases}
P_0-\frac12, & \text{if $k\neq 0$ is even,} \\
\frac32-\frac{2}{\pi^2k^2}-P_0, & \text{if $k$ is odd.}
\end{cases}
\end{equation}
\end{theorem}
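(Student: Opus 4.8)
The plan is to extract these constraints from Montgomery's pair correlation formula in combination with the structural rigidity that \hyperref[AH2]{AH-Pairs} imposes on the normalized differences. The starting point is the standard pair-correlation sum $F(\alpha,T) = \big(\tfrac{T}{2\pi}\log T\big)^{-1}\sum_{0<\gamma,\gamma'\le T} T^{i\alpha(\gamma-\gamma')} w(\gamma-\gamma')$ with $w(u) = 4/(4+u^2)$, for which Montgomery's theorem gives $F(\alpha,T) \sim |\alpha| + T^{-2\alpha}\log T$ for $0\le\alpha\le 1$ (and $F(\alpha,T)\ge 0$ always), and the explicit-formula asymptotic $F(\alpha,T)\sim 1$ for $\alpha\ge 1$ only conjecturally — but under \hyperref[AH2]{AH-Pairs} we do not need that range. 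First I would note that under \hyperref[AH2]{AH-Pairs} the only pairs contributing to the left side of Montgomery's formula for $|\gamma-\gamma'|\le M\cdot 2\pi/\log T$ are those with $(\gamma-\gamma')\tfrac{\log T}{2\pi}$ within $o(1)$ of a half-integer, i.e. the pairs counted by the $B_{k/2}$. Hence for suitable test functions $\widehat r$ supported in $[-M,M]$ one gets, on applying Plancherel/Parseval to Montgomery's formula,
\[
\sum_{|k|\le 2M} P_{k/2}\, r\!\left(\tfrac{k}{2}\right) = \int_{-\infty}^{\infty} \widehat r(\alpha)\, F(\alpha,T)\,d\alpha + o(1),
\]
up to the harmless weight $w$; one chooses $r$ so that $\widehat r$ is concentrated on $[-1,1]$ where $F$ is known, which forces $\widehat r$ to be an entire function of exponential type — this is exactly the Beurling–Selberg extremal-function circle of ideas.

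The core of the argument is then to run two or three carefully chosen test functions. Taking the Fejér kernel $\widehat r(\alpha) = \max(1-|\alpha|,0)$, whose Fourier transform is $r(u) = (\sin\pi u/\pi u)^2 \ge 0$ and which vanishes at every nonzero integer, isolates even $k$: since $r(k/2)=0$ for even $k\ne 0$ and $r(0)=1$, the identity becomes $P_0 + \sum_{k \text{ odd}} P_{k/2}\,(\sin(\pi k/2)/(\pi k/2))^2 = \int_0^1 (1-\alpha)(2\alpha + \text{(diagonal)})\,d\alpha + o(1)$; the diagonal term $T^{-2\alpha}\log T$ integrates against $1-\alpha$ to contribute exactly $1$ (it is a delta-like mass at $\alpha=0$ as $T\to\infty$, weighted by $\widehat r(0)=1$), and $\int_0^1 2\alpha(1-\alpha)\,d\alpha = 1/3$, giving a clean linear relation among $P_0$ and the odd $P_{k/2}$. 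A second, complementary test function — one whose transform vanishes at all half-integers except at a single even value $k$, realizable again as a shifted/modulated Fejér-type kernel since the even half-integers are just the integers — pins down each even $P_{k/2}$ in terms of $P_0$, yielding $P_{k/2}\sim P_0 - 1/2$ for even $k\ne 0$. The odd case is obtained by a third test function adapted to the odd half-integers (a Fejér kernel dilated by a factor $2$ and suitably shifted, so that it is supported in $[-M,M]$, nonvanishing only at one odd $k/2$); the value $2/(\pi^2 k^2)$ arises as $\int_0^1$ against the shifted transform, precisely the Fourier coefficient producing $(\sin(\pi k/2)/(\pi k/2))^2 = (2/\pi k)^2$.

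The remaining point is the two-sided bound on $P_0$ in \eqref{thm1a}. The lower bound $P_0 \ge 1 + o(1)$ is the diagonal: the pairs $\gamma=\gamma'$ alone contribute $N(T)\sim \tfrac{T}{2\pi}\log T$ to $|B_0|$, so $P_0\ge 1+o(1)$ with no further input (this is the $k=0$ diagonal of Remark \ref{comment2}). The upper bound $P_0 \le 3/2 - 2/\pi^2 + o(1)$ is the genuinely delicate part and is where the extremal-function optimization enters: one needs a test function $r$ with $\widehat r$ supported in $[-1,1]$, with $r(0)=1$, with $r(k/2)\le 0$ for all half-integers $k/2\ne 0$ (so those terms can be dropped to get an upper bound for $P_0$), and with $\int_{-1}^{1}\widehat r(\alpha)F(\alpha,T)\,d\alpha$ as small as possible given the known $F$. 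Optimizing this linear program — majorizing the relevant combination of $\delta$-masses at half-integers by a band-limited function, à la Selberg — produces the constant $3/2 - 2/\pi^2$. I expect this extremal construction, and the verification that the chosen $\widehat r$ genuinely satisfies the sign conditions at \emph{all} half-integers simultaneously while keeping the integral against $|\alpha|+$(diagonal) minimal, to be the main obstacle; once the right $r$ is in hand, the rest is bookkeeping with Montgomery's formula and the partition of $\mathcal P(T,M)$ into the $B_{k/2}$. Throughout one must also check that the cutoff $\gamma,\gamma' > T/\log^2 T$ in \eqref{P(T,M)} and the weight $w(\gamma-\gamma')$ only affect lower-order terms, which follows from \eqref{N(T)} and the rapid decay of $w$.
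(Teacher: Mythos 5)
Your overall framework---push the pair-correlation sum through a Fourier pair with the transform concentrated on $[-1,1]$, partition the zero-side sum over the $B_{k/2}$, and use \hyperref[AH2]{\textbf{AH-Pairs}} to collapse each block onto the value at $k/2$---is correct and is what the paper does via \hyperref[lemsumpair]{\textbf{MT-Pairs}}, Lemmas~\ref{sumerror} and~\ref{AHsumlemma}. The lower bound $P_0\ge 1+o(1)$ from the diagonal terms is also exactly the paper's argument. But there are two substantive problems.

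First, your choice and description of test functions does not actually achieve what is needed. You correctly observe that the Fej\'er pair $\widehat r(\alpha)=\max(1-|\alpha|,0)$, $r(u)=(\sin\pi u/\pi u)^2$ kills the nonzero even half-integers, but the resulting identity couples $P_0$ to \emph{all} the odd $P_{k/2}$ at once and does not isolate any single $k$. You then wave at a ``shifted/modulated Fej\'er-type kernel'' and a ``Fej\'er kernel dilated by a factor $2$'' for the remaining cases, but dilating the compactly supported side by $2$ pushes the support to $[-2,2]$, out of the range where Montgomery's theorem is known, and a shift/modulation of the Fej\'er pair would destroy the needed zero set at half-integers. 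The paper uses a single, concrete family: $g_n(\alpha)=\sin^2(\tfrac{n\pi}{2}\alpha)$ for $n$ even and $\cos^2(\tfrac{n\pi}{2}\alpha)$ for $n$ odd, both supported in $[-1,1]$, with
\[
\widehat g_n(t)=\frac{\sin(2\pi t)}{2\pi t}\cdot\frac{n^2}{n^2-4t^2},
\]
which vanishes at every half-integer $k/2$ except $k=0$ (value $1$) and $k=\pm n$ (value $(-1)^{n+1}/2$). One family handles both parities, and each $n$ produces exactly the two-term linear relation $P_0+(-1)^{n+1}P_{n/2}$ against the integral $g_n(0)+2\int_0^1\alpha g_n(\alpha)\,d\alpha$; computing the right side gives $1/2$ for even $n$ and $3/2-2/\pi^2n^2$ for odd $n$, which is precisely \eqref{thm1b}. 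You need to produce this (or an equivalent) kernel explicitly; the existence claim as written is not enough and the dilated/shifted variants you suggest do not satisfy the support constraint.

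Second, and more fundamentally, you have misdiagnosed the upper bound as a Beurling--Selberg extremal problem and flagged it as the main obstacle. It is not. Once you have the $n=1$ (odd) case of \eqref{thm1b}, namely $P_0+P_{1/2}\sim \tfrac32-\tfrac{2}{\pi^2}$, the upper bound follows for free: since $P_{1/2}\ge 0$ trivially,
\[
P_0 \le \tfrac32-\tfrac{2}{\pi^2}-P_{1/2}+o(1) \le \tfrac32-\tfrac{2}{\pi^2}+o(1).
\]
There is no linear-programming step, no extremal majorant, and no need for a test function with sign conditions at all half-integers simultaneously. Your proposal as written would send you off on an unnecessary and much harder optimization; the constant $3/2-2/\pi^2$ falls out of the single relation at $n=1$ combined with nonnegativity of a density.
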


\begin{remark} We see on RH and \hyperref[AH2]{\textbf{AH-Pairs}} that $\limsup_{T\to \infty} P_0 \le 3/2 - 2/\pi^2 = 1.29735\ldots$ while on RH alone the best result known \cite{CGL2020} is that $\limsup_{T\to \infty}P_0\le 1.3208$.
\end{remark}

\Cref{thm1} implies that if one of the ``limiting densities" 
\begin{equation} \label{pk/2}
p_{k/2} := \lim_{T\to\infty} P_{k/2}(T)
\end{equation}
exists, then all of them exist. In other words, one limiting density $p_{k_0/2}$ for a specific $k_0$ determines the values of $p_{k/2}$ for all $k\in\mathbb{Z}$. We state this precisely as Corollary \ref{cor1} in the next section.

We expect that $p_0=1$, which under RH, is a reformulation of the following conjecture, see \cite{Mue83}.
\begin{ESH}\label{ESH}
Almost all the zeros of the Riemann zeta-function are simple. Moreover, almost all of the distinct zeros are not spaced arbitrarily closer together than the average spacing. 
\end{ESH}
Without RH, the term {\it spacing} in the statement of \hyperref[ESH]{\textbf{ESH}} refers to the {\it vertical difference} between two zeros, which may be far apart from one another horizontally. Therefore, without RH, the statement of \hyperref[ESH]{\textbf{ESH}}  immediately implies $p_0=1$, however the converse is not necessarily true. Our second main result addresses this and shows that RH and a stronger form of \hyperref[AH2]{\textbf{AH-Pairs}} imply that $p_0=1$, which in turn implies \hyperref[ESH]{\textbf{ESH}} . The required stronger form of \hyperref[AH2]{\textbf{AH-Pairs}} is as follows.
\begin{AH2Strong} \label{AH2Strong}
Suppose $\mathcal{M}$ is a positive real number which we can take as large as we wish, and let $R(T)$ be a positive decreasing function such that $ R(T)\log T \to 0$ as $T\to \infty$. Define
\begin{equation*} 
\mathcal{Q}(T,\mathcal{M}) := \left\{ (\gamma,\gamma'): \frac{T}{\log^2T} < \gamma , \gamma'\le T , \quad \left|\gamma-\gamma'\right| \le\mathcal{M}\right\}. \end{equation*}
Then for every $(\gamma,\gamma') \in \mathcal{Q}(T,\mathcal{M})$ there is an integer $k\ll \mathcal{M}\log T$ such that
\begin{equation*} 
(\gamma-\gamma')\frac{\log T}{2\pi} =\frac{ k}{2} + O((|k|+1) R(T)). \end{equation*}
\end{AH2Strong} 
We extend the definition of $B_{k/2}$ in \eqref{Bk/2} to pairs $(\gamma,\gamma') \in \mathcal{Q}(T,\mathcal{M})$ with the same conditions, and partition $\mathcal{Q}(T,\mathcal{M})$ over half-integers $|k/2|\ll \mathcal{M}\log T$.
\begin{theorem} \label{thm3-simple_zeros}
Assume that the Riemann Hypothesis and the \hyperref[AH2Strong]{Strong AH-Pairs} hold. Then  
\begin{equation*} 
p_0 = \lim_{T\to \infty} P_0 = 1, \end{equation*}
and thus the Essential Simplicity Hypothesis is true.
\end{theorem}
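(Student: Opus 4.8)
The plan is to run the pair version of Montgomery's theorem with one very specific test function: the reproducing kernel $g(v)=\frac{\sin 2\pi v}{2\pi v}$, whose Fourier transform is $\widehat g=\tfrac12\1_{[-1,1]}$, supported in $[-1,1]$, and which has the crucial feature $g(0)=1$ and $g(k/2)=0$ for every integer $k\neq0$. So on the \hyperref[AH2Strong]{Strong AH-Pairs} side this $g$ will "see" only the bin $B_0$, while on the Montgomery side it produces a clean constant. The reason \hyperref[AH2Strong]{Strong AH-Pairs} is needed rather than \hyperref[AH2]{AH-Pairs} is precisely that $g$ decays only like $|v|^{-1}$: one must control the normalized differences $(\gamma-\gamma')\tfrac{\log T}{2\pi}$ out to size $\asymp\log T$, with an error that beats $1/\log T$, and these are exactly the two strengthenings in the hypothesis (the fixed, non-shrinking window $|\gamma-\gamma'|\le\mathcal M$, and $R(T)\log T\to0$).

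First I would evaluate $S:=\frac{2\pi}{T\log T}\sum_{0<\gamma,\gamma'\le T}g\big((\gamma-\gamma')\tfrac{\log T}{2\pi}\big)\,w(\gamma-\gamma')$, with $w(u)=4/(4+u^2)$, by Montgomery's theorem (assuming RH):
\[
S=\int_{-1}^1 F(\alpha,T)\widehat g(\alpha)\,d\alpha=\tfrac12\int_{-1}^1\Bigl((1+o(1))T^{-2|\alpha|}\log T+|\alpha|\Bigr)\,d\alpha+o(1),
\]
and since $\int_{-1}^1 T^{-2|\alpha|}\log T\,d\alpha=1-T^{-2}\to1$ and $\int_{-1}^1|\alpha|\,d\alpha=1$, this gives $S\to 1$.

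Next I would evaluate the same $S$ using the hypothesis. Pairs with $\gamma$ or $\gamma'$ below $T/\log^2T$ contribute $\ll N(T/\log^2T)\log T=o(T\log T)$; pairs with $|\gamma-\gamma'|>\mathcal M$ satisfy $|g(v)|\,w(\gamma-\gamma')\ll(\log T)^{-1}|\gamma-\gamma'|^{-3}$, hence contribute $O(1/\mathcal M^2)$ after normalization; the remaining pairs lie in $\mathcal Q(T,\mathcal M)$ and are sorted by \hyperref[AH2Strong]{Strong AH-Pairs} into bins $B_{k/2}$ with $|k|\ll\mathcal M\log T$, on which $v=\tfrac k2+O((|k|+1)R(T))$. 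On $B_0$ one has $v,\gamma-\gamma'=o(1)$, so $g(v)w(\gamma-\gamma')=1+o(1)$ and this bin contributes $(1+o(1))|B_0|$; on each bin with $k\neq0$ the vanishing $g(k/2)=0$ together with $|g'(\xi)|\ll|k|^{-1}$ for $\xi$ near $k/2$ gives $|g(v)|\ll R(T)$ uniformly, so all the $k\neq0$ bins together contribute at most $R(T)\sum_{\gamma\neq\gamma'}w\ll R(T)\,T\log^2T=o(T\log T)$ — this last step being exactly where $R(T)\log T\to0$ is used. Hence $S=P_0(T)+O(1/\mathcal M^2)+o_T(1)$. Comparing with the previous paragraph, $P_0(T)=1+O(1/\mathcal M^2)+o_T(1)$; letting $T\to\infty$ and then $\mathcal M\to\infty$ yields $p_0=\lim_{T\to\infty}P_0(T)=1$. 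The Essential Simplicity Hypothesis then follows from the diagonal: $|B_0|\ge\sum_{T/\log^2T<\gamma\le T}m_\gamma^2=N(T)(1+o(1))+\sum_{T/\log^2T<\gamma\le T}m_\gamma(m_\gamma-1)$ plus the nonnegative number of pairs of \emph{distinct} zeros with $|\gamma-\gamma'|<\pi\delta/\log T$; since $|B_0|\sim N(T)$, both the multiplicity defect $\sum m_\gamma(m_\gamma-1)$ and that close-pair count are $o(N(T))$, which is \hyperref[ESH]{ESH}.

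The delicate point is the error analysis in the third paragraph: replacing $v$ by $k/2$ must stay harmless when the number of bins grows like $\log T$, and this works only because each $k\neq0$ bin picks up a factor $R(T)$ (the vanishing $g(k/2)=0$ and the bound $|g'|\ll|k|^{-1}$ cancel the $(|k|+1)$ in the hypothesis's error term), after which $R(T)$ times the total mass $\sum w\asymp T\log^2T$ is still $o(T\log T)$; with only $R(T)\to0$, or with the shrinking window of \hyperref[AH2]{AH-Pairs}, this breaks down, and a slowly decaying $g$ would also leave an uncontrolled tail in the range $M<|v|\ll\log T$. One should also confirm that Montgomery's asymptotic for $F(\alpha,T)$ is uniform enough up to $\alpha=1$ for the integral in the second paragraph to converge to $1$; if the endpoint is problematic one can instead take $\widehat g$ supported in $[-1+\eta,1-\eta]$, pay the price that $g(k/2)$ no longer vanishes for $k\neq0$, re-sum the $B_{k/2}$ contributions against $g(k/2)$ using the linear relations \eqref{thm1b} of Theorem~\ref{thm1}, and then let $\eta\to0$.
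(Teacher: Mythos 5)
Your proposal is correct and is essentially the paper's own proof: both start from Montgomery's Corollary 1 with $\beta=1$ (i.e.\ the kernel $g(v)=\sin(2\pi v)/(2\pi v)$, whose transform is $\tfrac12\1_{[-1,1]}$), truncate to pairs in $\mathcal{Q}(T,\mathcal{M})$ with error $O(T\log T/\mathcal{M}^2)$, extract $(1+O(R(T)^2))P_0\,\tfrac{T}{2\pi}\log T$ from the $B_0$ bin, and control the $k\neq0$ bins by $\ll R(T)\,T\log^2T$ using the vanishing of $g$ at nonzero half-integers together with $R(T)\log T\to 0$. The differences are cosmetic (you re-derive the $S\to1$ evaluation from the integral form of MT rather than citing \eqref{Msum1} directly, and you spell out the deduction of ESH from $p_0=1$ under RH, which the paper leaves implicit); the alternative $[-1+\eta,1-\eta]$ route you sketch at the end is unnecessary, since the paper's MT is stated uniformly on $0\le\alpha\le1$.
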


Our results are based on Montgomery's work on pair correlation of zeros of the Riemann zeta-function. Montgomery \cite{Montgomery73} assumed RH, and defined, for real $\alpha$,
\begin{equation} \label{Falpha}
F(\alpha) := \left(\frac{T}{2\pi}\log T\right)^{-1} \sum_{0<\gamma,\gamma'\le T} T^{i\alpha(\gamma-\gamma')} w(\gamma-\gamma'), \quad\text{ where }~ w(u) = \frac{4}{4+u^2}.
\end{equation}
Here the sum is over the imaginary parts of pairs of zeros $\rho = 1/2+i\gamma$ and $\rho'=1/2 +i\gamma'$ in the critical strip.
Montgomery  evaluated $F(\alpha)$ for $|\alpha|\le 1$ and conjectured its asymptotic behavior for $|\alpha|>1$. We now state Montgomery's theorem which incorporates some improvements from \cite{GM87}, see also \cite[Montgomery Theorem (MT)]{BGST-CL}.

\begin{MT} \label{MonThm} Assume the Riemann Hypothesis. The function $F(\alpha)$ is real, even, and nonnegative. Moreover, as
$T \to \infty$, we have
\begin{equation}\label{MTeq} F(\alpha) =  T^{-2\alpha}\log T \left(1+ O\left(\frac{1}{\sqrt{\log T}}\right) \right) +
\alpha+ O\left(\frac{1}{\sqrt{\log T}}\right)
\end{equation}
uniformly for $0\le \alpha \le 1$.
\end{MT}
\begin{remark}
We have removed an extraneous factor of $\log\log T$ from \cite{GM87} which can be avoided using Lemma 6 there in place of Lemma 7. The statement there has also been corrected slightly. See also \cite{Gold81} and \cite{LPZ17}. Montgomery and Vaughan in the forthcoming book Multiplicative Number Theory III have obtained a significantly refined version of \hyperref[MonThm]{\textbf{MT}}. Of particular interest, when $\alpha=1$, they prove that the error term $O(1/\sqrt{\log T})$ above can be replaced with $O(\log\log T /\log T)$.
\end{remark}
\begin{remark}
In our previous paper \cite{BGST-PC}, we proved that \hyperref[MonThm]{\textbf{MT}} holds unconditionally without assuming RH for a generalization of $F(\alpha)$ defined by 
\begin{equation*} 
F(\alpha) := \left(\frac{T}{2\pi}\log T\right)^{-1} \sum_{\substack{\rho, \rho' \\ 0<\gamma,\gamma' \le T}} T^{\alpha(\rho-\rho')}W(\rho-\rho'), \qquad \text{where} \qquad W(u) := \frac{4}{4 - u^2}.
\end{equation*}
The result remains the same even if we modify the summation to be over $T<\gamma,\gamma' \le 2T$, see \cite{BGST-CL}.
\end{remark}We note that \hyperref[AH2]{\textbf{AH-Pairs}} is consistent with \hyperref[MonThm]{\textbf{MT}}, however it determines a completely different behavior of $F(\alpha)$ when $|\alpha|>1$, as shown below in Figure \ref{fig-AH-triangles}.

\begin{figure}[H]
\centering
\includegraphics[width=0.65\textwidth]{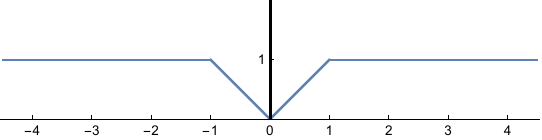}
\vskip0.5in
\includegraphics[width=0.65\textwidth]{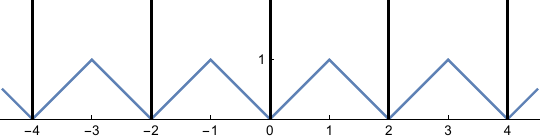}
\vskip0.5in
\includegraphics[width=0.65\textwidth]{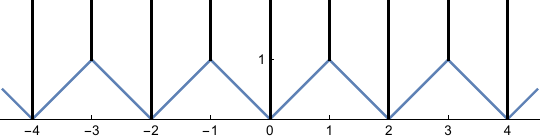}
\vskip0.25in
\caption{Three plots of $F(\alpha)$ under different assumptions. The top plot assumes the GUE model, the middle plot assumes AH with $p_0=1$, and the bottom plot assumes AH with $P_0>1$ and that if $p_0$ exists then $p_0 >1$.}
\label{fig-AH-triangles}
\end{figure}

In recent work, Lagarias and Rodgers \cite{LR20} proved that \hyperref[AH]{\textbf{AH}} is compatible not only with Montgomery's unconditional results on $F(\alpha)$ for $|\alpha|\le 1$, but for all band-limited higher correlations. Their work uses the theory of point processes, and they work under the assumption that almost all of the points of the point process are distinct. This assumption corresponds to assuming $p_0 =1$ and the middle plot of $F(\alpha)$ given in Figure \ref{fig-AH-triangles}.

If $p_0>1$ then $F(\alpha)$ behaves as depicted in the bottom plot in Figure \ref{fig-AH-triangles}, which is obtained by modifying the middle plot to include delta functions with mass $2(P_0 -1)$ at the odd integers. The precise behavior of $F(\alpha)$ in this setting is stated in the next section as Theorem \ref{thm4-r(k)p_k}.

We now formulate the following conjecture which, upon assuming the limiting densities $p_{k/2}$ in \eqref{pk/2} exist, is a consequence of RH and \hyperref[AH2]{\textbf{AH-Pairs}}.

\begin{AH3} \label{AH-density}
The limiting densities $p_{k/2}$ exist and satisfy
\begin{equation*} 
1 \le p_0 \le \frac32 - \frac{2}{\pi^2}, \end{equation*}
and for $k\in \mathbb{Z}$ 
\begin{equation*}
p_{k/2} = \begin{cases}
p_0-\frac12, & \text{if $k\neq 0$ is even}, \\
\frac32-\frac{2}{\pi^2k^2}-p_0, & \text{if $k$ is odd}.
\end{cases} \end{equation*}
\end{AH3}

As we see in Corollary \ref{cor1} below, if  $p_0$ exists then all of the densities $p_{k/2}$ exist. \hyperref[AH-density]{\textbf{AH-Density}} thus contains nearly the same information provided by \hyperref[MonThm]{\textbf{MT}} and \hyperref[AH2]{\textbf{AH-Pairs}}. As an example of this, in \Cref{thm2-generalizedMTwith_p_k} we prove a version of \hyperref[MonThm]{\textbf{MT}}  from \hyperref[AH-density]{\textbf{AH-Density}}.

\section{Detailed Description of the Main Results}\label{Sec2}

We begin with the following lemma from \cite[Lemma 5.1]{Bal16} which is a reformulation of \hyperref[AH]{\textbf{AH}} for pairs of zeros which may not be consecutive.
\begin{lem}\label{SiegLem}
Suppose $M$ is a positive real number which we can take as large as we wish, and let $\mathcal{P}(T,M)$ be as defined in \eqref{P(T,M)}.
Suppose the Alternative Hypothesis is true. Let $\Psi(T)$ be a function with $\Psi(T)\to \infty$ as $T\to \infty$ and $\Psi(T)= o(\log T)$. Then for every $(\gamma,\gamma') \in \mathcal{P}(T,M)$ there is an integer $k$ such that
\[ (\gamma-\gamma')\frac{\log T}{2\pi} = \frac{k}{2} + O\left(|\gamma-\gamma'|\left(\Psi(T) + \frac{M\log T}{T} +\log \log T\right)\right).\]
Here we may choose the integer $k$ so that it satisfies
\begin{equation} \label{k<<M}
k\ll M.
\end{equation}
\end{lem}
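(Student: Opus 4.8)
The plan is to reduce the assertion about an arbitrary pair to a telescoping sum of consecutive gaps, to which \hyperref[AH]{\textbf{AH}} applies directly, and then to reconcile the \emph{local} normalization $\log\gamma$ built into $\widetilde{\gamma}$ with the \emph{global} normalization $\log T$ appearing in the conclusion. By replacing $k$ with $-k$ we may assume $\gamma\ge\gamma'$, and if $\gamma=\gamma'$ we take $k=0$, so suppose $\gamma>\gamma'$. Since $T/\log^2 T<\gamma',\gamma\le T$, we can write $\gamma=\gamma_n$ and $\gamma'=\gamma_m$ with $m<n$; then all of $\gamma_m<\gamma_{m+1}<\cdots<\gamma_n$ lie in $(T/\log^2 T,\,T]$, so $\log\gamma_j=\log T+O(\log\log T)$ for $m\le j\le n$, and $\sum_{j=m}^{n-1}(\gamma_{j+1}-\gamma_j)=\gamma-\gamma'=:h$.

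First I would apply \hyperref[AH]{\textbf{AH}} to each consecutive gap, $\widetilde{\gamma}_{j+1}-\widetilde{\gamma}_j=\tfrac12 k_j+O\big(|\gamma_{j+1}-\gamma_j|\,\psi(\gamma_j)\big)$, and sum over $m\le j\le n-1$, obtaining $\widetilde{\gamma}_n-\widetilde{\gamma}_m=\tfrac12 k+O\big(\sum_j|\gamma_{j+1}-\gamma_j|\,\psi(\gamma_j)\big)$ with $k:=\sum_{j=m}^{n-1}k_j$ a nonnegative integer. Since $\psi(\gamma)=o(\log\gamma)$ and $\psi\to\infty$, the quantity $\sup_{T/\log^2 T<t\le T}\psi(t)$ is itself $o(\log T)$ and tends to infinity, so we may assume the function $\Psi(T)$ dominates it; then $\psi(\gamma_j)\ll\Psi(T)$ throughout the relevant range, and the telescoping identity collapses the error to $O\big(h\,\Psi(T)\big)$.

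The crux is to compare $\widetilde{\gamma}_n-\widetilde{\gamma}_m$ with $\tfrac{h\log T}{2\pi}$. Writing $\gamma_n=\gamma_m+h$ gives
\[
2\pi\big(\widetilde{\gamma}_n-\widetilde{\gamma}_m\big)=\gamma_n\log\gamma_n-\gamma_m\log\gamma_m=h\log\gamma_n+\gamma_m\log\!\Big(1+\frac{h}{\gamma_m}\Big).
\]
Here $h\le 2\pi M/\log T$ and $\gamma_m>T/\log^2 T$ force $h/\gamma_m\ll M\log T/T=o(1)$, so $\gamma_m\log(1+h/\gamma_m)=h+O(h^2/\gamma_m)=h+O\big(h\cdot M\log T/T\big)$, while $h\log\gamma_n=h\log T+O(h\log\log T)$. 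Hence $\widetilde{\gamma}_n-\widetilde{\gamma}_m=\tfrac{h\log T}{2\pi}+O\big(h(\log\log T+M\log T/T)\big)$. Equating this with the expression from the previous paragraph yields exactly the claimed estimate; and since $\Psi(T)$, $\log\log T$, and $M\log T/T$ are all $o(\log T)$, the total error is $o(h\log T)$, so $\tfrac{k}{2}=\tfrac{h\log T}{2\pi}(1+o(1))$, which combined with $\tfrac{h\log T}{2\pi}\le M$ gives $0\le k\ll M$.

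The one genuine obstacle is the normalization comparison: a naive estimate of $\gamma_n\log\gamma_n-\gamma_m\log\gamma_m-h\log T$ only shows it is $O(T\log\log T)$, which is useless, and one must isolate the factor $h=\gamma_n-\gamma_m$ \emph{before} bounding in order to see that the true discrepancy is merely $O(h\log\log T)$ (plus the genuinely negligible $O(h\cdot M\log T/T)$ from the curvature of $\log$). Everything else — the telescoping and the replacement of the $\gamma$-dependent $\psi$ of \hyperref[AH]{\textbf{AH}} by a $T$-dependent majorant $\Psi$ that is uniformly valid on $(T/\log^2 T,\,T]$ — is routine bookkeeping.
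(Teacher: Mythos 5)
Your proof is correct and complete. Note, however, that the paper itself does not reprove the main estimate of Lemma~\ref{SiegLem}: the body of the lemma is cited verbatim from \cite[Lemma 5.1]{Bal16}, and the only new content the paper supplies is the added assertion \eqref{k<<M}, for which it gives a short two-case argument (splitting on whether $\left|\tfrac{\gamma-\gamma'}{2\pi}\log T\right|$ is $\le T$ or $\ge T$). Your telescoping argument from \hyperref[AH]{\textbf{AH}}, together with the normalization comparison $\gamma_n\log\gamma_n - \gamma_m\log\gamma_m = h\log\gamma_n + \gamma_m\log\!\bigl(1+h/\gamma_m\bigr)$, which isolates the factor $h=\gamma_n-\gamma_m$ \emph{before} estimating so that the discrepancy between the local $\log\gamma_n$ and global $\log T$ contributes only $O(h\log\log T)$, is presumably the content of the cited lemma; you have reconstructed it, and you correctly account for all three error terms ($\Psi(T)$ from $\psi$, $\log\log T$ from the local-to-global change in normalization over $(T/\log^2 T,\,T]$, and $M\log T/T$ from the curvature of $t\mapsto t\log t$). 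Your route also delivers \eqref{k<<M} as an immediate byproduct, since $\tfrac{k}{2} = (1+o(1))\tfrac{(\gamma-\gamma')\log T}{2\pi}$ and the membership $(\gamma,\gamma')\in\mathcal{P}(T,M)$ already bounds $\left|\tfrac{(\gamma-\gamma')\log T}{2\pi}\right|\le M$; this is cleaner than the paper's case split, whose ``$\ge T$'' branch is vacuous in the typical regime $M\le T$ and only matters if one reads $\mathcal{P}(T,M)$ without the difference constraint. One small point worth making explicit: the clause ``let $\Psi(T)$ be a function with $\Psi\to\infty$, $\Psi=o(\log T)$'' must be read existentially (there is such a $\Psi$, depending on the $\psi$ in \hyperref[AH]{\textbf{AH}}), exactly as you interpreted by taking $\Psi$ to dominate $\sup_{T/\log^2T<t\le T}\psi(t)$; the statement cannot hold for arbitrary such $\Psi$, since one could otherwise take $\Psi$ growing more slowly than $\psi$.
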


Except for \eqref{k<<M}, this is \cite[Lemma 5.1]{Bal16}. 
To see that \eqref{k<<M} holds, observe that for $\gamma,\gamma'\in\mathcal{P}(T,M)$,
if
$$
\left| \frac{\gamma-\gamma'}{2\pi} \log T\right| \le T,
$$
then since $\Psi(T)=o(\log T)$ and $|\gamma-\gamma'|\log T\ll M$ we have
\begin{align*}
|\gamma-\gamma'|\left( \Psi(T) +\frac{M\log T}{T}+\log\log T\right) 
&\ll |\gamma-\gamma'| \left( \log T +\frac{M\log T}{T}\right) \\
&= |\gamma-\gamma'|\log T + ( |\gamma-\gamma'|\log T ) \frac{M}{T} \\
&\ll M + T\cdot\frac{M}{T} \ll M.
\end{align*}
Thus any $k$ satisfying
$$
(\gamma-\gamma')\log T = \pi k + O\left( |\gamma-\gamma'|\left( \Psi(T)+\frac{M\log T}{T}+\log\log T\right) \right)
$$
must also satisfy $k\ll M$. On the other hand, if
$$
\left| \frac{\gamma-\gamma'}{2\pi} \log T\right| \geq T,
$$
then \textit{any} choice of an integer $k\ll M$ must also satisfy
$$
(\gamma-\gamma')\log T =\pi k + O\left( |\gamma-\gamma'|\left( \Psi(T) +\frac{M\log T}{T}+\log\log T\right) \right)
$$
because in this case
\[
|\gamma-\gamma'|\left( \Psi(T) +\frac{M\log T}{T}+\log\log T\right) 
\geq |\gamma-\gamma'| \frac{M\log T}{T}
\geq (2\pi T) \frac{M}{T} \gg M.
\]

Lemma \ref{SiegLem} is the basis of our formulation of \hyperref[AH2]{\textbf{AH-Pairs}}.
Note that by Lemma \ref{SiegLem}, \hyperref[AH]{\textbf{AH}} implies 
\[ R(T) = \frac{\Psi(T)}{\log T} + \frac{M}{T} +\frac{\log\log T}{\log T} . \] 
Since $\Psi(T)=o(\log T)$, without further information on $\Psi(T)$ this matches the requirement in \hyperref[AH2]{\textbf{AH-Pairs}} that $R(T)\to 0$ as $T\to \infty$.
In addition, unlike Lemma \ref{SiegLem}, \hyperref[AH2]{\textbf{AH-Pairs}} treats the case when $k=0$ the same as the cases when $k\neq 0$.

In proving our results we do not directly use \hyperref[MonThm]{\textbf{MT}}, but rather the following nearly immediate consequence of \hyperref[MonThm]{\textbf{MT}}. Recall that a function $f(x)$ is Lipschitz continuous at a point $x=a$ if there are constants $C>0$ and $\delta >0$ such that $|f(x)-f(a)| \le C |x-a|$ for all $x$ in a neighborhood $|x-a|< \delta$ of $a$. We can also define left or right Lipschitz continuous at $x=a$ in neighborhoods $a-\delta < x \le a$ or $a\le x < a+\delta$ respectively.
\begin{MTSP}
\label{lemsumpair} Let $g(\alpha)\in L^{\!1}(\mathbb R)$, and define the Fourier transform $\widehat{g}(t)$ of $g(\alpha)$ by
\begin{equation} \label{hat-g}
\widehat{g}(t) = \int_{-\infty}^\infty g(\alpha)e(-t\alpha)\,d\alpha, \qquad \text{where} \qquad e(\theta) = e^{2\pi i \theta}.
\end{equation}
Assuming the Riemann Hypothesis, we have
\begin{equation} \label{Sumpair}
\sum_{0<\gamma,\gamma'\le T }\widehat{g}\Big(\frac{\gamma-\gamma'}{2\pi}\log T\Big)w(\gamma-\gamma')
= \left(\frac{T}{2\pi}\log T \right) \int_{-\infty}^\infty F(\alpha)g(\alpha)\,d\alpha .
\end{equation}
If we assume in addition that $g(\alpha)$ is even with support in $|\alpha|\le 1$, and $g(\alpha)$ is Lipschitz continuous at $\alpha =0$, then 
\begin{equation} \label{g-transform} \widehat{g}(t) = 2\operatorname{Re} \int_0^1 g(\alpha) e(t\alpha)\, d\alpha , \end{equation}
and
\begin{equation} \label{pairsum}
\sum_{0<\gamma,\gamma'\le T }\widehat{g}\Big(\frac{\gamma-\gamma'}{2\pi}\log T\Big)w(\gamma-\gamma')
= \frac{T}{2\pi}\log T \left( 
g(0) + 2 \int_0^1\alpha g(\alpha)\, d\alpha +O\left(\frac{1}{\sqrt{\log T}}\right) \right).
\end{equation}
\end{MTSP}

\begin{remark}
While \eqref{pairsum} is only true if we assume RH, we can evaluate the sum over pairs of zeros on the left-hand side of \eqref{pairsum} by using \hyperref[AH2]{\textbf{AH-Pairs}} without assuming RH, but then we cannot conclude the result agrees with the right-hand side of \eqref{pairsum} without assuming RH.
\end{remark}

Turning now to the evaluation of the left-hand side of \eqref{pairsum} using \hyperref[AH2]{\textbf{AH-Pairs}}, by \eqref{N(T)} the average spacing between consecutive zeros of height $\le T$ is $2\pi/\log T$, and thus the \hyperref[AH2]{\textbf{AH-Pairs}} conjecture implies all the pairs of zeros in $\mathcal{P}(T,M)$ are extremely close to multiples of half of the average spacing.
\Cref{thm1} is obtained by making use of \hyperref[MonThm]{\textbf{MT}} in the form of \hyperref[lemsumpair]{\textbf{MT-Pairs}}.
As a corollary to \Cref{thm1}, \hyperref[AH2]{\textbf{AH-Pairs}} allows us to determine the limiting behavior of the density functions.
Recall the ``limiting densities"
\begin{equation*}
p_{k/2} = \lim_{T\to\infty} P_{k/2}(T)
\end{equation*}
as defined in 
\eqref{pk/2}.

\begin{corollary} \label{cor1}
Assume the Riemann Hypothesis and \hyperref[AH2]{AH-Pairs}. In addition, assume that the limiting density $p_0$ exists. Then
\begin{equation*} 
1 \le p_0 \le \frac32 - \frac{2}{\pi^2}, \end{equation*}
and for $k\in \mathbb{Z}$, the limiting densities $p_{k/2}$ all exist, and
\begin{equation*}
p_{k/2} = \begin{cases}
p_0-\frac12, & \text{if $k\neq 0$ is even}, \\
\frac32-\frac{2}{\pi^2k^2}-p_0, & \text{if $k$ is odd}.
\end{cases} \end{equation*}
The assumption that $p_0$ exists may be replaced with the assumption that $p_{k/2}$ exists for any one value of $k$ and that will then imply all the densities exist for all $k$.
\end{corollary}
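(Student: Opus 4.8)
The plan is to deduce everything from Theorem \ref{thm1} by taking the limit $T\to\infty$ along a subsequence and then bootstrapping from the existence of a single limiting density. First I would observe that \eqref{thm1a} of Theorem \ref{thm1} gives $1+o(1)\le P_0(T)\le \tfrac32-\tfrac{2}{\pi^2}+o(1)$, so the moment we assume $p_0=\lim_{T\to\infty}P_0(T)$ exists, passing to the limit immediately yields the two-sided bound $1\le p_0\le \tfrac32-\tfrac{2}{\pi^2}$. Next, for each fixed nonzero $k\in\mathbb Z$, the asymptotic \eqref{thm1b} states that $P_{k/2}(T)$ differs from an explicit continuous function of $P_0(T)$ by $o(1)$: namely $P_{k/2}(T)=P_0(T)-\tfrac12+o(1)$ when $k$ is even and $P_{k/2}(T)=\tfrac32-\tfrac{2}{\pi^2k^2}-P_0(T)+o(1)$ when $k$ is odd. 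Since $P_0(T)\to p_0$, the right-hand sides converge, hence $P_{k/2}(T)$ converges, and the limit $p_{k/2}$ is given by the claimed formula. This handles the main assertion of the corollary.

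For the final sentence, suppose instead that $p_{k_0/2}=\lim_{T\to\infty}P_{k_0/2}(T)$ exists for some fixed $k_0$. If $k_0=0$ we are already done. If $k_0\ne0$ is even, then \eqref{thm1b} gives $P_0(T)=P_{k_0/2}(T)+\tfrac12+o(1)$, so $P_0(T)$ converges and $p_0$ exists; if $k_0$ is odd, then $P_0(T)=\tfrac32-\tfrac{2}{\pi^2k_0^2}-P_{k_0/2}(T)+o(1)$, which again forces $P_0(T)$ to converge. In either case the existence of $p_{k_0/2}$ implies the existence of $p_0$, and then the first part of the argument supplies the existence of all the $p_{k/2}$ together with their values. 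One should note here that the relation \eqref{thm1b} is an honest asymptotic equivalence for each \emph{fixed} $k$, so solving it for $P_0(T)$ introduces no difficulty; the $o(1)$ error simply carries through.

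I do not expect any genuine obstacle: the corollary is a formal consequence of Theorem \ref{thm1}, and the only point requiring a line of care is that \eqref{thm1b} is stated with $\sim$ (ratio asymptotic) rather than with an additive $o(1)$, so one must first rewrite it in additive form. This is legitimate because each $P_{k/2}(T)$ and $P_0(T)$ is bounded (by Theorem \ref{thm1} and the trivial bound $P_{k/2}\ge0$), so a multiplicative $1+o(1)$ on a bounded quantity is an additive $o(1)$; for the even case where the limiting value $p_0-\tfrac12$ could in principle be $0$, one instead argues directly from the underlying estimate behind \eqref{thm1b} (or simply notes that $P_{k/2}(T)-(P_0(T)-\tfrac12)=o(1)$ is what the proof of Theorem \ref{thm1} actually establishes). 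With that cosmetic adjustment in place, the limit arguments above go through verbatim.
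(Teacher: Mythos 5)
Your proposal is correct and follows essentially the same route as the paper: the bounds on $p_0$ come straight from \eqref{thm1a}, and passing to the limit in \eqref{thm1b} transfers existence and the explicit formulas between $p_0$ and the other $p_{k/2}$, in both directions. The extra care you take in replacing the ratio asymptotic $\sim$ by an additive $o(1)$ is a reasonable clarification, and it is indeed what the underlying estimate $P_0 + (-1)^{n+1}P_{n/2} \sim c_n$ with $c_n \neq 0$ in the proof of Theorem \ref{thm1} actually gives.
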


This corollary follows immediately from \Cref{thm1} since the bounds on $p_0$ follow from \eqref{thm1a}, and by taking limits in \eqref{thm1b} the existence of $p_0$ implies the existence of each $p_{k/2}$, while the existence on one $p_{k/2}$ implies the existence of $p_0$ and thus the remaining $p_{k/2}$.
We also give two further corollaries to \Cref{thm1} as the two extreme cases of \hyperref[AH2]{\textbf{AH-Pairs}}.

\begin{corollary} \label{cor2} Assuming the Riemann Hypothesis, \hyperref[AH2]{AH-Pairs}, and $p_0 =1$, we have 
\begin{equation*} p_{k/2} = \begin{cases}
1, & \text{if $k=0$}, \\
\frac12, & \text{if $k\neq 0$ is even}, \\
\frac12-\frac{2}{\pi^2k^2}, & \text{if $k$ is odd}.
\end{cases} \end{equation*}
\end{corollary}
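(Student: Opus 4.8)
The plan is to obtain \Cref{cor2} as an immediate specialization of \Cref{cor1} (equivalently, of \Cref{thm1}) to the value $p_0 = 1$. First I would check that the hypothesis $p_0 = 1$ is consistent with the conclusion of \Cref{thm1}: indeed $1$ lies in the admissible interval $[1,\,\frac32 - \frac{2}{\pi^2}]$, since $\frac32 - \frac{2}{\pi^2} = 1.297\ldots > 1$. In particular, the hypotheses of \Cref{cor1} are all satisfied, because assuming $p_0 = 1$ presupposes in particular that the limiting density $p_0$ exists.

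Next I would invoke \Cref{cor1}: since $p_0$ exists, every limiting density $p_{k/2}$ exists and
\[
p_{k/2} = \begin{cases} p_0 - \frac12, & k \neq 0 \text{ even},\\[2pt] \frac32 - \frac{2}{\pi^2 k^2} - p_0, & k \text{ odd}. \end{cases}
\]
Substituting $p_0 = 1$ yields $p_{k/2} = \frac12$ for $k \neq 0$ even and $p_{k/2} = \frac12 - \frac{2}{\pi^2 k^2}$ for $k$ odd, while the case $k = 0$ simply records the hypothesis $p_0 = 1$ itself; this is precisely the claimed piecewise formula. (Alternatively, one could bypass \Cref{cor1} and argue directly from \Cref{thm1}: since $P_0 \to p_0 = 1$, letting $T \to \infty$ in \eqref{thm1b} shows the right-hand sides converge, hence so do the $P_{k/2}$, to the stated limits, the $o(1)$ error terms being absorbed.)

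I do not expect any genuine obstacle here beyond bookkeeping. The only point warranting a moment's care is the logical dependency: the existence of the single limit $p_0$, guaranteed by hypothesis, is exactly what licenses the ``existence-propagation'' half of \Cref{cor1} and thereby makes it legitimate to pass to the limit in the asymptotic relations \eqref{thm1b}. Once that is in place, \Cref{cor2} is pure substitution, and I would present it in two or three lines.
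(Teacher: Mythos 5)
Your proposal is correct and matches the paper's (implicit) approach exactly: the paper presents \Cref{cor2} and \Cref{cor3} immediately after \Cref{cor1} as the two extreme cases, leaving the substitution $p_0 = 1$ into \Cref{cor1} as the understood derivation. Your check that $1$ lies in the admissible interval and your remark about existence-propagation are sound bookkeeping.
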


\begin{corollary} \label{cor3} Assume the Riemann Hypothesis, \hyperref[AH2]{AH-Pairs}, and that $p_0=\frac32-\frac{2}{\pi^2}$. Then we have 
\begin{equation*} p_{k/2} =
\begin{cases}
\frac32-\frac2{\pi^2}, &\text{if $k=0$}, \\
1- \frac{2}{\pi^2}, &\text{if $k\neq 0$ is even}, \\
\frac{2}{\pi^2}\left(1-\frac{1}{k^2}\right), &\text{if $k$ is odd}.
\end{cases}
\end{equation*}
Note here that $p_{1/2}=0$.
\end{corollary}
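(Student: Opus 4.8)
The plan is to obtain this immediately from Corollary \ref{cor1} by specialization. The hypothesis ``$p_0 = \frac32 - \frac{2}{\pi^2}$'' already presupposes that the limit defining $p_0$ in \eqref{pk/2} exists (and equals this particular value), so the full hypotheses of Corollary \ref{cor1} --- namely RH, \hyperref[AH2]{\textbf{AH-Pairs}}, and the existence of $p_0$ --- are in force. Hence Corollary \ref{cor1} applies verbatim: every limiting density $p_{k/2}$ exists, and each is given by the corresponding case of the formula stated there. All that remains is to substitute the assumed value of $p_0$.

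Carrying this out: for $k=0$ we have $p_0 = \frac32 - \frac{2}{\pi^2}$ by assumption. For $k\neq 0$ even, Corollary \ref{cor1} gives $p_{k/2} = p_0 - \frac12 = \big(\frac32 - \frac{2}{\pi^2}\big) - \frac12 = 1 - \frac{2}{\pi^2}$. For $k$ odd, Corollary \ref{cor1} gives $p_{k/2} = \frac32 - \frac{2}{\pi^2 k^2} - p_0 = \frac32 - \frac{2}{\pi^2 k^2} - \big(\frac32 - \frac{2}{\pi^2}\big) = \frac{2}{\pi^2}\big(1 - \frac{1}{k^2}\big)$; taking $k = \pm 1$ yields $p_{1/2} = 0$. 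This is precisely the claimed case formula, so there is essentially nothing beyond Theorem \ref{thm1}/Corollary \ref{cor1} and arithmetic to do, and consequently no genuine obstacle --- the only point requiring a word is to confirm that assuming a value for $p_0$ legitimately triggers the existence statement in Corollary \ref{cor1}.

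It is worth recording a consistency check that also explains the ``$p_{1/2}=0$'' remark. All the resulting densities are nonnegative: in the odd case $1 - 1/k^2 \ge 0$ for every odd $k$, with equality exactly at $k = \pm 1$. Moreover $p_0 = \frac32 - \frac{2}{\pi^2}$ is the upper endpoint of the interval in \eqref{thm1a}, so the delta-mass $2(P_0 - 1) \to 1 - \frac{4}{\pi^2}$ placed at each odd integer (see the discussion surrounding Figure \ref{fig-AH-triangles} and Theorem \ref{thm4-r(k)p_k}) is as large as \hyperref[AH2]{\textbf{AH-Pairs}} and \hyperref[MonThm]{\textbf{MT}} together permit; this is exactly what forces $p_{1/2}$ down to $0$ in this extreme case, mirroring how the opposite extreme $p_0 = 1$ in Corollary \ref{cor2} makes $p_{1/2} = \frac12 - \frac{2}{\pi^2}$ maximal.
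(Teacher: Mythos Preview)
Your proposal is correct and matches the paper's own treatment: the paper states Corollary~\ref{cor3} immediately after Corollary~\ref{cor1} as one of ``two further corollaries to \Cref{thm1} as the two extreme cases of \hyperref[AH2]{\textbf{AH-Pairs}}'' and gives no separate proof, so specializing Corollary~\ref{cor1} to $p_0=\frac32-\frac{2}{\pi^2}$ and doing the arithmetic is exactly what is intended.
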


We now outline the proof of \Cref{thm1}. 
We need to evaluate the sum over $\gamma-\gamma'$ in \hyperref[lemsumpair]{\textbf{MT-Pairs}} using \hyperref[AH2]{\textbf{AH-Pairs}}. Since \hyperref[AH2]{\textbf{AH-Pairs}} applies to terms $(\gamma,\gamma') \in \mathcal{P}(T,M)$, we first need to estimate the error in removing the terms where $(\gamma,\gamma')\not \in \mathcal{P}(T,M)$. 

\begin{lem} \label{sumerror} Let $b> 1$ be a constant, and suppose $r(t)\in L^1(\mathbb{R})$, $|r(t)| \ll 1$ for all $t$, and that there is a constant $C(r)$ depending on $r$ for which
\begin{equation*} 
|r(t)| \ll \frac{C(r)}{|t|^b}, \qquad \text{for }\ |t|\ge M. \end{equation*}
Then we have
\begin{equation} \label{lem2a}
\begin{aligned}
\sum_{0<\gamma,\gamma'\leq T} r\left(\frac{\gamma-\gamma'}{2\pi}\log T\right)w(\gamma-\gamma') = \sum_{(\gamma,\gamma')\in \mathcal{P}(T,M)} &r\left(\frac{\gamma-\gamma'}{2\pi}\log T\right)w(\gamma-\gamma') \\
&\qquad+ O\left( \frac{ C(r) T\log T}{M^{b-1}}\right) + O(T).
\end{aligned}
\end{equation}
We may delete the factor $w(\gamma-\gamma')$ from the sum on the right-hand side if we wish. 
\end{lem}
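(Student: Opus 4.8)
The plan is to split the full sum over all pairs $0 < \gamma, \gamma' \le T$ into the part over $\mathcal{P}(T,M)$ and a remainder, and then bound the remainder. The remainder consists of two types of terms: (a) pairs with $|\frac{\gamma-\gamma'}{2\pi}\log T| > M$, and (b) pairs with at least one of $\gamma, \gamma'$ at most $T/\log^2 T$. For type (a), I would use the decay hypothesis $|r(t)| \ll C(r)/|t|^b$ together with $w(u) \ll 1$, so that the contribution is bounded by
\[
C(r)\sum_{\substack{0<\gamma,\gamma'\le T \\ |\gamma-\gamma'|\log T/2\pi > M}} \left(\frac{2\pi}{|\gamma-\gamma'|\log T}\right)^{b}.
\]
Organizing this sum dyadically over ranges $2^j M < |\frac{\gamma-\gamma'}{2\pi}\log T| \le 2^{j+1}M$, and using the standard count $\#\{(\gamma,\gamma') : 0<\gamma,\gamma'\le T, \ |\gamma-\gamma'| \le h\} \ll T\log T \cdot (h\log T + 1)$ for $h \ge 1/\log T$ (which follows from \eqref{N(T)}), each dyadic block of ``width'' $h_j \asymp 2^j M/\log T$ contributes $\ll (T\log T)(2^j M) \cdot (2^j M)^{-b} = T\log T\, M^{1-b} 2^{-j(b-1)}$, and summing the geometric series in $j$ (using $b>1$) gives the claimed $O(C(r) T\log T / M^{b-1})$.

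For type (b), I would use only $|r(t)| \ll 1$ and $w(u) \ll 1$, so this contribution is at most a constant times the number of pairs $(\gamma,\gamma')$ with $\gamma \le T/\log^2 T$ or $\gamma' \le T/\log^2 T$ (and both $\le T$). By \eqref{N(T)}, $N(T/\log^2 T) \ll (T/\log^2 T)\log T = T/\log T$, and $N(T) \ll T\log T$, so the number of such pairs is $\ll (T/\log T)(T\log T) = T^2$. But this is far too large, so the key realization is that $r \in L^1(\mathbb{R})$ must be exploited here as well: since $w(\gamma-\gamma') \le 1$ and $r$ is summed against a function supported (effectively) near the diagonal, I should instead bound the type-(b) contribution by fixing the zero that is large, say $\gamma' \in (T/\log^2 T, T]$, summing $|r(\frac{\gamma-\gamma'}{2\pi}\log T)|$ over all $\gamma$ with $0 < \gamma \le T$, which by the $L^1$-bound and spacing of zeros is $\ll \int_{\mathbb R} |r(t)|\,dt \ll 1$ for each fixed $\gamma'$ (more precisely $\ll 1 + \|r\|_1$ per unit interval, times $O(1)$ intervals), hence $\ll N(T) \ll T\log T$ total; and then the pairs where \emph{both} coordinates are small number $\ll (T/\log T)^2 \ll T$. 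Combining, the type-(b) error is $O(T\log T)$ — but I want only $O(T)$. The resolution is that the type-(b) terms to be \emph{removed} are exactly those with $\gamma$ or $\gamma'$ at most $T/\log^2 T$, and for such a pair to contribute at all we essentially need $|\gamma - \gamma'| \ll M/\log T$, forcing \emph{both} coordinates to be $\le T/\log^2 T + O(M/\log T) \ll T/\log^2 T$; hence the count is $\ll (T/\log^2 T)(\log(T/\log^2 T)) \cdot (M/\log T)(\log T) \ll TM/\log T$, which is $O(T)$ after absorbing $M$ into the implied constant (recall $M$ is fixed). Here I would separately treat the ``far'' part of type (b), where $|\gamma-\gamma'|\log T/2\pi > M$, by folding it into the type-(a) estimate.

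The main obstacle is precisely this bookkeeping in type (b): naively the number of pairs with a small coordinate is $\Theta(T^2)$, so the argument must genuinely use that $r$ is integrable and (after accounting for the tail) effectively localized to $|\gamma-\gamma'| \ll M/\log T$, which then bounds the relevant count by $O(TM/\log T) = O(T)$. Once the split is set up carefully — type (a) handled by dyadic decomposition and the decay hypothesis, the near part of type (b) by the localization-plus-counting argument above, and the far part of type (b) absorbed into type (a) — the lemma follows. Finally, deleting the factor $w(\gamma-\gamma')$ from the right-hand side changes the sum by $\sum_{(\gamma,\gamma')\in\mathcal{P}(T,M)} r(\frac{\gamma-\gamma'}{2\pi}\log T)(1 - w(\gamma-\gamma'))$; since $0 \le 1 - w(u) \ll u^2$ and on $\mathcal{P}(T,M)$ we have $|\gamma-\gamma'| \ll M/\log T$, this is $\ll (M/\log T)^2$ times the number of pairs, which is $\ll (M/\log T)^2 \cdot T\log T \cdot M = O(T/\log T) = O(T)$, absorbed into the existing error term.
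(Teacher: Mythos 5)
Your argument is correct, but for the ``small coordinate'' terms (your type~(b)) you take a more roundabout route than the paper. The paper handles these in a single stroke by \emph{not} throwing away the decay of the weight: it uses $|r(t)|\ll 1$ together with $w(\gamma-\gamma') = 4/(4+(\gamma-\gamma')^2)$ and the classical bound \eqref{wSum}, namely $\sum_\gamma (1+(t-\gamma)^2)^{-1}\ll\log(|t|+2)$, so that
\[
\sum_{0<\gamma'\le T/\log^2 T}\ \sum_{0<\gamma\le T}\frac{1}{4+(\gamma-\gamma')^2}\ \ll\ \sum_{0<\gamma'\le T/\log^2 T}\log T\ \ll\ T,
\]
uniformly in $M$ and with no splitting needed. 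You instead set $w\ll 1$ at the outset, discover the crude count $\asymp T^2$ is useless, take a detour through an $L^1$ estimate that gives only $O(T\log T)$, and finally land on the valid observation that the type~(b) pairs with $|\gamma-\gamma'|\le 2\pi M/\log T$ force both coordinates to be $\ll T/\log^2 T$, while the remaining type~(b) pairs can be absorbed into the dyadic estimate for $|\gamma-\gamma'|\log T/(2\pi)>M$ (which uses \eqref{zeropairbound} over all of $(0,T]^2$ and hence covers them). That final plan works and yields $O(TM/\log T)=O(T)$ for fixed $M$, but it makes the ``$O(T)$'' depend on $M$ and entangles the small-coordinate error with the decay hypothesis on $r$, neither of which is necessary. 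Your dyadic treatment of the far-$|\gamma-\gamma'|$ tail, and your handling of the removal of $w$ (via $1-w(u)\ll u^2$ and $|\mathcal P(T,M)|\ll MT\log T$), match the paper; note a small slip there: the bound is $O(M^3T/\log T)$, not $O(T/\log T)$, though the conclusion $O(T)$ is still correct.
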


Next we evaluate the sum on the right-hand side of \eqref{lem2a} with $w(\gamma-\gamma')$ removed using \hyperref[AH2]{\textbf{AH-Pairs}}. 
\begin{lem} \label{AHsumlemma} With $r(t)$ as in \Cref{sumerror}, we have assuming \hyperref[AH2]{AH-Pairs} that
\begin{equation} \label{AHrsum} \sum_{(\gamma,\gamma')\in \mathcal{P}(T,M)}r\left(\frac{\gamma-\gamma'}{2\pi}\log T\right) = \frac{T}{2\pi}\log T\sum_{\substack{k\in \mathbb{Z} \\ k\ll M}} r(k/2)P_{k/2} + O(M^2R(T)T\log T).
\end{equation}
\end{lem}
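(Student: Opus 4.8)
\textbf{Proof proposal for Lemma \ref{AHsumlemma}.}

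The plan is to split the sum over pairs $(\gamma,\gamma') \in \mathcal{P}(T,M)$ according to the partition of $\mathcal{P}(T,M)$ into the sets $B_{k/2} = B_{k/2}(T,M,\delta)$ defined in \eqref{Bk/2}, where we will eventually let $\delta$ be small (but fixed, or tending to zero slowly). By \hyperref[AH2]{\textbf{AH-Pairs}}, for $T$ large enough every pair in $\mathcal{P}(T,M)$ lies in exactly one $B_{k/2}$ with $|k| \ll M$, so we may write
\begin{equation*}
\sum_{(\gamma,\gamma')\in \mathcal{P}(T,M)} r\!\left(\frac{\gamma-\gamma'}{2\pi}\log T\right)
= \sum_{\substack{k\in\mathbb{Z}\\ k\ll M}} \ \sum_{(\gamma,\gamma')\in B_{k/2}} r\!\left(\frac{\gamma-\gamma'}{2\pi}\log T\right).
\end{equation*}
For a pair in $B_{k/2}$, the quantity $\tfrac{\gamma-\gamma'}{2\pi}\log T$ lies within $\delta/2$ of $k/2$, so I would replace $r\big(\tfrac{\gamma-\gamma'}{2\pi}\log T\big)$ by $r(k/2)$ and control the error. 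Since $r$ is merely $L^1$ with $|r|\ll 1$, one cannot use a crude Lipschitz bound on all of $\mathbb{R}$; instead the natural device is to bound the total error by $|B_{k/2}|$ times the oscillation $\sup_{|u|\le \delta/2}|r(k/2+u)-r(k/2)|$, or more robustly by $2\sup_{|u-k/2|\le\delta/2}|r(u)|\cdot|B_{k/2}|$, and then sum. Using $|B_{k/2}| \le |\mathcal{P}(T,M)| \ll MT\log T$ (which follows from \eqref{N(T)}: there are $\ll T\log T$ zeros up to $T$ and $\ll M$ choices of the partner within the allowed window, a count one can extract from \hyperref[MonThm]{\textbf{MT}} or directly) and summing over the $\ll M$ relevant values of $k$, one gets a total error of size $O(M^2 R(T) T\log T)$ once the oscillation of $r$ over intervals of length $\delta$ is absorbed — here the key point is that $\delta$ should be taken comparable to $R(T)\log$-scale so that the displacement $u = O((|k|+1)R(T))$ forced by \eqref{AH2k} dominates, giving the clean $(|k|+1)R(T) \ll MR(T)$ bound per pair and hence $M\cdot M\cdot R(T)\cdot T\log T$ after summing over the $\ll M$ values of $k$ and using $|B_{k/2}|\ll T\log T$ for each individual $k$.

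More carefully, I would proceed as follows. First, by \hyperref[AH2]{\textbf{AH-Pairs}}, for $(\gamma,\gamma')\in B_{k/2}$ we have $\tfrac{\gamma-\gamma'}{2\pi}\log T = \tfrac k2 + O((|k|+1)R(T))$, so writing $t = \tfrac{\gamma-\gamma'}{2\pi}\log T$,
\begin{equation*}
\Big| r(t) - r(k/2)\Big| \ \le\ 2\sup_{|s|\le c(|k|+1)R(T)} \big| r(k/2 + s)\big| \ \ll\ 1
\end{equation*}
for an absolute constant $c$, using only $|r|\ll 1$. Then
\begin{equation*}
\Bigg| \sum_{(\gamma,\gamma')\in B_{k/2}} \!\! r(t) \ -\ r(k/2)\,|B_{k/2}| \Bigg|
\ \ll\ |B_{k/2}|.
\end{equation*}
This by itself only gives an error $\ll \sum_{k\ll M}|B_{k/2}| = |\mathcal{P}(T,M)| \ll MT\log T$, which is weaker than claimed. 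To recover the $M^2R(T)$ factor, I would instead argue that the pairs contributing to the error are confined to a set of measure proportional to $R(T)$ of the full window: the condition $|t - k/2| \le c(|k|+1)R(T)$ restricts $t$ to a union of short intervals of total length $\ll M\cdot MR(T) = M^2 R(T)$ (there are $\ll M$ values of $k$, each contributing an interval of length $\ll (|k|+1)R(T)\ll MR(T)$). A pair counting bound of the shape "$\#\{(\gamma,\gamma')\colon 0<\gamma,\gamma'\le T,\ \tfrac{\gamma-\gamma'}{2\pi}\log T \in I\} \ll (|I|+1)\,\tfrac{T}{2\pi}\log T$" for an interval $I$ — itself a consequence of \hyperref[MonThm]{\textbf{MT}} (taking a suitable nonnegative test function dominating $\mathbbm{1}_I$) or of the classical local density of zeros — then shows the union of those short intervals captures $\ll M^2 R(T)\cdot T\log T + (\text{negligible})$ pairs, and since on each such pair $|r(t)-r(k/2)|\ll 1$, the total error is $O(M^2 R(T) T\log T)$ as required. (The "$+1$" in the pair count, summed over $\ll M$ values of $k$, contributes $O(M T\log T)$, which is absorbed into $O(M^2 R(T)T\log T)$ provided $M R(T)\gg 1$; when $M R(T) \to 0$ one instead absorbs it differently, but since \hyperref[AH2]{\textbf{AH-Pairs}} lets us take $M$ as large as we wish while $R(T)$ is fixed by $T$, the regime of interest has $M$ large, so this is not a real difficulty.)

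Finally, the main term is $\sum_{k\ll M} r(k/2)|B_{k/2}| = \tfrac{T}{2\pi}\log T \sum_{k\ll M} r(k/2) P_{k/2}$ by the definition $P_{k/2} = (\tfrac{T}{2\pi}\log T)^{-1}|B_{k/2}|$, which is exactly the right-hand side of \eqref{AHrsum}. I expect the main obstacle to be the pair-counting input: getting an error of the claimed quality $O(M^2R(T)T\log T)$ rather than the trivial $O(MT\log T)$ requires a local bound on the number of zero-pairs with normalized difference in a short interval, uniform in the position and length of the interval. This is standard — it follows from \hyperref[MonThm]{\textbf{MT}} applied with a fixed smooth majorant of $\mathbbm{1}_I$, or alternatively from the bound $N(t+1)-N(t)\ll \log t$ — but it must be invoked with care so that the implied constants do not secretly depend on $M$ or $\delta$. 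Everything else (splitting over $B_{k/2}$, bounding $|r(t)-r(k/2)|$, reassembling the main term) is routine.
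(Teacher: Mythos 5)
Your decomposition into the $B_{k/2}$ and the identification $\sum_k r(k/2)\,|B_{k/2}| = \frac{T}{2\pi}\log T\sum_k r(k/2)P_{k/2}$ are exactly the paper's approach, but your error estimate has a genuine gap. You correctly notice that $|r|\ll 1$ alone gives only $|r(t)-r(k/2)|\ll 1$ per pair, hence $O(MT\log T)$ in total, and then try to recover the extra factor $MR(T)$ by a pair-counting argument. This cannot work. The available pair-count is \eqref{zeropairbound}, namely $\ll(1+h\log T)T\log T$, and the ``$+1$'' is unavoidable: under \hyperref[AH2]{\textbf{AH-Pairs}} \emph{every} pair in $\mathcal{P}(T,M)$ has its normalized difference within $O((|k|+1)R(T))$ of some half-integer $k/2$, so the union of your short intervals is not a thin exceptional set---it contains \emph{all} pairs. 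Moreover $|B_{k/2}|\asymp T\log T$ for each $|k|\ll M$ (indeed $P_{k/2}$ tends to a positive constant by \Cref{thm1}), no matter how short the interval around $k/2$ is made, so summing the pair-count over the $\ll M$ half-integers gives $\gg MT\log T$. Your parenthetical absorption of this $O(MT\log T)$ into $O(M^2R(T)T\log T)$ ``provided $MR(T)\gg 1$'' inverts the order of limits: in \hyperref[AH2]{\textbf{AH-Pairs}}, $M$ is fixed while $T\to\infty$, so $R(T)\to 0$ and $MR(T)\to 0$, and the $MT\log T$ term dominates. There is no regime in the intended application where your absorption is valid.

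The paper closes this gap by a different mechanism, which is the ingredient you were missing. All $r$'s used here are of the form $r=\widehat g$ with $g$ (equivalently $\widehat r$) supported in $[-1,1]$: the paper's proof invokes the hypotheses of \hyperref[lemsumpair]{\textbf{MT-Pairs}} at its opening line, even though the Lemma statement only cites \Cref{sumerror}. This bandlimitedness yields a \emph{uniform} Lipschitz estimate
\begin{equation*}
|r(t+h)-r(t)| \le \max_{|\alpha|\le 1}\bigl|e(h\alpha)-1\bigr|\int_{-1}^{1}|\widehat r(\alpha)|\,d\alpha \ll \min\{1,|h|\},
\end{equation*}
so each pair in $B_{k/2}$ contributes $|r(t)-r(k/2)|\ll (|k|+1)R(T)\ll MR(T)$ rather than merely $O(1)$. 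The total error is then $\ll MR(T)\sum_{|k|\ll M}|B_{k/2}|\ll MR(T)\cdot|\mathcal{P}(T,M)|\ll M^2R(T)T\log T$ by \eqref{P(T,M)bound}. In short, the saving comes from per-pair smallness of $|r(t)-r(k/2)|$, not from a reduction in the number of pairs; without recognizing the hidden Fourier-support hypothesis on $r$, your route cannot reach the stated error term.
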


To complete the proof of \Cref{thm1} we make use of a special Fourier transform pair. 
\begin{lem} \label{lemg_n} For $n\ge 1$ a positive integer, let
\begin{equation*} g_n(\alpha) =
\begin{cases}\sin^2(\frac{n\pi}{2}\alpha),
 &|\alpha|\le1,\quad \text{ $n$ even,} \\
\cos^2(\frac{n\pi}{2}\alpha), &|\alpha|\le1, \quad \text{ $n$ odd,}\\
0, &|\alpha|>1.
\end{cases}
\end{equation*}
Clearly $g_n(\alpha)$ is even and in $ L^{\!1}(\mathbb R)$, and we have
\begin{equation*} 
\widehat{g}_n(t) = \frac{\sin(2\pi t)}{2\pi t}\left(\frac{n^2}{n^2-4t^2}\right), \end{equation*}
and thus
\begin{equation} \label{g_nbound}
|\widehat{g}_n(t)| \ll \begin{cases}
\frac{1}{|t|+1}, &\text{if }~ 0\le |t|\le n/4, \\
1, &\text{if }~ n/4<|t|<n, \\
\frac{n^2}{|t|^3} &\text{if }~ |t|\ge n.
\end{cases}
\end{equation}
Therefore $\widehat{g}_n(t) \in L^{\!1}(\mathbb R)$.
Finally, for $k\in \mathbb Z$, we have
\begin{equation}\label{hatg_nzero} \widehat{g}_n(k/2) = \begin{cases}
1, & \text{if $k=0$,}\\
\frac{(-1)^{n+1}}2, & \text{if $k= \pm n$,}\\
0 & \text{otherwise.}
\end{cases}
\end{equation}
\end{lem}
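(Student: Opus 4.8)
The plan is to verify each asserted property of $g_n$ and $\widehat{g}_n$ essentially by direct computation, organized so that the Fourier transform evaluation is done once and everything else follows. First I would record that $g_n$ is even, bounded, and supported in $[-1,1]$, hence in $L^1(\mathbb{R})$; this is immediate from the piecewise definition since $\sin^2$ and $\cos^2$ are even. Next, using the identity $\sin^2\theta = \tfrac12(1-\cos 2\theta)$ and $\cos^2\theta = \tfrac12(1+\cos 2\theta)$, I would rewrite $g_n(\alpha) = \tfrac12\big(1 + (-1)^{n+1}\cos(n\pi\alpha)\big)$ for $|\alpha|\le 1$ (the sign $(-1)^{n+1}$ correctly produces $-\cos$ when $n$ is even and $+\cos$ when $n$ is odd). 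Then
\[
\widehat{g}_n(t) = \int_{-1}^1 g_n(\alpha) e(-t\alpha)\,d\alpha = \tfrac12\int_{-1}^1 e(-t\alpha)\,d\alpha + \tfrac{(-1)^{n+1}}{2}\int_{-1}^1 \cos(n\pi\alpha) e(-t\alpha)\,d\alpha.
\]
The first integral is $\dfrac{\sin(2\pi t)}{2\pi t}$. For the second, I would write $\cos(n\pi\alpha) = \tfrac12\big(e(\tfrac{n}{2}\alpha) + e(-\tfrac{n}{2}\alpha)\big)$, so the integral becomes $\tfrac12\int_{-1}^1\big(e((\tfrac n2 - t)\alpha) + e((-\tfrac n2 - t)\alpha)\big)\,d\alpha = \tfrac12\Big(\dfrac{\sin(\pi(n-2t))}{\pi(n/2 - t)} + \dfrac{\sin(\pi(n+2t))}{\pi(n/2+t)}\Big)$. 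Using $\sin(\pi(n\pm 2t)) = (-1)^n\sin(\pm 2\pi t) = \pm(-1)^n\sin(2\pi t)$, both terms combine after clearing denominators to give a multiple of $\sin(2\pi t)\big/(n^2/4 - t^2)$; carefully tracking the constants and the factor $(-1)^{n+1}\cdot(-1)^n = -1$, I expect the two pieces to assemble into exactly $\dfrac{\sin(2\pi t)}{2\pi t}\cdot\dfrac{n^2}{n^2 - 4t^2}$, with the apparent poles at $t = \pm n/2$ being removable since $\sin(2\pi t)$ vanishes there (consistent with the stated values $\widehat{g}_n(\pm n /2) = \pm\tfrac12$ being a limit).

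For the bound \eqref{g_nbound}, I would treat the three ranges separately. When $|t|\le n/4$ we have $n^2 - 4t^2 \ge n^2 - n^2/4 = \tfrac34 n^2$, so $\big|n^2/(n^2-4t^2)\big|\le \tfrac43$, and combined with $|\sin(2\pi t)/(2\pi t)|\ll 1/(|t|+1)$ (trivially bounded by $1$ near $0$ and by $1/(2\pi|t|)$ away from $0$) this gives the first case. When $n/4 < |t| < n$, I would use $|\sin(2\pi t)/(2\pi t)|\ll 1/n$ together with $|\sin(2\pi t)/(n^2 - 4t^2)|\ll 1/n$ — the point is that $|\sin(2\pi t)|$ is comparable to the distance from $t$ to the nearest half-integer, which dominates $|n^2 - 4t^2| = |n-2t||n+2t|$ divided by the other factor; more simply, since $\widehat{g}_n$ is the Fourier transform of an $L^1$ function with $\|g_n\|_\infty\le 1$ and support of length $2$, one has the crude uniform bound $|\widehat{g}_n(t)|\le \|g_n\|_1\le 2$, which already suffices for the middle range if the constant is allowed to be absolute. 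When $|t|\ge n$ we have $|n^2 - 4t^2|\ge 3t^2$ and $|\sin(2\pi t)/(2\pi t)|\le 1/(2\pi|t|)$, giving $|\widehat{g}_n(t)|\ll n^2/|t|^3$; summing or integrating this against the $t^{-1}$-type behavior on the bounded ranges shows $\widehat{g}_n\in L^1(\mathbb{R})$.

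Finally, for \eqref{hatg_nzero} I would substitute $t = k/2$ into the closed form. For $k = 0$, both factors are evaluated by their limits: $\sin(2\pi t)/(2\pi t)\to 1$ and $n^2/(n^2 - 4t^2)\to 1$, so $\widehat{g}_n(0) = 1$ (equivalently $\widehat{g}_n(0) = \int_{-1}^1 g_n = 1$ since the average of $\sin^2$ or $\cos^2$ over a full number of periods contributes nothing beyond the constant $\tfrac12$ times the interval length $2$). For $k = \pm n$ the factor $n^2/(n^2 - 4t^2)$ has a simple pole while $\sin(2\pi t) = \sin(\pm\pi n) = 0$ has a simple zero; a short l'Hôpital or Taylor computation gives the residue, and the sign works out to $(-1)^{n+1}/2$ as claimed. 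For all other integers $k$ with $k\ne 0,\pm n$, we have $\sin(2\pi t) = \sin(\pi k) = 0$ while the rational factor is finite, so $\widehat{g}_n(k/2) = 0$. The main obstacle I anticipate is purely bookkeeping: getting every sign and constant right in the Fourier integral (especially the interplay of the $(-1)^n$ from $\sin(\pi(n\pm 2t))$ with the $(-1)^{n+1}$ prefactor) and confirming the removable-singularity values at $t = \pm n/2$ and $t = \pm n$ — there is no conceptual difficulty, only the risk of an arithmetic slip.
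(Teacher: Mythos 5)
Your proposal is correct and follows essentially the same route as the paper: rewriting $g_n(\alpha) = \tfrac12(1+(-1)^{n+1}\cos(n\pi\alpha))$ on $[-1,1]$, computing the Fourier transform by product-to-sum (you use complex exponentials where the paper uses $\cos(n\pi\alpha)\cos(2\pi t\alpha)$, but these are the same calculation), using $\sin\pi(n\pm 2t)=\pm(-1)^n\sin 2\pi t$ to collapse to the closed form, and then treating each of the three ranges and the half-integer values by direct estimation and L'H\^opital. The only stylistic difference is in the middle range $n/4<|t|<n$, where the paper factors $\sin(2\pi t)=(-1)^n\sin\pi(2t-n)$ to obtain the explicit constant $4/3$ while you fall back on the cleaner trivial bound $|\widehat{g}_n(t)|\le\|g_n\|_1$; both are fine for the $O(1)$ claim.
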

\noindent The middle bound in \eqref{g_nbound} is crude but suffices for our needs, and by \eqref{hatg_nzero} cannot be improved when $t=n/2$. 

In our proof of \Cref{thm1} we use the Fourier transform pairs from \Cref{lemg_n} in \hyperref[lemsumpair]{\textbf{MT-Pairs}}, where both $g_n(\alpha)$ and $\widehat{g}_n(t)$ are in $L^{\!1}(\mathbb R)$, $g_n(\alpha)$ has support in $|\alpha|\le 1$, and $\widehat{g}_n(t)$ vanishes for all half integers $k/2$ except the three values $k/2=0, \pm n/2$. This allows us to determine $P_0$ in terms of $P_{n/2}=P_{-n/2}$. Turning now to \Cref{thm3-simple_zeros}, this suggests that if we had a similar kernel which vanished on all half-integers except $k/2=0$ we would be able to determine $P_0$, which formally could be done by taking $n\to \infty$ for $\widehat{g}_n(t)$ in \Cref{lemg_n}. Proceeding directly, Montgomery's first example of a kernel used in \hyperref[lemsumpair]{\textbf{MT-Pairs}} was the Fourier transform pair 
\begin{equation} \label{k-kernel} \widehat{k}(t)= \frac{\sin(2\pi\beta t)}{2\pi \beta t}, \qquad k(\alpha) = \begin{cases}
\frac{1}{2\beta}, &\text{if } |\alpha|\le \beta, \\
0, &\text{if } |\alpha|> \beta.
\end{cases}
\end{equation}
From \eqref{pairsum} of \hyperref[lemsumpair]{\textbf{MT-Pairs}}, Montgomery \cite[Corollary 1]{Montgomery73} obtained, assuming RH and fixed $0<\beta \le 1$,
\begin{equation} \label{Msum1} \sum_{0<\gamma , \gamma'\le T}\left( \frac{\sin \beta(\gamma-\gamma')\log T}{\beta (\gamma-\gamma')\log T}\right) w(\gamma-\gamma') = \left(\frac1{2\beta}+ \frac{\beta}{2}\right) \frac{T}{2\pi} \log T + O\left(T\sqrt{\log T}\right). \end{equation}
While the kernels in \eqref{k-kernel} have the desired properties, unfortunately $\widehat{k}(t)\not \in L^{\!1}(\mathbb R)$ and the sum on the left-hand side of \eqref{Msum1} is only conditionally convergent without the weight $w(\gamma-\gamma')$. This is why all applications of \hyperref[lemsumpair]{\textbf{MT-Pairs}} use Fourier transform pairs $g$ and $\widehat{g}$ where both $g_n$ and $\widehat{g}$ are in $L^{\!1}(\mathbb R)$. However, in proving \Cref{thm3-simple_zeros} we have available \hyperref[AH2Strong]{\textbf{Strong AH-Pairs}} which handle the difference $\gamma-\gamma'$ up to bounded size, at which point the weight $w(\gamma-\gamma')$ damps out the remaining part of the sum. 

We next turn to determining the shape of $F(\alpha)$ as illustrated in \Cref{sec:1}, where we assume RH and \hyperref[AH2]{\textbf{AH-Pairs}} and do not restrict ourselves to $|\alpha|\le 1$.
Our results follow the same method used in \cite{Bal16} where the assumption that $P_0=p_0=1$ was applied. The proof there can be easily modified to also handle the case where $\limsup P_0>1$.

We now define the delta function $\delta_a(x)$ intuitively by \[ \delta_a(x)=0 \quad \text{if} \quad x\neq a, \qquad \int_{a-\eta}^{a+\eta}\delta_a(x)\, dx = 1 \quad \text{for any $\eta>0$}. \]
We may also have additional one-sided information on a delta function; here we consider three types of such delta functions $\delta_a^=(x)$, $\delta_a^+(x)$, $\delta_a^-(x)$, where for any $\eta>0$,
\[ \int_{a-\eta}^{a}\delta_a^=(x)\, dx =\int_{a}^{a+\eta}\delta_a^=(x)\, dx = 1/2 , \]
\[ \int_{a-\eta}^{a}\delta_a^+(x)\, dx = 0, \qquad \int_{a}^{a+\eta}\delta_a^+(x)\, dx = 1,\]
\[ \int_{a-\eta}^{a}\delta_a^-(x)\, dx = 1, \qquad \int_{a}^{a+\eta}\delta_a^-(x)\, dx = 0.\]
Define
\begin{equation}\label{calF}
\begin{aligned}
\mathcal{F}(\alpha) &:= s(\alpha) + \delta_0^=(\alpha) +
\delta_1^+(\alpha) + \delta_{-1}^-(\alpha) \\
&\qquad+ \sum_{n=1}^\infty (\delta_{2n}(\alpha)+\delta_{-2n}(\alpha)) + 2(P_0-1)\sum_{n=2}^\infty \left(\delta_{2n-1}(\alpha) + \delta_{1-2n}(\alpha)\right),
\end{aligned}
\end{equation}
where 
\begin{equation} \label{s(alpha)}
s(\alpha)=|\alpha| \quad\text{for } |\alpha|\le 1, \qquad \text{and} \qquad
s(\alpha+2) = s(\alpha) \quad\text{for all $\alpha$}. \end{equation}
If we only make use of the ordinary delta function then \eqref{calF} is equivalent to
\begin{equation} \label{Fin[0,2]}
\mathcal{F}(\alpha) = \min(\alpha, 2-\alpha) + \delta_{0}(\alpha) + 2(P_0-1)\delta_{1}(\alpha)\qquad \text{for} \qquad 0\le \alpha \le 2,
\end{equation}
together with the properties 
\begin{equation} \label{evenperiod2}
\mathcal{F}(-\alpha)=\mathcal{F}(\alpha), \quad \mathcal{F}(\alpha+2)=\mathcal{F}(\alpha), \qquad \text{for} \quad \alpha\ge 0.
\end{equation}
\begin{theorem} \label{thm4-r(k)p_k} Let $g$ be a Riemann integrable function on a closed interval $[a,b]$, where $a,b \not \in \mathbb{Z}\setminus \{-1,0,1\}$. Suppose in addition that $g$ is Lipschitz continuous at $\mathbb{Z}\setminus \{-1,1\}$ and also right-Lipschitz continuous at $1$ and left-Lipschitz continuous at $-1$. Assuming the Riemann Hypothesis and \hyperref[AH2]{AH-Pairs}, we have as $T\to \infty$ that
\begin{equation} \label{F-calF}
\int_{a}^b F(\alpha)g(\alpha)\, d\alpha = \int_a^b \mathcal{F}(\alpha)g(\alpha)\, d\alpha +o(1), \end{equation}
where $\mathcal{F}$ is given in \eqref{calF} and is the bottom plot in Figure \ref{fig-AH-triangles}.  
\end{theorem}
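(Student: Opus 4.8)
The plan is to prove \eqref{F-calF} first for smooth test functions, via \hyperref[lemsumpair]{MT-Pairs} together with \hyperref[AH2]{AH-Pairs}, and then extend to general Riemann integrable $g$ using the nonnegativity of $F$. So let $h\in C_c^\infty(\mathbb R)$, so that $\widehat h$ is Schwartz. By the identity \eqref{Sumpair} we have $\frac{T}{2\pi}\log T\int_{\mathbb R}F(\alpha)h(\alpha)\,d\alpha=\sum_{0<\gamma,\gamma'\le T}\widehat h\!\left(\tfrac{\gamma-\gamma'}{2\pi}\log T\right)w(\gamma-\gamma')$. I would then apply \Cref{sumerror} with $r=\widehat h$ (any $b>1$, with $C(\widehat h)<\infty$ by Schwartz decay) to restrict the right side to pairs in $\mathcal P(T,M)$ and drop the weight $w$, and next \Cref{AHsumlemma} to evaluate the restricted sum as $\frac{T}{2\pi}\log T\sum_{k\ll M}\widehat h(k/2)P_{k/2}(T)+O(M^2R(T)T\log T)$. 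Dividing through by $\frac{T}{2\pi}\log T$, letting $T\to\infty$ and then $M=M(T)\to\infty$ slowly (so that $M^2R(T)\to0$, $M^2/\log^2T\to0$, $M^{-(b-1)}\to0$ and the tail $\sum_{|k|>M}|\widehat h(k/2)|\to0$) yields $\int_{\mathbb R}Fh=\sum_{k\in\mathbb Z}\widehat h(k/2)P_{k/2}(T)+o(1)$. Now I would substitute the relations of \Cref{thm1}, namely $P_{k/2}(T)=P_0(T)-\tfrac12+o(1)$ for even $k\ne0$ and $P_{k/2}(T)=\tfrac32-\tfrac{2}{\pi^2k^2}-P_0(T)+o(1)$ for odd $k$, the per-$k$ errors being absorbed into an overall $o(1)$ since $\sum_k|\widehat h(k/2)|<\infty$. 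Applying Poisson summation on $\mathbb Z$ and on $\tfrac12\mathbb Z$ — $\sum_{k\ \mathrm{even}}\widehat h(k/2)=\sum_n h(n)$, $\sum_{k\in\mathbb Z}\widehat h(k/2)=2\sum_n h(2n)$ — together with the triangle-wave expansion $\sum_{k\ \mathrm{odd}}k^{-2}e(-k\alpha/2)=\tfrac{\pi^2}{4}\big(1-2s(\alpha)\big)$, the $\widehat h(0)$ terms cancel and the series collapses to $\int_{\mathbb R}h(\alpha)s(\alpha)\,d\alpha+\sum_{n\in\mathbb Z}h(2n)+2(P_0(T)-1)\sum_{n\in\mathbb Z}h(2n+1)$, which for $h$ continuous at the integers is $\int_{\mathbb R}\mathcal F(\alpha)h(\alpha)\,d\alpha$ (the $\mathcal F$ of \eqref{calF}, which depends on $T$ through $P_0=P_0(T)$). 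Hence \eqref{F-calF} holds whenever $g$ is smooth and supported in the interior of $(a,b)$.

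To reach a general Riemann integrable $g$, I would use that $F\ge0$ by \hyperref[MonThm]{MT}: taking $h$ a fixed nonnegative $C_c^\infty$ bump with $h\ge\mathbbm{1}_{[a,b]}$ in the step above shows $\int_a^bF(\alpha)\,d\alpha=O(1)$ uniformly in $T$, so $\{F\,d\alpha\}_T$ restricted to $[a,b]$ is a uniformly bounded family of measures. I would cover the integers in $[a,b]$ by small disjoint intervals $U_m$ and let $V$ be the complement, a finite union of closed intervals whose closures avoid $\mathbb Z$. On $V$ the previous paragraph (with test functions supported in $V$, where the integer sums are empty) gives $F\,d\alpha\rightharpoonup s\,d\alpha$; since $g$ is bounded with a Lebesgue-null discontinuity set and $s\,d\alpha$ is absolutely continuous, a standard sandwiching (Portmanteau-type) argument then gives $\int_VFg\to\int_Vsg$. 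On each $U_m$, the Lipschitz hypothesis on $g$ at $m$ (one-sided at $m=\pm1$) lets me replace $g$ by the constant $g(m)$ with error $O(\operatorname{diam}U_m)\int_{U_m}F=O(\operatorname{diam}U_m)$, so only the mass $\int_{U_m}F$ survives, and the first paragraph identifies its limit as $1$ for even $m$, as $2(P_0(T)-1)$ for odd $m$, and as the appropriate one-sided fraction when $m\in\{-1,0,1\}$ is an endpoint of $[a,b]$ — this last point being exactly the role of the symbols $\delta_0^=$, $\delta_1^+$, $\delta_{-1}^-$ in \eqref{calF} and of the hypothesis $a,b\notin\mathbb Z\setminus\{-1,0,1\}$. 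Summing the contributions over $V$ and the $U_m$ and then letting $\operatorname{diam}U_m\to0$ produces $\int_a^bFg=\int_a^b s g+\sum_m g(m)\cdot(\text{mass at }m)+o(1)=\int_a^b\mathcal F g+o(1)$.

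The hardest part, I expect, is the seam at $|\alpha|=1$. On $[-1,1]$ \hyperref[MonThm]{MT} gives $F(\alpha)\to|\alpha|$ with no atom as $\alpha\to\pm1$, whereas \hyperref[AH2]{AH-Pairs} forces an atom of $F$ at every integer, so one must show that the atom at $\alpha=\pm1$ lives entirely on the $|\alpha|>1$ side and is therefore represented by the one-sided $\delta_1^+$, $\delta_{-1}^-$, contributing to $\int_a^b$ only when $[a,b]$ extends past $\pm1$. Reconciling this with the direct computation of the first paragraph requires carrying the Poisson-summation bookkeeping carefully through the boundary modes $k=\pm1,\pm2$ and through the cases in which $a$ or $b$ equals $-1$, $0$, or $1$, and it is also precisely why only one-sided Lipschitz regularity of $g$ at $\pm1$ is assumed. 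A secondary nuisance, already visible in the first paragraph, is keeping the error terms of \Cref{sumerror} and \Cref{AHsumlemma} and the $o(1)$'s of \Cref{thm1} simultaneously controlled over the growing range $|k|\ll M$; this is handled by choosing $M=M(T)\to\infty$ slowly enough and by exploiting the Schwartz decay of $\widehat h$, which is the reason for passing to smooth test functions before the approximation step.
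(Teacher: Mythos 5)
Your route is genuinely different from the paper's. The paper builds the Fej\'er-smoothed function $G_\lambda(\alpha)=\lambda^{-2}\int k_\lambda(\beta)F(\alpha+\beta)\,d\beta$, computes it on $|\alpha|\le 1-\lambda$ from \hyperref[MonThm]{MT} and for general $\alpha$ from \hyperref[AH2]{AH-Pairs}, and then uses a differencing trick on $H_\lambda=\lambda^2 G_\lambda$ to recover unweighted short averages $\int_{\alpha-\lambda}^{\alpha+\lambda}F$ (Lemma \ref{lem5-forThm4}), finally piecing these together over $[a,b]$ with a step-function approximation. You instead take arbitrary $C_c^\infty$ test functions, push through \hyperref[lemsumpair]{MT-Pairs}, Lemmas \ref{sumerror}--\ref{AHsumlemma}, the $P_{k/2}$ relations from \Cref{thm1}, and two Poisson summations plus the sawtooth Fourier series, to land directly on $\int Fh=\int\mathcal F h+o(1)$ for smooth $h$; you then extend to Riemann integrable $g$ by a weak-$*$ tightness/Portmanteau argument anchored on $F\ge 0$. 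Both work. Your approach buys conceptual clarity (it identifies the limit measure once and for all and then uses soft approximation), at the cost of leaning on \Cref{thm1} which the paper establishes separately by the special kernels $g_n$; the paper's approach is more self-contained and, through the differencing on $H_\lambda$, extracts the one-sided structure of the atoms with explicit error terms, which you will find you still need.

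Two points you should tighten. First, when you write $P_{k/2}(T)=P_0(T)-\tfrac12+o(1)$, etc., and claim the per-$k$ errors are absorbed because $\sum_k|\widehat h(k/2)|<\infty$, note that the $o(1)$ from \Cref{thm1} is \emph{not} uniform in $k$: unwinding the proof gives an error $O(k^2/M^2)+O(M^2R(T))+O(1/\log T)$. So you need the Schwartz decay of $\widehat h$ specifically to control $\sum_k|\widehat h(k/2)|\,k^2/M^2=O(M^{-2})$, not merely $\ell^1$ decay; it is worth stating this, since it explains why you pass to $C_c^\infty$ in the first place. Second, and more substantively, your claim that the one-sidedness of the atoms at $\alpha=\pm1$ is settled by ``carrying the Poisson-summation bookkeeping carefully through the boundary modes'' is not quite right: Poisson summation only sees the \emph{total} mass in $U_{\pm1}$, and tells you nothing about which side of $\pm1$ it sits on. The mechanism is the uniform bound from \hyperref[MonThm]{MT} on $[1-\lambda,1]$ (for $\alpha\ge 1/2$, $T^{-2\alpha}\log T\to 0$ and so $F(\alpha)\ll 1$ uniformly), which forces $\int_{1-\lambda}^1 F=O(\lambda)$ and hence pushes the atom entirely to the right of $1$; this is exactly what the paper's Lemma \ref{lem5-forThm4}\ref{lem5-iv} records. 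You do mention that MT gives no atom on $(-1,1)$, so you have the right ingredient in hand, but the sentence should attribute the one-sidedness to that pointwise bound rather than to Poisson summation, and then the one-sided Lipschitz hypothesis on $g$ at $\pm 1$ is used precisely because $g$ only needs to be controlled where the mass actually lives.
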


\begin{remark} Because of \hyperref[MonThm]{\textbf{MT}}, we are able to determine on RH the one-sided delta functions at $\alpha=-1,0,1$. Beyond this range we use \hyperref[AH]{\textbf{AH}}, but our method only allows one to determine averages of $F$ on short intervals around a given point, and thus we cannot determine the one-sided delta functions at any other integers. Since in applications we often want to take $a$ or $b$ to be selected from $\{-1,0,1\}$, we have gone to the trouble to include the limited information we have on one-sided delta functions in \Cref{thm4-r(k)p_k}. 
\end{remark} 

It is often easy to do calculations using \Cref{thm4-r(k)p_k}. One example is the following result concerning a constant\footnote[3]{David Farmer suggested to us that we compute this constant.} that has occurred in an application of pair correlation to the second moment of $S(T)$ \cite{Goldston87}.

\begin{corollary} \label{cor4}
Let 
\begin{equation*} \mathcal{C} := \lim_{T\to\infty} \int_1^\infty \frac{F(\alpha)}{\alpha^2}\, d\alpha \end{equation*}
if this limit exists. Then the following statements hold.
\begin{enumerate}[label={(\alph*)}]
\item Assuming the Riemann Hypothesis and Montgomery's conjecture that $F(\alpha)\sim 1$ for $1\le \alpha \le U$ for arbitrarily large $U$. Then $\mathcal{C}=1$.
\item Suppose that the limiting density $p_0$ exists, then assuming the Riemann Hypothesis and \hyperref[AH2]{AH-Pairs} we obtain
$$ \mathcal{C}= 1 + \left(\frac32(p_0-1)+\frac14\right)\frac{\pi^2}6 + \log\frac2\pi. $$
\begin{itemize}
\item In the particular case when $p_0=1$, the Riemann Hypothesis and \hyperref[AH2]{AH-Pairs} imply
 $$ \mathcal{C}= 1 + \frac{\pi^2}{24} + \log\frac2\pi = 0.95965\dots . $$
\item In the other extreme case $p_0=\frac32-\frac{2}{\pi^2}$, the Riemann Hypothesis and \hyperref[AH2]{AH-Pairs} then imply
 $$ \mathcal{C}= \frac12 + \frac{\pi^2}6 + \log\frac2\pi=1.69335\ldots . $$
\end{itemize}
\end{enumerate}
\end{corollary}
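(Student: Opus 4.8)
The plan is to read $\mathcal C$ as a limit of integrals of $F$ against the fixed test function $g(\alpha)=\alpha^{-2}$, apply \Cref{thm4-r(k)p_k}, and then evaluate the resulting integral of $\mathcal F$ explicitly. For part (a), Montgomery's conjecture supplies $F(\alpha)=1+o(1)$ as $T\to\infty$, uniformly on every fixed segment $1\le\alpha\le U$, so $\int_1^U F(\alpha)\alpha^{-2}\,d\alpha\to\int_1^U\alpha^{-2}\,d\alpha=1-U^{-1}$; granting a bound for the tail $\int_U^\infty F(\alpha)\alpha^{-2}\,d\alpha$ that is uniform in $T$ and tends to $0$ as $U\to\infty$ (the point needing care, discussed below), letting first $T\to\infty$ and then $U\to\infty$ gives $\mathcal C=1$.

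For part (b), fix a large non-integer $b$. On $[1,b]$ the function $g(\alpha)=\alpha^{-2}$ is $C^\infty$, hence Lipschitz on $[1,b]$ and in particular right-Lipschitz at $\alpha=1$, and the endpoints $a=1$, $b$ meet the hypotheses of \Cref{thm4-r(k)p_k}. Thus
\[
\int_1^b F(\alpha)\,\frac{d\alpha}{\alpha^2}=\int_1^b \mathcal F(\alpha)\,\frac{d\alpha}{\alpha^2}+o(1)\qquad(T\to\infty),
\]
with $\mathcal F$ as in \eqref{calF}. Since the periodic part $s$ of $\mathcal F$ is bounded and the point masses of $\mathcal F$ at the integers are $O(1)$, the integral $\int_1^\infty\mathcal F(\alpha)\alpha^{-2}\,d\alpha$ converges absolutely and $\int_1^b\mathcal F(\alpha)\alpha^{-2}\,d\alpha\to\int_1^\infty\mathcal F(\alpha)\alpha^{-2}\,d\alpha$ as $b\to\infty$, while $P_0=P_0(T)\to p_0$ by hypothesis. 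Granting again the uniform tail bound to let $b\to\infty$ inside $\lim_{T\to\infty}$, we obtain $\mathcal C=\int_1^\infty\mathcal F(\alpha)\alpha^{-2}\,d\alpha$ with $P_0$ there replaced by $p_0$.

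It remains to compute this integral, splitting $\mathcal F=s+(\text{point masses})$. For the triangle wave $s$ of \eqref{s(alpha)} one integrates $s(\alpha)\alpha^{-2}$ over each unit interval $[n,n+1]$; within each consecutive pair of intervals the rational contributions $\tfrac1n$, $\tfrac1{n+1}$ cancel, leaving $-\log\prod_{j\ge1}\big(1-(2j+1)^{-2}\big)$ together with the contribution $1-\log 2$ of $[1,2]$, and the Wallis product ($\prod_{m\ge2}(1-m^{-2})=\tfrac12$ and $\prod_{j\ge1}(1-(2j)^{-2})=\tfrac2\pi$, whence $\prod_{j\ge1}(1-(2j+1)^{-2})=\tfrac\pi4$) gives $\int_1^\infty s(\alpha)\alpha^{-2}\,d\alpha=1+\log\tfrac2\pi$. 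For the point masses on $[1,\infty)$: those at the positive even integers have mass $1$, contributing $\sum_{n\ge1}(2n)^{-2}=\tfrac{\pi^2}{24}$; those at the positive odd integers have mass $2(p_0-1)$ — including the one at $\alpha=1$, which by the remark after \Cref{thm4-r(k)p_k} is the one-sided delta $\delta_1^+$ sitting to the right of $1$ and so captured in full by $\int_1^\infty$ — contributing $2(p_0-1)\sum_{n\ge1}(2n-1)^{-2}=\tfrac{\pi^2}{4}(p_0-1)$. Summing,
\[
\mathcal C=1+\log\tfrac2\pi+\tfrac{\pi^2}{24}+\tfrac{\pi^2}{4}(p_0-1)=1+\Big(\tfrac32(p_0-1)+\tfrac14\Big)\tfrac{\pi^2}{6}+\log\tfrac2\pi,
\]
and the two stated special values follow on setting $p_0=1$ and $p_0=\tfrac32-\tfrac2{\pi^2}$ (using $\tfrac{\pi^2}{24}+\tfrac{\pi^2}{8}=\tfrac{\pi^2}{6}$ in the second case).

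The main obstacle is the uniform tail estimate needed to interchange $\lim_{T\to\infty}$ with $\lim_{b\to\infty}$ (equivalently $\lim_{U\to\infty}$): one needs $\int_b^\infty F(\alpha)\alpha^{-2}\,d\alpha\to 0$ as $b\to\infty$, uniformly in $T$. This follows from a crude a priori bound such as $\int_0^X F(\alpha)\,d\alpha\ll X\log(2X)$, uniform in $T$, obtained on RH by writing $\int_{-X}^X F(\alpha)\,d\alpha=\big(\tfrac{T}{2\pi}\log T\big)^{-1}\sum_{0<\gamma,\gamma'\le T}w(\gamma-\gamma')\,\tfrac{2\sin(X(\gamma-\gamma')\log T)}{(\gamma-\gamma')\log T}$, estimating the summand by $w(\gamma-\gamma')\min\!\big(X,(|\gamma-\gamma'|\log T)^{-1}\big)$, and using $N(t+h)-N(t)\ll h\log t+1$; under \hyperref[AH2]{\textbf{AH-Pairs}} one may alternatively use that \Cref{thm1} bounds the diagonal contribution to $F(\alpha)$ by $P_0+o(1)=O(1)$ uniformly. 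A dyadic splitting of $[b,\infty)$ then yields a tail of size $O(b^{-1}\log b)$. Apart from securing this bound, the argument is routine, the only non-mechanical step being the appearance of the Wallis product.
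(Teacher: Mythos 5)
Your overall strategy matches the paper's: split $\int_1^\infty F(\alpha)\alpha^{-2}\,d\alpha$ into $\int_1^b+\int_b^\infty$, apply \Cref{thm4-r(k)p_k} on the finite piece, let $T\to\infty$ then $b\to\infty$ using a uniform tail bound, and finally evaluate $\int_1^\infty\mathcal{F}(\alpha)\alpha^{-2}\,d\alpha$. Your evaluation of the $\mathcal{F}$-integral — separating the sawtooth $s$ from the point masses and hitting the Wallis product — is organized a little differently from the paper, which groups terms over intervals $[2n-1,2n+1]$, but it is an equivalent computation and your arithmetic (including the recombination giving $\bigl(\tfrac32(p_0-1)+\tfrac14\bigr)\tfrac{\pi^2}{6}$ and both numerical specializations) checks out, including the correct interpretation of the mass $2(P_0-1)$ at $\alpha=1$ and the fact that $\delta_1^+$ is captured in full by $\int_1^\infty$.

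The one genuine gap is in your treatment of the tail. The paper disposes of $\int_U^\infty F(\alpha)\alpha^{-2}\,d\alpha$ by quoting Lemma~A of \cite{Goldston87}, which gives the uniform bound $\int_a^{a+1}F(\alpha)\,d\alpha\ll 1$ and hence a tail of size $O(U^{-1})$. You instead sketch an elementary derivation of $\int_0^X F(\alpha)\,d\alpha\ll X\log(2X)$ from $N(t+h)-N(t)\ll h\log t+1$ and the bound $|\text{summand}|\ll w(\gamma-\gamma')\min(X,(|\gamma-\gamma'|\log T)^{-1})$. This does not give what you claim: the dyadic scales with $(\log T)^{-1}\le|\gamma-\gamma'|\le 1$ each contribute $O(1)$ after normalization, and there are $\asymp\log\log T$ of them, so the elementary argument yields only $\int_0^X F(\alpha)\,d\alpha\ll X+\log\log T$, which is unbounded as $T\to\infty$ for fixed $X$ and so cannot be pushed through the double limit $\lim_{b\to\infty}\limsup_{T\to\infty}$. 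Lemma~A avoids this loss precisely by exploiting $F\ge 0$: one convolves with a nonnegative kernel (Fej\'er), compares to $\int\widehat{\Delta}_\delta(\alpha)F(\alpha)\,d\alpha$ via the triangle inequality on the zero-side, and invokes Montgomery's theorem to bound that fixed integral. Your alternative suggestion — that \Cref{thm1} controls ``the diagonal contribution to $F(\alpha)$'' by $P_0+o(1)$ — also does not by itself control the off-diagonal near-pairs that produce the $\log\log T$ loss. So the tail bound needs to be taken from Lemma~A (or re-proved via the Fej\'er-kernel positivity argument), not from zero-counting alone; once that is in place, the rest of your proof is correct and essentially identical to the paper's.
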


We conclude with an application of \hyperref[AH-density]{\textbf{AH-Density}}. 

\begin{theorem} \label{thm2-generalizedMTwith_p_k} Suppose both $r(t)$ and $\widehat{r}(\alpha)$ are even $L^1(\mathbb{R})$ functions with bounded variation, and suppose further that $r(\alpha)$ has support in $|\alpha|\le 1$. Then assuming \hyperref[AH-density]{AH-Density}, we have
\begin{equation} \sum_{k\in \mathbb{Z}} \widehat{r}(k/2)p_{k/2} = r(0) + 2 \int_0^1\alpha r(\alpha)\, d\alpha. \end{equation}
\end{theorem}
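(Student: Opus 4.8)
The plan is to substitute into the left‑hand side the explicit values of $p_{k/2}$ supplied by \hyperref[AH-density]{\textbf{AH-Density}}, regroup the coefficients of $\widehat{r}(k/2)$ according to the residue of $k$ modulo $2$, resum each group by Poisson summation, and check that the dependence on $p_0$ cancels while the surviving terms reassemble $r(0)+2\int_0^1\alpha r(\alpha)\,d\alpha$. Two preliminary observations are forced by the hypotheses. Since $\widehat{r}\in L^1(\mathbb{R})$, Fourier inversion shows that $r$ agrees almost everywhere with a continuous function; passing to that representative and using that $r$ is supported in $[-1,1]$ yields $r(\pm1)=0$. And since $\widehat{r}$ is in $L^1$ and of bounded variation, $\sum_{k\in\mathbb{Z}}|\widehat{r}(k/2)|<\infty$ (on each interval of length $\tfrac12$ the supremum of $|\widehat{r}|$ is at most a constant times its $L^1$-mass there plus its variation there); as the $p_{k/2}$ are bounded, the series defining the left‑hand side converges absolutely, and $r$ is regular enough for Poisson summation.

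With $\widehat{r}(t)=\int r(\alpha)e(-t\alpha)\,d\alpha$ as in \eqref{hat-g}, Poisson summation ($\sum_n f(n)=\sum_m\widehat{f}(m)$) and a rescaling give
\[
\sum_{k\in\mathbb{Z}}\widehat{r}(k/2)=2\sum_{n\in\mathbb{Z}}r(2n)=2r(0),
\qquad
\sum_{k\text{ even}}\widehat{r}(k/2)=\sum_{j\in\mathbb{Z}}\widehat{r}(j)=\sum_{n\in\mathbb{Z}}r(n)=r(0),
\]
the final equalities using $r(\pm1)=0$ and $\mathrm{supp}\,r\subset[-1,1]$; subtracting, $\sum_{k\text{ odd}}\widehat{r}(k/2)=r(0)$ too. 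Inserting the \hyperref[AH-density]{\textbf{AH-Density}} formula, the coefficient of $p_0$ in $\sum_k\widehat{r}(k/2)p_{k/2}$ equals $\sum_{k\text{ even}}\widehat{r}(k/2)-\sum_{k\text{ odd}}\widehat{r}(k/2)=0$, so $p_0$ drops out, and with $\sum_{k\ne0,\text{ even}}\widehat{r}(k/2)=r(0)-\widehat{r}(0)$ we are left with
\[
\sum_{k\in\mathbb{Z}}\widehat{r}(k/2)p_{k/2}
= -\tfrac12\bigl(r(0)-\widehat{r}(0)\bigr)+\tfrac32\,r(0)-\frac{2}{\pi^2}\sum_{k\text{ odd}}\frac{\widehat{r}(k/2)}{k^2}.
\]

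For the last sum, write $\widehat{r}(k/2)=2\int_0^1 r(\alpha)\cos(\pi k\alpha)\,d\alpha$ (valid since $r$ is even with support in $[-1,1]$), interchange with the absolutely convergent sum over odd $k$, and use the classical Fourier expansion $\sum_{j\ge0}(2j+1)^{-2}\cos\bigl((2j+1)\pi\alpha\bigr)=\tfrac{\pi^2}{8}-\tfrac{\pi^2}{4}\alpha$ on $[0,1]$, i.e.\ $\sum_{k\text{ odd}}k^{-2}\cos(\pi k\alpha)=\tfrac{\pi^2}{4}(1-2\alpha)$; this gives $\sum_{k\text{ odd}}\widehat{r}(k/2)/k^2=\tfrac{\pi^2}{2}\int_0^1 r(\alpha)(1-2\alpha)\,d\alpha$, so the last term equals $-\int_0^1 r(\alpha)\,d\alpha+2\int_0^1\alpha r(\alpha)\,d\alpha$. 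Since $\widehat{r}(0)=2\int_0^1 r(\alpha)\,d\alpha$, the contribution $\tfrac12\widehat{r}(0)$ coming from $-\tfrac12(r(0)-\widehat{r}(0))$ cancels $-\int_0^1 r(\alpha)\,d\alpha$, and the expression collapses to $r(0)+2\int_0^1\alpha r(\alpha)\,d\alpha$, as claimed.

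The only points needing care are the validity of Poisson summation for $r$ and of the sum–integral interchange, both routine given that $r,\widehat{r}\in L^1(\mathbb{R})$ have bounded variation and $r$ is continuous; the content is algebraic — the shape of the \hyperref[AH-density]{\textbf{AH-Density}} formula forces $p_0$ out, and the Fourier series converts the $k^{-2}$-weighted sum over odd half‑integers into precisely the linear term $2\int_0^1\alpha r(\alpha)\,d\alpha$ of \hyperref[lemsumpair]{\textbf{MT-Pairs}}, the constant $r(0)$ being supplied by the Poisson evaluations. I expect the cancellation of $p_0$ and the emergence of exactly the \hyperref[lemsumpair]{\textbf{MT-Pairs}} main term to be the conceptually important steps, with no serious obstacle otherwise.
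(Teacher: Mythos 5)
Your proof is correct and takes essentially the same route as the paper: Poisson summation yields $\sum_{k\text{ even}}\widehat{r}(k/2)=\sum_{k\text{ odd}}\widehat{r}(k/2)=r(0)$ so the $p_0$-dependence cancels, and a Fourier computation on $[-1,1]$ then identifies the remaining sum with $r(0)+2\int_0^1\alpha r(\alpha)\,d\alpha$. The only difference is cosmetic — you convert the $k^{-2}$-weighted odd sum to an integral via $\widehat{r}(k/2)=2\int_0^1 r(\alpha)\cos(\pi k\alpha)\,d\alpha$ and the sawtooth series $\sum_{k\text{ odd}}k^{-2}\cos(\pi k\alpha)=\tfrac{\pi^2}{4}(1-2\alpha)$, while the paper instead expands $r$ in its Fourier series on $[-1,1]$ and evaluates $\operatorname{Re}\int_0^1\alpha e(k\alpha/2)\,d\alpha$; your bookkeeping is actually slightly more careful, since you track the $k=0$ term (and its $\tfrac12\widehat{r}(0)$ contribution from $\operatorname{Re}\int_0^1\alpha\,d\alpha=\tfrac12$) explicitly, a step the paper's displayed chain compresses.
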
 

Combining this result with \Cref{AHsumlemma} we thus obtain a version of \hyperref[lemsumpair]{\textbf{MT-Pairs}} making use of \hyperref[AH2]{\textbf{AH-Pairs}} and \hyperref[AH-density]{\textbf{AH-Density}} but not RH or explicit formulas. One could view this as using a false assumption to prove a true theorem, but it also indicates that the conditions on \hyperref[AH-density]{\textbf{AH-Density}} that were imposed by \hyperref[lemsumpair]{\textbf{MT-Pairs}} and \hyperref[AH2]{\textbf{AH-Pairs}} using the special kernel in \Cref{lemg_n} are very close to optimal over a wide range of kernels.

\section{Proof of \Cref{thm1}}

\begin{proof}[Proof of {\hyperref[lemsumpair]{\textbf{MT-Pairs}}}] To prove \eqref{Sumpair}, letting 
$t= \frac{\gamma-\gamma'}{2\pi}\log T$ in \eqref{hat-g}, we have
\[
\widehat{g}\Big(\frac{\gamma-\gamma'}{2\pi}\log T\Big) = \int_{-\infty}^\infty g(\alpha)T^{i\alpha(\gamma'-\gamma)}\,d\alpha\,.
\]
Multiplying both sides of this equation by $w(\gamma-\gamma')$ and summing over $0<\gamma, \gamma'\le T$ we obtain
\begin{align*}
\sum_{0<\gamma,\gamma'\le T}\widehat{g}\Big(\frac{\gamma-\gamma'}{2\pi}\log T\Big)w(\gamma-\gamma')
&= \int_{-\infty}^\infty g(\alpha)\sum_{0<\gamma,\gamma' \le T } T^{i\alpha(\gamma'-\gamma)}w(\gamma-\gamma')\,d\alpha \notag \\
&= \left(\frac{T}{2\pi}\log T\right) \int_{-\infty}^\infty F(\alpha)g(\alpha)\,d\alpha ,
\end{align*}
since $w$ is even.
To obtain \eqref{pairsum}, we see by \hyperref[MonThm]{\textbf{MT}} that
\begin{align*}
\int_{-\infty}^\infty F(\alpha)r(\alpha)\,d\alpha &= 2\int_0^1 T^{-2\alpha}\log T\left(1+ O\left(\frac{1}{\sqrt{\log T}}\right)\right) g(\alpha)\, d\alpha \\
&\qquad+ 2\int_0^1\alpha g(\alpha)\, d\alpha +O\left(\frac{1}{\sqrt{\log T}}\right).
\end{align*}
Using the Lipschitz condition on $g(\alpha)$ at $\alpha=0$, we see that the first integral on the right-hand side is 
\[ \begin{split} &= 2\int_0^{\log\log T/\log T} T^{-2\alpha}(\log T +\sqrt{\log T}) g(\alpha) \, d\alpha+O\left( \int_{\log\log T/\log T}^1 T^{-2\alpha}\log T|g(\alpha)|\, d\alpha \right) \\&
= 2\left( g(0)+ O\left(\frac{\log\log T}{\log T}\right)\right) \int_0^{\log\log T/\log T}T^{-2\alpha}(\log T +\sqrt{\log T}) \, d\alpha \\
&\qquad+ O\left(\frac1{\log T}\int_0^1|g(\alpha )|\, d\alpha\right)\\&
= 2\left( g(0)+ O\left(\frac{\log\log T}{\log T}\right)\right)\left(\frac12 +O\left(\frac1{\sqrt{\log T}}\right)\right) + O\left(\frac1{\log T}\right) \\&
= g(0) + O\left(\frac{1}{\sqrt{\log T}}\right).
\end{split}\]
\end{proof}

To prove \Cref{sumerror}, we need some estimates for zeros. Note first from \eqref{N(T)} that 
\begin{equation*} 
N(t+1)-N(t) = \sum_{ t<\gamma \le t+1} 1\ll \log( |t|+2) , \end{equation*}
which easily implies (or see \cite[Chapter 15 p. 98]{Dav2000})
\begin{equation} \label{wSum} \sum_\gamma \frac{1}{1 + (t-\gamma)^2 } \ll \log (|t|+2) .\end{equation}
We will also need the following result from \cite[Lemma 9]{GM87}. 
If $0\le h\le T$, then 
\begin{equation} \label{zeropairbound}
|\{(\gamma,\gamma') : 0<\gamma\le T, |\gamma-\gamma'|\le h\}| = \sum_{\gamma'}\sum_{\substack{ 0< \gamma \le T, \\ |\gamma-\gamma'|\le h}} 1 \ll (1+h\log T) T\log T.
\end{equation}
One immediate consequence of the above estimate is that recalling $\mathcal{P}(T,M)$ from \eqref{P(T,M)}, we have on taking $h=M/\log T$ in \eqref{zeropairbound} that
\begin{equation} \label{P(T,M)bound}
\mathcal{P}(T,M)\ll MT\log T. \end{equation}
\begin{proof}[Proof of \Cref{sumerror}]
We first note that, on using \eqref{N(T)}, \eqref{wSum}, and that $r(t)\ll 1$,
\begin{equation} \label{GetP(T,M)}
\begin{aligned}
&\sum_{0<\gamma,\gamma'\leq T} r\left(\frac{\gamma-\gamma'}{2\pi}\log T\right)w(\gamma-\gamma') \\ 
&\quad= \left(\sum_{\frac{T}{\log^2T}<\gamma,\gamma'\leq T} +
\sum_{0<\gamma'\leq\frac{T}{\log^2T}} \sum_{0<\gamma\leq T} +
\sum_{\frac{T}{\log^2T}<\gamma'\leq T}\sum_{0<\gamma\leq\frac{T}{\log^2T}} \right) r\left(\frac{\gamma-\gamma'}{2\pi}\log T\right)w(\gamma-\gamma') \\
&\quad= \sum_{\frac{T}{\log^2T}<\gamma,\gamma'\leq T} r\left(\frac{\gamma-\gamma'}{2\pi}\log T\right)w(\gamma-\gamma') + O\left(\sum_{0<\gamma'\leq\frac{T}{\log^2T}} \sum_{0<\gamma\leq T} \frac1{4+(\gamma-\gamma')^2} \right) \\
&\quad= \sum_{\frac{T}{\log^2T}<\gamma,\gamma'\leq T} r\left(\frac{\gamma-\gamma'}{2\pi}\log T\right)w(\gamma-\gamma') + O\left(\sum_{0<\gamma'\leq\frac{T}{\log^2T}} \log T \right) \\
&\quad= \sum_{\frac{T}{\log^2T}<\gamma,\gamma'\leq T} r\left(\frac{\gamma-\gamma'}{2\pi}\log T\right)w(\gamma-\gamma') + O\left(T\right).
\end{aligned}
\end{equation}
The sum here is over $(\gamma,\gamma')\in\mathcal{P}(T,M)$ plus terms with $|(\gamma-\gamma')\log T/2\pi| >M$, which we may remove using \eqref{zeropairbound} with an error
\begin{align*}
&\ll \sum_{\substack{0<\gamma,\gamma'\leq T \\ M<\left| \frac{\gamma-\gamma'}{2\pi}\log T\right|}} \left| r\left( \frac{\gamma-\gamma'}{2\pi}\log T\right)\right| \\ &
\ll \sum_{\ell=0}^{\infty} \sum_{\substack{0<\gamma,\gamma'\leq T \\ 2^{\ell} M<\left| \frac{\gamma-\gamma'}{2\pi}\log T\right| \leq 2^{\ell+1}M }} \left| r\left( \frac{\gamma-\gamma'}{2\pi}\log T\right)\right| \\
& \ll \sum_{\ell=0}^{\infty} \frac{C(r)}{(2^{\ell}M)^b}\sum_{\substack{0<\gamma,\gamma'\leq T \\ \left| \gamma-\gamma'\right| \leq 2^{\ell+2}\pi M/\log T}} 1 \\
& \ll \sum_{\ell=0}^{\infty}\frac{C(r)}{(2^{\ell}M)^b}( 2^{\ell}M T\log T) \\&
\ll \frac{ C(r) T\log T}{M^{b-1}}.
\end{align*}
Thus we have proved \eqref{lem2a}.
By \eqref{P(T,M)bound} we can now remove the factor $w(\gamma-\gamma')$ in the sum on the right-hand side of \eqref{lem2a} with an error 
\[ \ll \max_{|(\gamma-\gamma')| \le 2\pi M/\log T}\left|\frac{4}{4+ (\gamma-\gamma')^2} -1\right||\mathcal{P}(T,M)| \ll \left(\frac{M}{\log T}\right)^2 MT\log T \ll \frac{M^3 T}{\log T} \ll T,\]
which proves the last statement in \Cref{sumerror}.
\end{proof}

\begin{proof}[Proof of \Cref{AHsumlemma}]
With $r(t)$ as in \hyperref[lemsumpair]{\textbf{MT-Pairs}} and \Cref{sumerror}, we have first
\begin{equation*}\begin{split}
|r(t+h)-r(t) |
&= \Bigg| \int_{-1}^{1} \hat{r}(\alpha) \Big(e((t+h)\alpha)-e(t\alpha)\Big)\,d\alpha\Bigg| \\
&\leq \max_{-1\leq\alpha\leq 1}\Big| e(h\alpha)-1 \Big| \int_{-1}^{1} |\hat{r}(\alpha)|\,d\alpha \\
&\ll \min\{1,|h|\}.
\end{split}\end{equation*}
Hence by \hyperref[AH2]{\textbf{AH-Pairs}}, for $(\gamma,\gamma')\in B_{k/2} \subset \mathcal{P}(T,M)$, 
\[ r\left(\frac{\gamma-\gamma'}{2\pi}\log T\right) = r\left(\frac{k}{2} + O((|k|+1)R(T))\right) = r(k/2) + O((|k|+1)R(T)).\]
Thus, the left-hand side of \eqref{AHrsum} is by \eqref{P(T,M)bound} equal to 
\[ \sum_{\substack{k\in \mathbb{Z} \\ |k|\ll M}} \Big( r(k/2) + O((|k|+1)R(T))\Big)|B_{k/2}| = \frac{T}{2\pi}\log T\sum_{\substack{k\in \mathbb{Z} \\ |k|\ll M}} r(k/2)P_{k/2} + O(M^2R(T)T\log T),\]
since 
\[ \sum_{\substack{k\in \mathbb{Z} \\ |k|\ll M}} B_{k/2} \ll \mathcal{P}(T,M)\ll M T\log T \]
by \eqref{P(T,M)bound}.
\end{proof}

\begin{proof}[Proof of Lemma \ref{lemg_n}] Since
\begin{align} \label{g_n}
g_n(\alpha) =
\begin{cases}
\displaystyle \frac{1+(-1)^{n+1}\cos(n\pi\alpha)}2, &|\alpha|\le1, \\[2mm]
0, &|\alpha|>1,
\end{cases}
\end{align}
we have by \eqref{hat-g}
\begin{align*}
\widehat{g}_n(t) 
&= \int_0^1 \left(1+(-1)^{n+1}\cos(n\pi\alpha)\right)) \cos(2\pi t\alpha) d\alpha \\
&= \frac{\sin(2\pi t)}{2\pi t} + \frac{(-1)^{n+1}}2 \int_0^1 \Big(\cos(\pi\alpha(n+2t))+\cos(\pi\alpha(n-2t))\Big) d\alpha \\
&= \frac{\sin(2\pi t)}{2\pi t} + \frac{(-1)^{n+1}}2 \left(\frac{\sin\pi(n+2u)}{\pi(n+2t)}+\frac{\sin\pi(n-2t)}{\pi(n-2t)}\right) \\
&= \frac{\sin(2\pi t)}{2\pi t} + \frac{(-1)^{n+1}}{2\pi} \left(\frac{(-1)^n\sin 2\pi t}{n+2t}-\frac{(-1)^n\sin 2\pi t}{n-2t}\right) \\
&= \frac{\sin(2\pi t)}{2\pi t} - \frac{\sin 2\pi t}{2\pi} \left(\frac1{n+2t}-\frac1{n-2t}\right) \\
&= \frac{\sin(2\pi t)}{2\pi t} \left( 1 + \frac{4t^2}{n^2-4t^2}\right) \\
&= \frac{\sin(2\pi t)}{2\pi t} \left(\frac{n^2}{n^2-4t^2}\right).
\end{align*}
Clearly both $g_n(t)$ and $\widehat{g}_n(\alpha)$ are even and $\widehat{g}_n(\alpha) \in L^{\!1}(\mathbb R)$.

We now prove \eqref{g_nbound}. Making use of $|(\sin x)/x|\le \min(1, 1/|x|)$, we see that for $0\le |t| \le n/4$ 
\[ |\widehat{g}_n(t)| \le \frac43 \min(1, 1/t) \ll \frac{1}{t +1}, \]
and for $|t|\ge n$ that 
\[|\widehat{g}_n(t)| \leq \frac{n^2}{6\pi t^3}.\]
Finally for $n/4< |t| <n$ we have 
\[ |\widehat{g}_n(t)| = \left|\frac{\sin\pi(2t-n)}{\pi(2t-n)}\right|\left| \frac{n^2}{2 t (2t+n)}\right| \le \frac{4}{3}.\]

To prove \eqref{hatg_nzero}, for $k\in \mathbb Z$, we clearly have $\widehat{g}_n(0)=1$ and $\widehat{g}_n(k/2) =0$ if $k\neq \pm n$. For $k=\pm n$, we have by L'H\^opital's rule 
\[ \widehat{g}_n(\pm n/2) = \pm \frac{n}{ \pi } \ \lim_{t\to \pm n/2} \frac{\sin( 2\pi t)}{n^2-4t^2} = \frac{(-1)^{n+1}}{2}.\]
\end{proof}
\begin{proof}[Proof of \Cref{thm1}] In \Cref{sumerror} and \Cref{AHsumlemma} we take $r(t) = \widehat{g}_n(t)$ from Lemma \ref{lemg_n}. We take $M$ large enough so that $n\ll M$ and note from \eqref{g_nbound} that we can take $C(r)=n^2$ and $b=3$. Thus, 
\[\begin{split}
&\sum_{0<\gamma,\gamma'\leq T} \widehat{g}_n\left(\frac{\gamma-\gamma'}{2\pi}\log T\right)w(\gamma-\gamma') \\
&\qquad= \left(\sum_{\substack{k\in \mathbb{Z} \\ |k|\ll M}} \widehat{g}_n(k/2)P_{k/2} + O\left(\frac{n^2}{M^2}\right)+ O\left(M^2R(T)\right) + O\left(\frac{1}{\log T}\right)\right) \frac{T}{2\pi}\log T \\
&\qquad= \left(P_0+ (-1)^{n+1}P_{n/2} + O\left(\frac{n^2}{M^2}\right)+O\left(M^2R(T)\right)+ O\left(\frac{1}{\log T}\right)\right) \frac{T}{2\pi}\log T.
\end{split}\]
By \hyperref[lemsumpair]{\textbf{MT-Pairs}} we have the sum on the left above is 
\begin{align*}
&= (1+o(1))\frac{T}{2\pi}\log T \left( 
g_n(0) + 2\int_0^1\alpha g_n(\alpha)\,d\alpha \right) \\
&= (1+o(1))\frac{T}{2\pi}\log T
\begin{cases}
\displaystyle \frac12, &\text{for $n$ even}, \\[2mm]
\displaystyle \frac32 -\frac{2}{\pi^2n^2}, &\text{for $n$ odd},
\end{cases}
\end{align*}
by an elementary calculation using \eqref{g_n}. 
We now take $T$ and then $M$ large in the last two equations and conclude 
\[ P_0+ (-1)^{n+1}P_{n/2} \sim
\begin{cases}
\displaystyle \frac12, &\text{when $n$ is even}, \\[2mm]
\displaystyle \frac32-\frac{2}{\pi^2n^2}, &\text{when $n$ is odd},
\end{cases} \]
proving \eqref{thm1b}. For \eqref{thm1a}, note when $n=1$ that $P_0+P_{1/2} \sim \frac32 -\frac{2}{\pi^2}$, and that $P_0\ge (\frac{T}{2\pi}\log T)^{-1}N(T) \ge 1+o(1)$ and $P_{1/2}\ge 0$. Thus
\[1+o(1)\le P_0 \le \frac32 -\frac{2}{\pi^2} -P_{1/2} + o(1)\le\frac32 -\frac{2}{\pi^2} +o(1), \] 
which completes the proof.
\end{proof}

\section{Proof of \Cref{thm3-simple_zeros}}

\begin{proof}[Proof of \Cref{thm3-simple_zeros}] Assuming RH, we obtain from \eqref{Msum1} with $\beta=1$ that
\begin{equation*} \sum_{0<\gamma,\gamma'\le T}\left(\frac{\sin\left((\gamma-\gamma')\log T\right)}{(\gamma-\gamma')\log T}\right) w(\gamma-\gamma') = \frac{T}{2\pi}\log T + O(T\sqrt{\log T}). \end{equation*}
By \eqref{GetP(T,M)}, we have, since $|\frac{\sin t}{t}|\le 1 $,
\[ \sum_{\frac{T}{\log^2 T} <\gamma,\gamma' \le T}\left(\frac{\sin\left((\gamma-\gamma')\log T\right)}{(\gamma-\gamma')\log T}\right) w(\gamma-\gamma') = \frac{T}{2\pi} \log T +O(T\sqrt{\log T}).\]
We now discard the terms with $|\gamma-\gamma'| > \mathcal{M}$ which by \eqref{zeropairbound} contributes an error 
\[ \begin{split} &\ll \sum_{\ell=0}^\infty \sum_{\substack{0 <\gamma,\gamma' \le T\\ 2^\ell\mathcal{M} <|\gamma-\gamma'|\le 2^{\ell+1} \mathcal{M}}} \frac{1}{(\gamma-\gamma')^3\log T}\\&
\ll \sum_{\ell=0}^\infty \frac{1}{2^{3\ell}\mathcal{M}^3\log T} \sum_{\substack{0 <\gamma,\gamma' \le T\\ |\gamma-\gamma'|\le 2^{\ell+1} \mathcal{M}}} 1 \\&
\ll \sum_{\ell=0}^\infty \frac{2^\ell\mathcal{M}T\log^2T}{2^{3\ell}\mathcal{M}^3\log T} \\&
\ll \frac{T\log T}{\mathcal{M}^2}.
\end{split}\]
Hence we have
\begin{equation} \label{ProofThm2eq}
\begin{aligned}
&\sum_{(\gamma,\gamma')\in \mathcal{Q}(T,\mathcal{M})}\left(\frac{\sin\left((\gamma-\gamma')\log T\right)}{(\gamma-\gamma')\log T}\right) w(\gamma-\gamma') \\
&\qquad= \frac{T}{2\pi} \log T + O(T\sqrt{\log T}) + O\left(\frac{T\log T}{\mathcal{M}^2}\right).
\end{aligned}
\end{equation}
By \hyperref[AH2Strong]{\textbf{Strong AH-Pairs}} and the comment on how $B_{k/2}$ in \eqref{Bk/2} is defined for $\mathcal{Q}(T,\mathcal{M})$, the sum above is
\begin{align*}
&= \sum_{(\gamma,\gamma')\in B_0}\left(\frac{\sin\left((\gamma-\gamma')\log T\right)}{(\gamma-\gamma')\log T}\right) w(\gamma-\gamma') \\
&\qquad+ O\left(\sum_{\substack{k\in \mathbb{Z} \\ 0< |k|\ll \mathcal{M}\log T}} \left(\frac{|\sin (\pi k + O((|k|+1)R(T))|}{\pi k + O((|k|+1)R(T))}\right) |B_{k/2}|\right).
\end{align*}
Since $(\sin x)/x = 1+O(x^2)$ if $x\to 0$ and $w(u) = 1 + O(u^2)$ if $u\to 0$, the first term above is 
\[ = (1+ O(R(T)^2))(1+ O(R(T)^2/\log^2T))|B_0| = (1+ O(R(T)^2))P_0\frac{T}{2\pi}\log T .\]
Using $|\sin(\pi k + x)|= |\sin x|\le |x|$, we bound the second term above by
\[ \ll R(T)|\mathcal{Q}(T,\mathcal{M})| \ll \mathcal{M}R(T)T\log^2T\]
where we used \eqref{zeropairbound}. Hence \eqref{ProofThm2eq} becomes 
\[ P_0\frac{T}{2\pi}\log T + O(\mathcal{M}R(T)T\log^2T) = \frac{T}{2\pi} \log T + O(T\sqrt{\log T}) + O\left(\frac{T\log T}{\mathcal{M}^2}\right)\] 
which implies
\[ P_0 = 1 + O\left(\frac{1}{\mathcal{M}^2}\right) + O\left(\frac1{\sqrt{\log T}}\right) + O(\mathcal{M}R(T)\log T). \]
Letting $T\to \infty$, we have $R(T)\log T \to 0$, and on taking $\mathcal{M}$ large we have
$p_0 = \lim_{T\to \infty}P_0 =1$.
\end{proof}

\section{Proof of a Lemma of Heath-Brown}

Following Heath-Brown \cite{Heath-Brown96}, define
for $\alpha\in\mathbb{R}$ and $\lambda>0$ 
\begin{equation} \label{G_lambda} G_\lambda(\alpha) := \frac{1}{\lambda^2} \int_{-\lambda}^{\lambda} \left(\lambda -|\beta| \right) F(\alpha+\beta)\, d\beta = \frac{1}{\lambda^2}\int_{-\infty}^\infty k_\lambda(\beta)F(\alpha+\beta)\, d\beta,
\end{equation}
where $k_\lambda(\beta)$ is the triangle function
\begin{equation} \label{triangle}
k_\lambda(\beta) := \max(\lambda -|\beta|,0).
\end{equation}
Making use of the Fej\'er kernel
\begin{equation} \label{Fejer} K_\lambda(y) := \left( \frac{\sin \frac{\lambda y}2}{\frac{\lambda y}2}\right)^2 = \frac{1}{\lambda^2}\int_{-\infty}^\infty k_\lambda(\beta) e^{iy\beta}\, d\beta , \end{equation}
we have from \eqref{G_lambda} that
\begin{equation} \label{G_lambda2}
G_\lambda(\alpha) = \left(\frac {T}{2\pi} \log T\right)^{-1} \sum_{0<\gamma, \gamma'\le T}T^{i\alpha(\gamma-\gamma')} K_\lambda((\gamma-\gamma')\log T) w(\gamma-\gamma').
\end{equation} 
To state our first results on $G_\lambda(\alpha)$, it is convenient to condense our main error terms in applying \hyperref[AH2]{\textbf{AH-Pairs}} into the form
\begin{equation} \label{E_G} E_G(\lambda, \alpha) := \frac1{\lambda^2 M} + (|\alpha|+1)M^2R(T) +\frac1{\log T},\end{equation}
where $M$, $T$, and $R(T)$ are from \hyperref[AH2]{\textbf{AH-Pairs}}.
\begin{lem}[Heath-Brown] \label{lem4-Gfunction} For $\alpha \in \mathbb{R}$ and $0<\lambda<1$ the function $G_\lambda(\alpha)$ is even and non-negative. Let $\lambda$ satisfy $0< \lambda \le 1/2$. Assuming the Riemann Hypothesis, we have
\begin{enumerate}[label={(\roman*)}]
\item \label{lem4-i} $\displaystyle G_\lambda(\alpha) = \frac{\lambda-|\alpha|}{\lambda^2} + O(\lambda) + O\left(\frac{1}{\lambda\sqrt{\log T}}\right)+O\left(\frac{1}{\lambda^2\log T}\right), \qquad \text{for}\qquad |\alpha| < \lambda$, \\
\item \label{lem4-ii} $\displaystyle G_\lambda(\alpha) = |\alpha| + O\left(\frac{1}{\sqrt{\log T}}\right) + O\left(\frac{1}{\lambda^2\log T}\right), \qquad \text{for} \qquad \lambda \le |\alpha|\le 1-\lambda$.
\end{enumerate}
Assuming \hyperref[AH2]{\textbf{AH-Pairs}}, we have
\begin{enumerate}[label={(\roman*)}]
\setcounter{enumi}{2}
\item \label{lem4-iii} $\displaystyle G_\lambda(\alpha) = \sum_{k\in \mathbb{Z}} e^{i\pi k\alpha}\left( \frac{\sin \frac{\lambda\pi k}{2}}{\frac{\lambda\pi k}{2}}\right)^2P_{k/2}+ O(E_G(\lambda, \alpha))$,
\end{enumerate}
and for $L\in \mathbb{Z}$,
\begin{enumerate}[label={(\roman*)}]
\setcounter{enumi}{3}
\item \label{lem4-iv} $\displaystyle G_\lambda(\alpha+2L) = G_{\lambda}(\alpha) + O(E_G(\lambda,|\alpha|+2L)).$
\end{enumerate}
\end{lem}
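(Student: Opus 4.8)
The plan is to establish the four displayed statements in order, since part \ref{lem4-iii} feeds directly into part \ref{lem4-iv} and parts \ref{lem4-i}--\ref{lem4-ii} are Heath-Brown's averaging of Montgomery's theorem. \emph{Evenness and non-negativity:} evenness is immediate from \eqref{G_lambda} after substituting $\beta\mapsto-\beta$ and using that $F$ and $k_\lambda$ are even. For non-negativity I would use \eqref{G_lambda2}, substituting \eqref{Fejer} to write $K_\lambda((\gamma-\gamma')\log T)=\lambda^{-2}\int k_\lambda(u)e^{i(\gamma-\gamma')u\log T}\,du$ together with the elementary identity $w(v)=\int_{-\infty}^{\infty}e^{-2|r|}e^{irv}\,dr$; after exchanging the sum and integrals the inner double sum over zeros becomes $\big|\sum_{0<\gamma\le T}e^{i\gamma(\alpha\log T+u\log T+r)}\big|^{2}\ge 0$, and since $k_\lambda\ge 0$ and $e^{-2|r|}\ge 0$ this gives $G_\lambda(\alpha)\ge 0$ with no hypothesis.

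For \ref{lem4-i} and \ref{lem4-ii} I would work from the convolution form in \eqref{G_lambda}, namely $G_\lambda(\alpha)=\lambda^{-2}\int_{-\lambda}^{\lambda}(\lambda-|\beta|)F(\alpha+\beta)\,d\beta$. Since $\lambda\le 1/2$, for $|\alpha|\le 1-\lambda$ (which covers both ranges) the interval $[\alpha-\lambda,\alpha+\lambda]$ lies in $[-1,1]$, so on the range of integration I may insert \hyperref[MonThm]{\textbf{MT}} in the shape $F(u)=T^{-2|u|}\log T\,(1+O(1/\sqrt{\log T}))+|u|+O(1/\sqrt{\log T})$. The term $|u|$ contributes $\lambda^{-2}\int_{-\lambda}^{\lambda}(\lambda-|\beta|)|\alpha+\beta|\,d\beta$, which equals exactly $|\alpha|$ when $|\alpha|\ge\lambda$ (the odd part integrating to zero) and is $O(\lambda)$ when $|\alpha|<\lambda$ (there $|\alpha+\beta|<2\lambda$ on the support); the additive $O(1/\sqrt{\log T})$ contributes $O(1/\sqrt{\log T})$. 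The spike $T^{-2|u|}\log T$ has total mass $1$ and first absolute moment $O(1/\log T)$, so since $k_\lambda$ is $1$-Lipschitz it contributes $\lambda^{-2}k_\lambda(-\alpha)+O(\lambda^{-2}/\log T)$; for $|\alpha|<\lambda$ this is $\lambda^{-2}(\lambda-|\alpha|)+O(\lambda^{-2}/\log T)$, and after the $(1+O(1/\sqrt{\log T}))$ factor this becomes $\frac{\lambda-|\alpha|}{\lambda^{2}}+O(1/(\lambda\sqrt{\log T}))+O(1/(\lambda^{2}\log T))$, while for $|\alpha|\ge\lambda$ one has $k_\lambda(-\alpha)=0$ and only $O(\lambda^{-2}/\log T)$ survives. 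Collecting terms yields \ref{lem4-i} and \ref{lem4-ii}.

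For \ref{lem4-iii} I would start from \eqref{G_lambda2} written as $\frac{T}{2\pi}\log T\cdot G_\lambda(\alpha)=\sum_{0<\gamma,\gamma'\le T}r(\tfrac{\gamma-\gamma'}{2\pi}\log T)w(\gamma-\gamma')$ with $r(t)=e(\alpha t)K_\lambda(2\pi t)$. Since $|r(t)|\le\min(1,(\lambda\pi t)^{-2})$, this $r$ satisfies the hypotheses of \Cref{sumerror} with $C(r)\asymp\lambda^{-2}$ and $b=2$, so (also deleting $w$ by the last assertion of \Cref{sumerror}) we may restrict the sum to $\mathcal{P}(T,M)$ at a cost of $O(1/(\lambda^{2}M))+O(1/\log T)$ after dividing by $\frac{T}{2\pi}\log T$. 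On each block $B_{k/2}$, \hyperref[AH2]{\textbf{AH-Pairs}} gives $\tfrac{\gamma-\gamma'}{2\pi}\log T=\tfrac{k}{2}+O((|k|+1)R(T))$; combining this with $|e(\theta_{1})-e(\theta_{2})|\ll|\theta_{1}-\theta_{2}|$ and the fact that $K_\lambda$ is Lipschitz uniformly in $\lambda$ (because $K_\lambda(y)=\phi(\lambda y/2)$ with $\phi(x)=(\sin x/x)^{2}$ having bounded derivative and $\lambda<1$), I obtain $r(\tfrac{\gamma-\gamma'}{2\pi}\log T)=e^{i\pi k\alpha}\big(\tfrac{\sin(\lambda\pi k/2)}{\lambda\pi k/2}\big)^{2}+O((|\alpha|+1)(|k|+1)R(T))$. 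Summing over $B_{k/2}$ and then over $|k|\ll M$, the main term is $\frac{T}{2\pi}\log T\sum_{|k|\ll M}e^{i\pi k\alpha}\big(\tfrac{\sin(\lambda\pi k/2)}{\lambda\pi k/2}\big)^{2}P_{k/2}$ and the error is $O\big((|\alpha|+1)R(T)\sum_{|k|\ll M}(|k|+1)|B_{k/2}|\big)\ll(|\alpha|+1)M^{2}R(T)\,T\log T$ by \eqref{P(T,M)bound}. Finally I would extend the $k$-sum to all of $\mathbb{Z}$: the tail is $\ll\lambda^{-2}\sum_{|k|\gg M}k^{-2}P_{k/2}\ll\lambda^{-2}M^{-1}$, using the uniform bound $P_{k/2}\ll 1$, which follows from \eqref{zeropairbound} since $B_{k/2}$ counts pairs with $|\gamma-\gamma'|$ confined to an interval of length $\le\pi\delta/\log T\le\pi/(2\log T)$. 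Assembling all errors into $E_G(\lambda,\alpha)$ gives \ref{lem4-iii}. Part \ref{lem4-iv} is then immediate: since $e^{i\pi k(\alpha+2L)}=e^{i\pi k\alpha}$ for $k,L\in\mathbb{Z}$, the main term of \ref{lem4-iii} is unchanged under $\alpha\mapsto\alpha+2L$, so subtracting the two instances of \ref{lem4-iii} leaves $O(E_G(\lambda,|\alpha+2L|))+O(E_G(\lambda,|\alpha|))$, and both are $\le E_G(\lambda,|\alpha|+2L)$ by monotonicity of $E_G$ in its second argument.

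I expect the bookkeeping in \ref{lem4-iii} to be the crux: one must ensure the Lipschitz bound for $K_\lambda$ is genuinely uniform in $\lambda$, have the uniform bound $P_{k/2}\ll 1$ available to control the tail of the $k$-sum, and check that every error collapses exactly into the three pieces of $E_G(\lambda,\alpha)$. In \ref{lem4-i}--\ref{lem4-ii} the only delicate point is the behaviour of the spike $T^{-2|u|}\log T$ against the triangle kernel when $|\alpha|$ is close to $\lambda$.
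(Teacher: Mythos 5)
Your proposal is correct and follows essentially the same path as the paper: evenness and non-negativity are elementary, parts \ref{lem4-i}--\ref{lem4-ii} come from \hyperref[MonThm]{\textbf{MT}} applied inside the triangle-kernel convolution, part \ref{lem4-iii} comes from \Cref{sumerror} together with \hyperref[AH2]{\textbf{AH-Pairs}} via a Lipschitz estimate for $e(\alpha t)K_\lambda(2\pi t)$, and part \ref{lem4-iv} is immediate from \ref{lem4-iii}. The only cosmetic differences are that you re-derive non-negativity by expanding $w$ as a Fourier transform and exhibiting a perfect square (the paper just cites $F\ge 0$, which is proved the same way), and in \ref{lem4-i}--\ref{lem4-ii} you replace the paper's exact evaluation of the $T^{-2|\alpha+\beta|}$ integrals with a total-mass/first-moment estimate against the $1$-Lipschitz kernel $k_\lambda$, which gives the identical error terms.
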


Assuming RH and \hyperref[AH2]{\textbf{AH-Pairs}}, \Cref{lem4-Gfunction} determines $G_\lambda(\alpha)$ for all $\alpha$ except when $\alpha$ is an odd integer. We will determine $G_{\lambda}(2L+1)$ asymptotically in \Cref{lem5-forThm4} \ref{lem5-v}, but it is easy to prove here as a first step the following average result. 
\begin{corollary} \label{Cor5-G(1)ave} Assuming the Riemann Hypothesis and \hyperref[AH2]{AH-Pairs}, we have, for $0<\lambda \le 1/4$,
\begin{equation} \label{G(1)ave} \int_{1-\lambda}^{1+\lambda}G_{\lambda}(\alpha) \, d\alpha = 2(P_0-1) + O(\lambda) + O( E_G(\lambda,1)) +O\left(\frac{1}{\lambda^2\sqrt{\log T}}\right).
\end{equation} 
\end{corollary}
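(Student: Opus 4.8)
The plan is to integrate the Fourier expansion in part \ref{lem4-iii} of \Cref{lem4-Gfunction} term by term over $[1-\lambda,1+\lambda]$, the interval symmetric about the odd integer $1$. Writing $c_k(\lambda):=\big(\tfrac{\sin(\lambda\pi k/2)}{\lambda\pi k/2}\big)^2$ and $J_k:=\int_{1-\lambda}^{1+\lambda}e^{i\pi k\alpha}\,d\alpha$, so that $J_0=2\lambda$ and $J_k=(-1)^k\tfrac{2\sin(\pi k\lambda)}{\pi k}$ for $k\neq0$, part \ref{lem4-iii} gives
\[
\int_{1-\lambda}^{1+\lambda}G_\lambda(\alpha)\,d\alpha=\sum_{k}c_k(\lambda)\,P_{k/2}\,J_k+O\big(\lambda\,E_G(\lambda,1)\big),
\]
the error coming from integrating $O(E_G(\lambda,\alpha))$ over a length-$2\lambda$ interval on which $E_G(\lambda,\alpha)\asymp E_G(\lambda,1)$. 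Since $B_{k/2}=\varnothing$ once $|k/2|>M+\delta/2$ the sum is really finite, and as $|c_k(\lambda)J_k|\ll\lambda^{-2}|k|^{-3}$ its tail past $|k|\gg M$ is $O(\lambda^{-2}M^{-2})$, which sits inside $E_G(\lambda,1)$.

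To pull out the main term I would substitute the values from \Cref{thm1}, conveniently packaged as $P_{k/2}=\tfrac{1}{2}+(-1)^k(P_0-1)+\tfrac{1}{2}\mathbf{1}_{k=0}-\tfrac{2}{\pi^2k^2}\mathbf{1}_{k\text{ odd}}+\mathrm{err}_k$, where $\mathrm{err}_k=o(1)$ for each fixed $k$ and $|\mathrm{err}_k|\ll1$. The sum then splits into four pieces. The constant piece $\tfrac{1}{2}\sum_k c_k(\lambda)J_k=\tfrac{1}{2}\int_{1-\lambda}^{1+\lambda}\tfrac{2}{\lambda^2}\sum_n k_\lambda(\alpha-2n)\,d\alpha$ vanishes, because the Fej\'er comb $\tfrac{2}{\lambda^2}\sum_n k_\lambda(\alpha-2n)$ is supported near the even integers and $[1-\lambda,1+\lambda]$ avoids them when $\lambda\le1/4$. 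The piece $(P_0-1)\sum_k(-1)^kc_k(\lambda)J_k=(P_0-1)\int_{1-\lambda}^{1+\lambda}\tfrac{2}{\lambda^2}\sum_n k_\lambda(\alpha+1-2n)\,d\alpha$ equals exactly $2(P_0-1)$: the $(-1)^k$-twist moves the comb onto the odd integers, and $[1-\lambda,1+\lambda]$ captures precisely the full bump $\tfrac{2}{\lambda^2}k_\lambda(\alpha-1)$, of integral $2$. This is exactly how the mass-$2(P_0-1)$ delta that $\mathcal{F}$ carries at $\alpha=1$ (cf.\ \eqref{Fin[0,2]}) enters. The last two pieces, $\tfrac{1}{2}c_0(\lambda)J_0=\lambda$ and $-\tfrac{2}{\pi^2}\sum_{k\text{ odd}}\tfrac{c_k(\lambda)J_k}{k^2}$, are $O(\lambda)$ from $|c_k(\lambda)J_k|\le2\lambda$ and $\sum_{k\text{ odd}}k^{-2}=\pi^2/4$.

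The hard part is the leftover error $\sum_{|k|\ll M}c_k(\lambda)J_k\,\mathrm{err}_k$: bounding it by the crude $\max_k|\mathrm{err}_k|\ll1$ over the $\asymp M$ surviving terms is too lossy. Instead one should feed in the explicit intermediate estimate behind \Cref{thm1} (ultimately \hyperref[MonThm]{MT} through \hyperref[lemsumpair]{MT-Pairs} together with \hyperref[AH2]{AH-Pairs}), namely $\mathrm{err}_k\ll k^2/M^2+M^2R(T)+(\log T)^{-1/2}$, and combine it with the two weighted bounds $\sum_k|c_k(\lambda)J_k|\ll1$ and $\sum_k|c_k(\lambda)J_k|\,k^2\ll\lambda^{-2}\log M$ (both obtained by splitting at $|k|\sim1/\lambda$); this turns the error sum into $O(E_G(\lambda,1))+O\big(\lambda^{-2}(\log T)^{-1/2}\big)$, and assembling the pieces gives \eqref{G(1)ave}. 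As a cross-check making the constant $2(P_0-1)$ especially transparent, one can instead use \eqref{Sumpair} of \hyperref[lemsumpair]{MT-Pairs} to write $\int_{1-\lambda}^{1+\lambda}G_\lambda(\alpha)\,d\alpha=\int_{\mathbb{R}}F(\alpha)g(\alpha)\,d\alpha$ for the explicit even kernel $g$ with $\widehat g(t)=\cos(2\pi t)\,\tfrac{\sin(2\pi\lambda t)}{\pi t}\big(\tfrac{\sin(\pi\lambda t)}{\pi\lambda t}\big)^2$ — a bump of height $1$ at $\alpha=\pm1$ supported in $|\alpha|\le1+2\lambda$ — so that $\int Fg$ picks up the mass-$2(P_0-1)$ delta of $\mathcal{F}$ at $1$ with weight $g(1)=1$, the range $|\alpha|\le1$ being handled by \hyperref[MonThm]{MT} (its $T^{-2\alpha}\log T$ term is $O(1/T)$ since $\alpha$ is bounded away from $0$).
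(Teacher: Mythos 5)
Your proof is correct, but it takes a genuinely different route from the paper's. The paper integrates $G_\lambda$ over the full period $[0,2]$: by part (iii), only the $k=0$ Fourier coefficient survives the orthogonality relation $\int_0^2 e^{i\pi k\alpha}\,d\alpha = 2\cdot\mathbf{1}_{k=0}$, yielding $\int_0^2 G_\lambda = 2P_0 + O(E_G(\lambda,1))$ in one stroke; it then decomposes $[0,2]$ into $[-\lambda,\lambda]$, two copies of $[\lambda,1-\lambda]$, and $[1-\lambda,1+\lambda]$ (using evenness and the period-$2$ relation in (iv)), evaluates the first two via (i) and (ii) under RH, and solves for the remaining piece. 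You instead integrate the Fourier series from (iii) directly over $[1-\lambda,1+\lambda]$, substitute the densities $P_{k/2}$ via the decomposition from Theorem~\ref{thm1}, and identify the main term $2(P_0-1)$ by recognizing that the $(-1)^k$-twist relocates the Fej\'er comb $\frac{2}{\lambda^2}\sum_n k_\lambda(\alpha-2n)$ onto the odd integers. Both arguments are valid; yours makes the appearance of $2(P_0-1)$ conceptually transparent as a bump of total mass $2$ weighted by $P_0-1$, which is a nice feature. The trade-off is that your route requires a \emph{quantitative} version of Theorem~\ref{thm1} — the explicit rate $\mathrm{err}_k\ll k^2/M^2 + M^2R(T) + (\log T)^{-1/2}$ — which is indeed present in its proof but not in its asymptotic statement, whereas the paper's argument never re-opens Theorem~\ref{thm1} and leans only on Lemma~\ref{lem4-Gfunction} parts (i)--(iv). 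You correctly flag this as the delicate point, and your two weighted bounds $\sum_k|c_k J_k|\ll 1$ and $\sum_{|k|\ll M}|c_k J_k|k^2\ll\lambda^{-2}\log M$ do close the gap, with the tails past $|k|\gg M$ correctly absorbed into $E_G(\lambda,1)$.
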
 

\begin{proof}[Proof of \Cref{lem4-Gfunction}] $G_\lambda(\alpha)$ is even since in \eqref{G_lambda2} $\gamma$ and $\gamma'$ can be interchanged in the sum, and $G_\lambda(\alpha)\ge 0$ on using $F \ge 0$ in \eqref{G_lambda}. 

For \ref{lem4-i}, by the evenness of $G_\lambda(\alpha)$, we only need to consider the range $0\le \alpha <\lambda\le 1/2$. Then by \hyperref[MonThm]{\textbf{MT}} we have from \eqref{G_lambda} that
\begin{equation} \label{lem4proof}
\begin{aligned}
G_\lambda(\alpha) &= \left(1+O\left(\frac{1}{\sqrt{\log T}}\right)\right) \\
&\qquad\times\left(\frac{1}{\lambda^2} \int_{-\lambda}^{\lambda} (\lambda-|\beta|) T^{-2|\alpha +\beta|}\log T\, d\beta + \frac{1}{\lambda^2} \int_{-\lambda}^{\lambda} (\lambda-|\beta|)|\alpha+\beta| \, d\beta\right)
+ O\left(\frac{1}{\sqrt{\log T}}\right).
\end{aligned}
\end{equation}
Note since $|\beta|\le \lambda$, we have that $|\alpha +\beta |\le 2\lambda \le 1$ as required in \hyperref[MonThm]{\textbf{MT}}.
The first integral in \eqref{lem4proof} is
\[= \frac{\log T}{\lambda^2} \left( \int_{-\lambda}^{-\alpha} (\lambda+\beta) T^{2(\alpha+\beta)}\, d\beta + \int_{-\alpha}^{0} (\lambda+\beta) T^{-2(\alpha +\beta)}\, d\beta + \int_{0}^{\lambda} (\lambda -\beta) ) T^{-2(\alpha +\beta)}\, d\beta \right) .
\]
The integrals here are all elementary and we obtain that this expression is
\[ = \frac{\log T}{\lambda^2} \left(\frac{\lambda-\alpha}{\log T} + \frac{T^{-2(\lambda-\alpha)} - 2T^{-2\alpha} + T^{-2(\lambda+\alpha)}}{4\log ^2T}\right)
= \frac{\lambda-\alpha}{\lambda^2} + O\left(\frac{1}{\lambda^2\log T}\right). \]

The second integral in \eqref{lem4proof} is
\[ \ll \frac{1}{\lambda^2} \int_{-\lambda}^\lambda \lambda^2\, d\beta \ll \lambda, \]
and substituting these results into \eqref{lem4proof} proves \ref{lem4-i}. 
 
For \ref{lem4-ii}, by evenness we only need to consider the range $\lambda \le \alpha\le 1-\lambda$. By \hyperref[MonThm]{\textbf{MT}} and \eqref{G_lambda}
\[ 
G_\lambda(\alpha) = \left(1+O\left(\frac{1}{\sqrt{\log T}}\right)\right)\left(\frac{1}{\lambda^2}\int_{-\lambda}^{\lambda} (\lambda -|\beta| ) \left(T^{-2(\alpha +\beta)}\log T + (\alpha +\beta)\right) \, d\beta\right)+O\left(\frac{1}{\sqrt{\log T}}\right), \]
where we note \hyperref[MonThm]{\textbf{MT}} applies here because $0\le \alpha +\beta \le 1$.
Since
\[\begin{split}
\frac{1}{\lambda^2}\int_{-\lambda}^{\lambda} (\lambda-|\beta|)T^{-2(\alpha+\beta)}\log T \,d\beta
&= \frac{T^{-2\alpha}\log T}{\lambda^2}\left(\int_{-\lambda}^0 (\lambda+\beta)T^{-2\beta} \,d\beta + \int_0^{\lambda} (\lambda-\beta)T^{-2\beta} \,d\beta\right) \\
&= \frac{T^{-2(\alpha-\lambda)} - 2T^{-2\alpha} + T^{-2( \alpha+\lambda)}}{4\lambda^2\log T}
= O\left(\frac{1}{\lambda^2\log T}\right),
\end{split} \] 
and
\[\frac{1}{\lambda^2} \int_{-\lambda}^{\lambda} (\lambda-|\beta|)(\alpha+\beta) \,d\beta
= \alpha \left(\frac{2}{\lambda^2} \int_0^{\lambda} (\lambda-\beta) \,d\beta\right) = \alpha, \] 
we obtain \ref{lem4-ii}.

For \ref{lem4-iii}, we note $K_\lambda(y) \ll \min(1, 1/(\lambda y)^2)$ and apply \Cref{sumerror} with $r(y/2\pi) = e^{i\alpha y}K_\lambda(y)$, $b=2$ and $C(K_\lambda)= 1/\lambda^2$. Thus by \eqref{G_lambda2}
\[ G_\lambda(\alpha) = \left(\frac {T}{2\pi} \log T\right)^{-1}\sum_{(\gamma,\gamma')\in \mathcal{P}(T,M)} T^{i\alpha (\gamma-\gamma')}K_\lambda((\gamma-\gamma')\log T) + O\left(\frac{1}{\lambda^2 M}\right) + O\left(\frac{1}{\log T}\right).\]
Now as in the proof of \Cref{AHsumlemma}, 
\[\begin{split} \left|K_\lambda(y+h) -K_\lambda(y)\right| &= \left| \frac{1}{\lambda^2}\int_{-\lambda}^{\lambda} \left(\lambda -|\beta| \right) \left(e^{i(y+h)\beta}-e^{iy\beta}\right)\, d\beta \right| \\
&\le \max_{|\beta|\le \lambda}\left|e^{ih\beta}-1\right| \frac{1}{\lambda^2}\int_{-\lambda}^{\lambda} \left(\lambda-|\beta|\right)\, d\beta \\
&\ll \min\{1,\lambda h\}. \end{split}\]
By \hyperref[AH2]{\textbf{AH-Pairs}}, we have, for $|(\gamma-\gamma')\log T|\ll M$,
\[ \begin{split} e^{i\alpha(\gamma-\gamma')\log T}K_\lambda((\gamma-\gamma')\log T) &= e^{i\alpha(k\pi + O((|k|+1)R(T))} K_\lambda (k\pi + O((|k|+1)R(T))) \\
&= e^{i\pi k \alpha}\Big(1+ O\bigl(|\alpha|(|k|+1)R(T)\bigr)\Big) \\
&\qquad\times\Big(K_\lambda(k\pi) + O\bigl(\min\{1, \lambda(|k|+1)R(T)\}\bigr)\Big) \\
&= e^{i\pi k \alpha}K_\lambda(k\pi) + O((|\alpha|+\lambda)(|k|+1)R(T)).
\end{split} \]
Thus
\[ G_\lambda(\alpha) = \sum_{\substack{k\in \mathbb{Z} \\ k\ll M}} e^{i\pi k\alpha}\left(\frac{\sin \frac{\lambda\pi k}{2}}{\frac{\lambda\pi k}{2}}\right)^2P_{k/2} + O\left(\frac1{\lambda^2 M}\right)+ O((|\alpha|+ \lambda)M^2R(T)) +O\left(\frac1{\log T}\right). \]
We now extend the summation to all $k\in \mathbb{Z}$ with an error $O(1/\lambda^2M)$, and for simplicity degrade slightly the middle error term to $O((|\alpha|+1)M^2R(T))$. We have defined in \eqref{E_G} these error terms as $E_G(\lambda,\alpha)$ and have obtained \ref{lem4-iii}. 

Clearly \ref{lem4-iv} follows immediately from \ref{lem4-iii}.

\end{proof}

\begin{proof}[Proof of \Cref{Cor5-G(1)ave}] Assuming \hyperref[AH2]{\textbf{AH-Pairs}}, we note for $k\in \mathbb{Z}$ that 
$ \int_0^2 e^{i\pi k \alpha}\, d\alpha = 2 \cdot \1_{k=0} $, and 
therefore from \Cref{lem4-Gfunction} \ref{lem4-iii}
\begin{equation} \label{Cor5-1} \int_0^2 G_\lambda(\alpha)\, d\alpha = 2P_0 + O(E_G(\lambda, 1)).\end{equation}
On the other hand, 
\begin{equation} \begin{split} \label{Cor5-2}
\int_0^2 G_\lambda(\alpha)\, d\alpha &= \left( \int_0^\lambda +\int_\lambda^{1-\lambda}+ \int_{1-\lambda}^{1+\lambda}+\int_{1+\lambda}^{2-\lambda}+\int_{2-\lambda}^2\right)G_\lambda(\alpha)\, d\alpha \\&
= \left( \int_{-\lambda}^\lambda + 2\int_\lambda^{1-\lambda} + \int_{1-\lambda}^{1+\lambda}\right)G_\lambda(\alpha)\, d\alpha + O(E_G(\lambda,1)),
\end{split}\end{equation}
where we have used the evenness and periodicity of $G_\lambda(\alpha)$ from \Cref{lem4-Gfunction} \ref{lem4-iv}.
From \Cref{lem4-Gfunction} \ref{lem4-i} we have assuming RH
\begin{equation} \begin{split} \label{Cor5-3} \int_{-\lambda}^\lambda G_\lambda(\alpha)\, d\alpha &=\int_{-\lambda}^\lambda \frac{\lambda-|\alpha|}{\lambda^2}\, d\alpha + O(\lambda^2)+ O\left(\frac{1}{\sqrt{\log T}}\right)+O\left(\frac{1}{\lambda\log T}\right) \\&
= 1+ O(\lambda^2)+ O\left(\frac{1}{\sqrt{\log T}}\right)+O\left(\frac{1}{\lambda\log T}\right) .
\end{split}\end{equation}
From \Cref{lem4-Gfunction} \ref{lem4-ii} and assuming RH
\begin{equation} \begin{split} \label{Cor5-4} \int_{\lambda}^{1-\lambda} G_\lambda(\alpha)\, d\alpha &= \int_{\lambda}^{1-\lambda}\left(\alpha + O\left(\frac{1}{\sqrt{\log T}}\right) +O\left(\frac{1}{\lambda^2\log T}\right)\right)\, d\alpha \\
&=\frac12 - \lambda + O\left(\frac{1}{\sqrt{\log T}}\right) + O\left(\frac{1}{\lambda^2\log T}\right).
\end{split}\end{equation}
Using \eqref{Cor5-1}--\eqref{Cor5-4} and for simplicity slightly degrading the error terms proves \Cref{Cor5-G(1)ave}.
\end{proof}

\section{Average of $F(\alpha)$ over short intervals}

Notice that $G_\lambda(\alpha)$ in \Cref{lem4-Gfunction} is a smoothed average of $F(\alpha)$ over short intervals. We now prove a lemma that makes the transition to the unweighted average of $F(\alpha)$ over short intervals. Let $[\alpha]$ denote the integer part of $\alpha$, and $\| \alpha \|$ denote the distance from $\alpha$ to the closest integer. Also recall the sawtooth function $s(\alpha)$ from \eqref{s(alpha)}.
 
\begin{lem} \label{lem5-forThm4} Assume the Riemann Hypothesis and \hyperref[AH2]{AH-Pairs}. Then for $L\in \mathbb{Z}$ and $0<\lambda \le 1/4$ we have
 \begin{enumerate}[label={(\roman*)}]
\item \label{lem5-i} $\displaystyle \int_{2L-\lambda}^{2L+\lambda} F(\beta)\, d\beta = 1 + O(\lambda^2)+ O(\lambda E_G(\lambda,L))+ O\left(\frac{1}{\lambda\sqrt{\log T}}\right) $.\\
\end{enumerate}
For any fixed $\alpha \in \mathbb{R}\setminus \mathbb{Z}$, choose $\lambda$ so that $0<\lambda\le \frac12 \| \alpha \|$. Then 
$[\alpha] +2\lambda \le \alpha \le [\alpha]+ 1-2\lambda$, and we have that
\begin{enumerate}[label={(\roman*)}]
\setcounter{enumi}{1}
\item \label{lem5-ii} $\displaystyle \frac{1}{2\lambda}\int_{\alpha-\lambda}^{\alpha +\lambda} F(\beta)\, d\beta = s(\alpha) +O(\lambda) +O\left(\frac{1}{\lambda^5\sqrt{\log T}}\right)+O(\frac{1 }{\lambda}E_G(\lambda, L)) $.\\
\end{enumerate}
Let $K=2L+1$ be an odd integer. Then for $0<\lambda\le 1/4$ we have
\begin{enumerate}[label={(\roman*)}]
\setcounter{enumi}{2}
\item \label{lem5-iii} $\displaystyle \int_{K-\lambda}^{K+\lambda}F(\beta) \,d\beta = 2(P_0-1) + O(\lambda) + O( E_G(\lambda^2,K)) + O\left(\frac{1}{\lambda^4\sqrt{\log T}}\right)$.
\end{enumerate}
If $K=\pm1$, then in addition to \ref{lem5-iii} we have for $0<\lambda\le 1/4$ 
\begin{enumerate}[label={(\roman*)}]
\setcounter{enumi}{3}
\item \label{lem5-iv} $\displaystyle \int_{1-\lambda}^{1+\lambda}F(\beta) \, d\beta = \int_{1}^{1+\lambda}F(\beta) \,d\beta + O(\lambda) ~~\text{and}~~
\int_{-1-\lambda}^{-1+\lambda}F(\beta) \,d\beta = \int_{-1-\lambda}^{-1}F(\beta) \, d\beta + O(\lambda)$. 
\end{enumerate}
For $K$ an odd integer,
\begin{enumerate}[label={(\roman*)}]
\setcounter{enumi}{4}
\item \label{lem5-v} $\displaystyle G_\lambda(K) = \frac{2(P_0-1)}{\lambda} + O(1) + O\left(\frac{E_G(\lambda^2,K)}{\lambda}\right) + O\left(\frac{1}{\lambda^5\sqrt{\log T}}\right)$.
\end{enumerate}
\end{lem}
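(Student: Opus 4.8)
The common engine will be the identity obtained by unfolding the triangle weight,
$$\lambda^2 G_\lambda(\alpha)=\int_{-\lambda}^{\lambda}(\lambda-|\beta|)F(\alpha+\beta)\,d\beta=\int_0^{\lambda}\!\Big(\int_{\alpha-t}^{\alpha+t}F(\beta)\,d\beta\Big)\,dt,$$
so that formally $\frac{d}{d\lambda}\big[\lambda^2 G_\lambda(\alpha)\big]=\int_{\alpha-\lambda}^{\alpha+\lambda}F$; I will realize this by finite differences in the scale parameter. Two uses of $F\ge 0$ recur. First, $t\mapsto\int_{\alpha-t}^{\alpha+t}F$ is nondecreasing. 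Second (the ``nonconcentration'' of $F$): if $\alpha$ is at distance $\gg\mu$ from $\mathbb Z$, then parts (ii) and (iv) of \Cref{lem4-Gfunction} give $G_\mu(\alpha)=s(\alpha)+O\!\big(\tfrac1{\sqrt{\log T}}\big)+O\!\big(\tfrac1{\mu^2\log T}\big)+O(E_G(\mu,L))=O(1)$, whence the identity above forces $\int_J F\ll |J|\,G_\mu(\text{center of }J)\ll|J|$ for every interval $J$ of length $\lesssim\mu$ centered in that region. Thus $F$ has no concentrated mass near non-integers.

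Part (iv) is the softest. Montgomery's theorem gives $F(\alpha)=\alpha+O(1/\sqrt{\log T})$ uniformly on $[\tfrac12,1]$, since there $T^{-2\alpha}\log T$ is negligible; integrating, $\int_{1-\lambda}^{1}F=O(\lambda)$, and the statement at $-1$ follows by evenness of $F$. For part (i) I would build a trapezoid from two scales: $\tfrac4{(2\lambda)^2}k_{2\lambda}-\tfrac1{\lambda^2}k_\lambda$ equals $\tfrac1\lambda$ on $[-\lambda,\lambda]$ and tapers linearly to $0$ on $\lambda\le|\beta|\le 2\lambda$. Since $F\ge0$, this yields the sandwich $\mathbf 1_{[2L-\lambda,2L+\lambda]}\le\lambda\big(\tfrac4{(2\lambda)^2}k_{2\lambda}-\tfrac1{\lambda^2}k_\lambda\big)(\cdot-2L)\le\mathbf 1_{[2L-2\lambda,2L+2\lambda]}$; integrating against $F$ turns the middle term into $\lambda\big(4G_{2\lambda}(2L)-G_\lambda(2L)\big)$, which parts (i) and (iv) of \Cref{lem4-Gfunction} (legitimate because $2\lambda\le\tfrac12$) evaluate as $1+O(\lambda^2)+O(\lambda E_G(\lambda,L))+O\!\big(\tfrac1{\lambda\sqrt{\log T}}\big)$. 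Running the same inequalities at half the scale supplies the matching lower bound for $\int_{2L-\lambda}^{2L+\lambda}F$.

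For part (ii), the hypothesis $\lambda\le\tfrac12\|\alpha\|$ keeps $[\alpha-2\lambda,\alpha+2\lambda]$ away from $\mathbb Z$, so $F$ is nonconcentrated there and $G_\mu(\alpha)=s(\alpha)+O\!\big(\tfrac1{\sqrt{\log T}}\big)+O\!\big(\tfrac1{\mu^2\log T}\big)+O(E_G(\mu,L))$ for all $\mu\asymp\lambda$ by parts (ii) and (iv) of \Cref{lem4-Gfunction}; crucially there is no $O(\mu)$ term in part (ii). I would approximate $\int_{\alpha-\lambda}^{\alpha+\lambda}F$ by the symmetric divided difference $\tfrac1{2h}\big[(\lambda+h)^2G_{\lambda+h}(\alpha)-(\lambda-h)^2G_{\lambda-h}(\alpha)\big]$ with $h\asymp\lambda^2$. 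This divided difference equals $\tfrac1{2h}\int_{\lambda-h}^{\lambda+h}\big(\int_{\alpha-t}^{\alpha+t}F\big)dt$, which differs from $\int_{\alpha-\lambda}^{\alpha+\lambda}F$ by $O(h)$ by nonconcentration; on the other side, inserting the formula for $G_\mu(\alpha)$ makes the main terms collapse to $2\lambda\,s(\alpha)$ while the error terms, divided by $2\lambda h$ and then by $2\lambda$ overall with $h\asymp\lambda^2$, land inside $O(\lambda)+O\!\big(\tfrac1{\lambda^5\sqrt{\log T}}\big)+O\!\big(\tfrac1\lambda E_G(\lambda,L)\big)$.

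Parts (iii) and (v) are the heart and require resolving $F$ near the odd integer $K$ on the fine scale $\lambda^2$. For part (iii) I would first replace $F$ by $G_{\lambda^2}$: since $F$ is nonconcentrated near $K\pm\lambda$, the double-convolution identity gives $\int_{K-\lambda}^{K+\lambda}F=\int_{K-\lambda}^{K+\lambda}G_{\lambda^2}+O(\lambda^2)$. Then split the right-hand side into the thin central piece $\int_{K-\lambda^2}^{K+\lambda^2}G_{\lambda^2}$, which \Cref{Cor5-G(1)ave} applied at scale $\lambda^2$ and shifted to $K$ by periodicity evaluates as $2(P_0-1)+O(E_G(\lambda^2,K))+O\!\big(\tfrac1{\lambda^4\sqrt{\log T}}\big)$, and the outer annulus $\lambda^2\le|\alpha-K|\le\lambda$, on which parts (ii) and (iv) of \Cref{lem4-Gfunction} at scale $\lambda^2$ give $G_{\lambda^2}(\alpha)=s(\alpha)+(\text{errors})$, whence a contribution $\int_{\lambda^2\le|v|\le\lambda}(1-|v|)\,dv+(\text{errors})=2\lambda+O(\lambda^2)$; adding these and using $2\lambda=O(\lambda)$ gives part (iii). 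Part (v) is the pointwise version, most directly obtained by feeding the explicit densities of \Cref{thm1} into the Fourier expansion in part (iii) of \Cref{lem4-Gfunction} at $\alpha=K$ and evaluating the resulting $\operatorname{sinc}^2$-sums by Poisson summation (e.g.\ $\sum_{j\in\mathbb Z}(\tfrac{\sin\pi\lambda j}{\pi\lambda j})^2=\tfrac1\lambda$); alternatively it can be read off from the scale-parameter identity $\lambda^2 G_\lambda(K)=\int_0^\lambda\big(\int_{K-t}^{K+t}F\big)dt$ together with part (iii) for $t\in[\lambda^2,\lambda]$ and monotonicity for $t\in(0,\lambda^2]$. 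The main obstacle throughout (iii) and (v) is precisely this fine-scale bookkeeping: the errors in \Cref{lem4-Gfunction} and \Cref{Cor5-G(1)ave} grow like $(\text{scale})^{-2}$, so the truncation scale must be set at exactly $\lambda^2$ — any smaller and a divergent tail appears, any larger and the delta-mass contribution is missed — in order to produce the advertised powers $\lambda^{-4}$ and $\lambda^{-5}$ in the error terms.
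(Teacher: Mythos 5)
Your proposal uses the same central device as the paper — exploiting the scale‑parameter structure $H_\lambda(\alpha)=\lambda^2G_\lambda(\alpha)=\int_0^\lambda\bigl(\int_{\alpha-t}^{\alpha+t}F\bigr)dt$ together with $F\ge 0$ and \Cref{lem4-Gfunction} — and parts (i), (ii), (iv) are in substance what the paper does (the paper uses the one‑sided differences $\frac{H_{\lambda\pm h}-H_\lambda}{\pm h}$ where you use a trapezoid kernel $\frac{1}{\mu^2}(k_{2\mu}-k_\mu)$ resp.\ a symmetric divided difference; these are equivalent). Where you diverge from the paper is in parts (iii) and (v). The paper's logical order is: (iii) at $K=1$ via \Cref{Cor5-G(1)ave} and Fubini $\Rightarrow$ (iv) $\Rightarrow$ (v) at $K=1$ by splitting $\int_1^{1+\lambda}$ at $1+\lambda^2$ $\Rightarrow$ (v) for all $K$ via \Cref{lem4-Gfunction}\ref{lem4-iv} $\Rightarrow$ (iii) for all $K$ by differencing $H_\lambda(K)$. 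You instead prove (iii) directly for every odd $K$ by replacing $F$ with $G_{\lambda^2}$ on $[K-\lambda,K+\lambda]$ and splitting into a thin central piece (handled by \Cref{Cor5-G(1)ave} at scale $\lambda^2$, shifted by \ref{lem4-iv}) plus the annulus $\lambda^2\le|\alpha-K|\le\lambda$ (handled by \ref{lem4-ii}, \ref{lem4-iv}); this is a valid, arguably cleaner organization that avoids the $K=1$ bootstrap.

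Two technical points deserve flagging. First, your ``nonconcentration'' statement is mis‑scaled: from $\mu^2G_\mu(c)=\int(\mu-|\beta|)F(c+\beta)\,d\beta\ge(\mu-|J|/2)\int_JF$ one gets $\int_JF\ll\mu\,G_\mu(c)$ for $|J|\le\mu$, not $\int_JF\ll|J|\,G_\mu(c)$; to get a bound $\ll|J|$ you must re‑choose the comparison scale $\mu\asymp|J|$, which then brings in the scale‑dependent error $O(1/\mu^2\log T)+O(E_G(\mu,L))$. That is what you implicitly do in (ii) and (iii), and the resulting errors do sit inside the advertised ones, but as written the inequality is false. Second, and more substantively, your ``alternative'' route to (v) via $\lambda^2G_\lambda(K)=\int_0^\lambda\bigl(\int_{K-t}^{K+t}F\bigr)dt$ together with part (iii) for $t\in[\lambda^2,\lambda]$ does not give the stated error terms: applying (iii) at scale $t$ produces an error $O(1/t^4\sqrt{\log T})$, and $\frac{1}{\lambda^2}\int_{\lambda^2}^\lambda\frac{dt}{t^4\sqrt{\log T}}\asymp\frac{1}{\lambda^8\sqrt{\log T}}$ (similarly the $M$‑error becomes $\asymp\lambda^{-8}/M$), which is much worse than the advertised $\lambda^{-5}/\sqrt{\log T}$ and $\lambda^{-5}/M$. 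By contrast your first option for (v) — plugging the nonasymptotic form of \Cref{thm1} into \Cref{lem4-Gfunction}\ref{lem4-iii} at $\alpha=K$ and summing the $\operatorname{sinc}^2$ series by Poisson — does recover the main term $\frac{2(P_0-1)}{\lambda}+O(1)$, though one must use the explicit error statements from the proof of \Cref{thm1} (not the limiting $p_{k/2}$, since (v) is stated in terms of $P_0$, not $p_0$) and check that the $k$‑sums of those errors against $\operatorname{sinc}^2(\lambda\pi k/2)$ remain within $O(1/\lambda)$ times $E_G$; this appears to work but is a genuinely different computation from the paper's, which derives (v) directly from (iii) at $K=1$ without Poisson summation.
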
 

With regard to \Cref{lem5-forThm4} \ref{lem5-v}, note by \Cref{lem4-Gfunction} \ref{lem4-i} and \ref{lem4-iv} that for even integers $2K$ we have
\[ G_\lambda(2K) = G_\lambda(0) +O(E_G(\lambda,2K)) = \frac1{\lambda} +O(\lambda) +O(E_G(\lambda,2K))+O\left(\frac{1}{\lambda^2\sqrt{\log T}}\right). \]

For fixed $\alpha$, from \eqref{G_lambda} define
\begin{equation} \label{Hthm4} H_\lambda(\alpha) := \lambda^2 G_\lambda(\alpha) = \int_{-\infty}^\infty k_\lambda(\beta)F(\alpha+\beta)\, d\beta. \end{equation}
For $0<h\le \lambda/2$, we have
\begin{equation}\label{F-Hthm4} \frac{H_\lambda(\alpha) - H_{\lambda -h}(\alpha)}{h} \le 
\int_{-\lambda}^{\lambda}F(\alpha +\beta)\, d\beta \le \frac{H_{\lambda+h}(\alpha)- H_\lambda(\alpha) }{h},\end{equation}
which follows immediately from 
\[ \frac1h\left( k_\lambda(\beta) - k_{\lambda-h}(\beta)\right) \le \1_{[-\lambda,\lambda]}(\beta) \le \frac1h \left( k_{\lambda+h}(\beta) - k_{\lambda}(\beta)\right).\]
\medskip

\begin{proof}[Proof of \Cref{lem5-forThm4} \ref{lem5-i}] From \Cref{lem4-Gfunction} \ref{lem4-i} and \ref{lem4-iv}, we have for fixed $0<\lambda\le 1/4$
\[\begin{split} H_\lambda(2L) &= H_\lambda(0) +O(\lambda^2E_G(\lambda, L))\\& = \lambda^2\left( \frac{1}{\lambda} + O(\lambda)+ O\left(\frac{1}{\lambda\sqrt{\log T}}\right)+O\left(\frac{1}{\lambda^2 \log T}\right) +O(E_G(\lambda,L)) \right) \\&
= \lambda +O(\lambda^3) +O\left(\frac{1}{\sqrt{\log T}}\right) +O(\lambda^2E_G(\lambda,L)),
\end{split} \]
where we have combined and slightly degraded the middle error terms.
Taking $\lambda/4\le h\le \lambda/2$, we have 
\[ \frac{H_{\lambda \pm h}(2L) -H_\lambda(2L)}{\pm h} = 1 + O(\lambda^2)+O(\lambda E_G(\lambda,L))+O\left(\frac{1}{\lambda\sqrt{\log T}}\right). \]
Hence by \eqref{F-Hthm4} we have 
\[ \int_{2L-\lambda}^{2L+\lambda} F(\beta)\, d\beta = 1 + O(\lambda^2)+O(\lambda E_G(\lambda,L))+ O\left(\frac{1}{\lambda\sqrt{\log T}}\right) .\]
 \end{proof}
\begin{proof}[Proof of \Cref{lem5-forThm4} \ref{lem5-ii}]
Take a fixed $\alpha \in \mathbb{R}\setminus \mathbb{Z}$, and choose $\lambda$ so that $0<\lambda\le \frac12 \| \alpha \|$, and thus
\[ [\alpha]+2\lambda \le \alpha \le [\alpha]+ 1-2\lambda. \]
We now define
\begin{equation*} 2L :=
\begin{cases}[\alpha],
 & \text{ if $[\alpha]$ is even,} \\
[\alpha]+1,
 & \text{ if $[\alpha]$ is odd.}
\end{cases}
\end{equation*}
and let $\alpha^*:= \alpha -2L$, so that $-1+2\lambda \le \alpha^*\le - 2\lambda$ if $[\alpha]$ is odd and 
$2\lambda \le \alpha^*\le 1 - 2\lambda$ if $[\alpha]$ is even. 
Then Lemma \ref{lem4-Gfunction} \ref{lem4-iv} and \ref{lem4-ii} implies
\begin{equation}\begin{split} \label{ii)step1} G_\lambda(\alpha) &= G_\lambda(\alpha^*) + O(E_G(\lambda, L)) \\&
= |\alpha^*| +O\left(\frac{1}{\sqrt{\log T}}\right) + O\left(\frac{1}{\lambda^2 \log T}\right) + O(E_G(\lambda, L)),\end{split}\end{equation}
and thus by \eqref{Hthm4} and recalling $s(\alpha)$ from \eqref{s(alpha)},
\[H_\lambda(\alpha) = \lambda^2s(\alpha) +O\left(\frac{\lambda^2}{\sqrt{\log T}}\right)+O\left(\frac{1}{\lambda^2 \log T}\right)+ O(\lambda^2 E_G(\lambda, L)). \]
Hence, taking $0<h\le \lambda/2$, 
\[ \begin{split} \frac{H_{\lambda \pm h}(\alpha) - H_\lambda(\alpha)}{\pm h} &= \frac{((\lambda \pm h)^2-\lambda^2)s(\alpha)}{\pm h} \\
&\qquad+ O\left(\frac{\lambda^2}{h\sqrt{\log T}}\right) + O\left(\frac{1}{h\lambda^2 \log T}\right) + O\left(\frac{\lambda^2}{h}E_G(\lambda,L)\right) \\&
= 2\lambda s(\alpha) + O(h) +O\left(\frac{\lambda^2}{h\sqrt{\log T}}\right) + O\left(\frac{1}{h\lambda^2 \log T}\right) + O\left(\frac{\lambda^2 }{h}E_G(\lambda,L)\right).
\end{split}\]
We conclude by \eqref{F-Hthm4} that
\[\frac{1}{2\lambda}\int_{\alpha-\lambda}^{\alpha +\lambda} F(\beta)\, d\beta = s(\alpha) + O(\frac{h}{\lambda}) + O\left(\frac{\lambda}{h\sqrt{\log T}}\right) + O\left(\frac{1}{h\lambda^3 \log T}\right) + O\left(\frac{\lambda }{h}E_G(\lambda,L)\right). \]
We complete the proof by choosing $h=\lambda^2$ and degrade slightly the two middle error terms to obtain
\[\frac{1}{2\lambda}\int_{\alpha-\lambda}^{\alpha +\lambda} F(\beta)\, d\beta = s(\alpha) + O(\lambda) + O\left(\frac{1}{\lambda^5\sqrt{\log T}}\right) + O\left(\frac{1}{\lambda}E_G(\lambda, L)\right). \]
\end{proof}

\begin{proof}[Proof of \Cref{lem5-forThm4} \ref{lem5-iii}, \ref{lem5-iv}, and \ref{lem5-v}] The steps in this proof are to first prove \ref{lem5-iii} when $K=1$, which then almost immediately gives \ref{lem5-iv}. Using this result we prove \ref{lem5-v} when $K=1$. Next, by \Cref{lem4-Gfunction} \ref{lem4-iv} we obtain \ref{lem5-v} for all odd $K$, and then we obtain from this \ref{lem5-iii} for all odd $K$. The idea here is the average of $G_\lambda(\alpha)$ in the interval $(1-\lambda, 1+\lambda)$ is just the unweighted average of $F(\alpha)$ in $(1-2\lambda,1+2\lambda)$ which we then obtain from \Cref{Cor5-G(1)ave}, which then allows us to obtain $G_\lambda(1)$. Next, we obtain $G_\lambda(K)$ immediately from \Cref{lem4-Gfunction} \ref{lem4-iv}, and then a differencing argument recovers the unweighted average of $F(\alpha)$ in an interval around $K$.

We first recall from \Cref{Cor5-G(1)ave} that
\[\int_{1-\lambda}^{1+\lambda}G_{\lambda}(\alpha) \, d\alpha = 2(P_0-1) +O(\lambda)+ O( E_G(\lambda,1)) +O\left(\frac{1}{\lambda^2\sqrt{\log T}}\right).\]
By \eqref{G_lambda}
\[ \begin{split} 
\int_{1-\lambda}^{1+\lambda} G_\lambda(\alpha)\, d\alpha &= \int_{1-\lambda}^{1+\lambda}\frac{1}{\lambda^2}\int_{-\lambda}^\lambda (\lambda -|\beta|)F(\alpha +\beta)\, d\beta \,d\alpha \\ &
= \frac{1}{\lambda^2}\int_{1-\lambda}^{1+\lambda}\int_{\alpha-\lambda}^{\alpha+\lambda}(\lambda -|w-\alpha|)F(w)\, dw \, d\alpha \\&
= \frac{1}{\lambda^2}\int_{1-2\lambda}^{1+2\lambda}F(w)\left(\int_{w-\lambda}^{w+\lambda}(\lambda -|w-\alpha|) \, d\alpha\right) \, dw \\&
=\int_{1-2\lambda}^{1+2\lambda}F(w)\, dw,
\end{split} \]
and therefore
\[ \int_{1-2\lambda}^{1+2\lambda}F(w)\, dw = 2(P_0-1) + O(\lambda) + O(E_G(\lambda,1)) +O\left(\frac{1}{\lambda^2\sqrt{\log T}}\right).\]
On replacing $2 \lambda$ by $\lambda$ we obtain a slightly stronger version of \ref{lem5-iii} when $K=1$ since $E_G(\lambda,\alpha)$ is a decreasing function of $\lambda$. By \hyperref[MonThm]{\textbf{MT}} we have $F(w)\ll 1$ for $1-\lambda \le w\le 1$ and therefore
\[ \int_{1-\lambda}^{1+\lambda}F(w)\, dw = \int_{1}^{1+\lambda}F(w)\, dw + O(\lambda),\]
which proves \ref{lem5-iv} when $K=1$. We thus conclude that 
\begin{equation} \label{K=1iii} \int_{1}^{1+\lambda}F(w)\, dw = 2(P_0-1) + O(\lambda) + O(E_G(\lambda,1)) +O\left(\frac{1}{\lambda^2\sqrt{\log T}}\right). \end{equation}
Similarly for $K=-1$ by \hyperref[MonThm]{\textbf{MT}} the corresponding result in \Cref{lem5-forThm4} \ref{lem5-iv} also follows.

We now prove \ref{lem5-v} for $K=1$. Taking $0<\eta \le \lambda/2$, we have as above by \eqref{G_lambda}, \hyperref[MonThm]{\textbf{MT}}, and $F\ge 0$ that 
\[\begin{split} G_\lambda(1) &= 
\frac{1}{\lambda^2}\int_{1-\lambda}^{1+\lambda}(\lambda -|w-1|)F(w)\, dw \\&
= \frac{1}{\lambda^2}\int_1^{1+\lambda}(\lambda -|w-1|)F(w)\, dw +O(1) \\&
= \frac{\lambda+O(\eta)}{\lambda^2}\int_{1}^{1+\eta}F(w)\, dw + O\left(\frac{1}{\lambda }\int_{1+\eta}^{1+\lambda}F(w)\, dw 
 \right)+O(1)\\&
= \frac{\lambda+O(\eta)}{\lambda^2}\int_{1}^{1+\eta}F(w)\, dw + O\left(\frac{1}{\lambda }\left(\int_1^{1+\lambda}F(w)\, dw -\int_1^{1+\eta}F(w)\, dw \right)\right)+O(1).
\end{split}\]
Using \eqref{K=1iii} and that $E(\lambda,\alpha)$ is a decreasing function of $\lambda$, we have
\[\begin{split} G_\lambda(1) &= \frac{\lambda+O(\eta)}{\lambda^2}\left(2(P_0-1) + O(\eta)+ O( E_G(\eta,1)) +O\left(\frac{1}{\eta^2 \sqrt{\log T}}\right)\right) \\& \qquad + O\left(\frac{1}{\lambda }\left( \lambda+ E_G(\eta,1) +\frac{1}{\eta^2\sqrt{\log T}}\right)\right)+O(1) \\&
= \frac{2(P_0-1)}{\lambda} + O\left(\frac{\eta}{\lambda^2}\right) + O(1) + O\left(\frac{E_G(\eta,1)}{\lambda} \right) +O\left(\frac{1}{\eta^2\lambda\sqrt{\log T}}\right).\end{split} \]
With the choice $\eta = \lambda^2$, we have
\[ G_\lambda(1)= \frac{2(P_0-1)}{\lambda} + O(1) + O\left(\frac{E_G(\lambda^2,1)}{\lambda} \right) +O\left(\frac{1}{\lambda^5\sqrt{\log T}}\right),\]
which proves \ref{lem5-v} when $K=1$. By \Cref{lem4-Gfunction} \ref{lem4-iv} 
\[ \begin{split} G_\lambda(K)&= G_\lambda(2L+1) = G_{\lambda}(1) +O(E_G(\lambda,K))\\&
= \frac{2(P_0-1)}{\lambda} + O(1) + O\left(\frac{E_G(\lambda^2,K)}{\lambda} \right) + O\left(\frac{1}{\lambda^5\sqrt{\log T}}\right),
\end{split}\]
which completes the proof of \ref{lem5-v}. 

It remains to prove \ref{lem5-iii} when $K\neq 1$. From the last equation and \eqref{Hthm4} we have 
\[H_\lambda(K) = \lambda^2 G_\lambda(K) = 2(P_0-1)\lambda +O(\lambda^2) + O(\lambda E_G(\lambda^2,K)) +O\left(\frac{1}{\lambda^3\sqrt{\log T}}\right), \]
and therefore, for $0< h \le \lambda/2$,
\[ \frac{H_{\lambda \pm h}(K) -H_\lambda(K)}{\pm h} = 2(P_0-1) +O\left(\frac{\lambda^2}{h}\right) + O\left(\frac{\lambda}{h} E_G(\lambda^2,K)\right) + O\left(\frac{1}{h\lambda^3\sqrt{\log T}}\right).\]
Hence, choosing $h=\lambda/2$, we conclude by \eqref{F-Hthm4} that
\[ \int_{-\lambda}^{\lambda} F(K+\beta)\, d\beta = 2(P_0-1) +O(\lambda) + O( E_G(\lambda^2,K)) + O\left(\frac{1}{\lambda^4\sqrt{\log T}}\right), \]
which completes the proof.
\end{proof}

\section{Proof of \Cref{thm4-r(k)p_k} and \Cref{cor4}}

\begin{proof}[Proof of \Cref{thm4-r(k)p_k}] In this proof we are taking $T \to \infty$ and then making $M$ large and $\lambda$ appropriately small in \Cref{lem5-forThm4}.
First, using \Cref{lem5-forThm4} \ref{lem5-i} and \ref{lem5-iii}, we have for $L\in \mathbb{Z}$ and the definition of $\mathcal{F}$ in \eqref{calF},
\[ \int_{2L-\lambda}^{2L+\lambda}F(\alpha)\, d\alpha = 1 +o(1) \sim \int_{2L-\lambda}^{2L+\lambda} \delta_{2L}(\alpha)\, d\alpha \sim \int_{2L-\lambda}^{2L+\lambda}\mathcal{F}(\alpha)\, d\alpha ,\]
and similarly with $K=2L+1$
\[ \int_{K-\lambda}^{K+\lambda}F(\alpha)\, d\alpha = 2(P_0-1) +o(1) \sim 2(P_0-1)\int_{K-\lambda}^{K+\lambda} \delta_{K}(\alpha)\, d\alpha \sim \int_{K-\lambda}^{K+\lambda}\mathcal{F}(\alpha)\, d\alpha .\]
We now consider the results with the one-sided delta functions contained in $\mathcal{F}$. For $L=0$, we have, since $F$ is even, 
\[\int_{-\lambda}^\lambda F(\beta)\, d\beta = 
2 \int_{-\lambda}^0 F(\beta)\, d\beta =2\int_{0}^\lambda F(\beta)\, d\beta ,\]
and assuming only RH we apply \hyperref[MonThm]{\textbf{MT}} and have
\[\int_{0}^\lambda F(\beta)\, d\beta = \frac12 +O(\lambda) +o(1) = \int_0^\lambda \delta_0^{=}(\beta)\, d\beta +O(\lambda) +o(1),\] and similarly for the left-side integral. 
Similarly, for $K=\pm 1$, the results with the one-sided delta functions contained in $\mathcal{F}$
follow immediately from \Cref{lem5-forThm4} \ref{lem5-iii} and \ref{lem5-iv}. 
 
Next consider a function $g(\alpha)$ which is Lipschitz continuous at $\alpha =N\in \mathbb{Z}$. Then 
\[ \int_{N-\lambda}^{N+\lambda}F(\alpha)g(\alpha)\, d\alpha \sim g(N) \int_{N-\lambda}^{N+\lambda} \mathcal{F}(\alpha)\, d\alpha , \]
and the corresponding one-sided results hold when $N=0,\pm 1$.

Next, by \Cref{lem5-forThm4} \ref{lem5-ii}
for an interval $[a-\lambda,a+\lambda]\subset (N+\lambda,N+1-\lambda) $ for $N\in \mathbb{Z}$, we have 
\[ \int_{a-\lambda}^{a+\lambda}F(\alpha)\, d\alpha \sim \int_{a-\lambda}^{a+\lambda} s(\alpha) \, d\alpha \sim \int_{a-\lambda}^{a+\lambda} \mathcal{F}(\alpha)\, d\alpha . \]
Now suppose $[a,b]\subset (N+\lambda,N+1-\lambda) $ for $N\in \mathbb{Z}$. For a step function $r(\alpha)$ in this interval, we pick $\lambda$ smaller than the mesh of this step function and can write $r$ as a step function over intervals of the form $(c_j-\lambda, c_j+\lambda)$, and conclude
\[ \int_a^b F(\alpha) r(\alpha) \, d\alpha \sim \int_a^b s(\alpha) r(\alpha) \, d\alpha \sim \int_a^b \mathcal{F}(\alpha) r(\alpha) \, d\alpha .\]
Thus by approximation this also holds for any Riemann integrable function $g(\alpha)$.

Now for a general interval $[a,b]$, $a,b\not \in \mathbb{Z}$, we have writing $[a] = N-1$, $[b] =M$, and taking $\lambda < \min(\|a\|,\|b\|)$,
\begin{align*}
\int_a^bF(\alpha)g(\alpha)\, d\alpha &= \int_a^{N-\lambda}Fg + \sum_{N\le j \le M} \int_{j-\lambda}^{j+\lambda}Fg + \sum_{N\le j \le M-1} \int_{j+\lambda}^{j+1-\lambda}Fg + \int_{M+\lambda}^{b}Fg \\
&\sim \int_a^b\mathcal{F}(\alpha)g(\alpha)\, d\alpha,
\end{align*}
since each integral in the middle step has been evaluated above to give the right-hand side. If $a$ or $b$ are equal to $0$ or $\pm 1$ we use the one-sided delta functions to obtain the same result. 
\end{proof}
\begin{proof}[Proof of Corollary \ref{cor4}]
Assuming Montgomery's conjecture, we have for any large $U>0$,
\begin{align*}
\int_1^\infty \frac{F(\alpha)}{\alpha^2}\, d\alpha
= \int_1^U \frac{1+o(1)}{\alpha^2}\, d\alpha + \int_U^\infty \frac{F(\alpha)}{\alpha^2}\, d\alpha.
\end{align*}
By \cite[Lemma A]{Goldston87}, we have uniformly for all $a=a(T)$,
\[\int_a^{a+1} F(\alpha)\, d\alpha \ll 1.\] 
Hence we have
$$ 0\le \int_U^\infty \frac{F(\alpha)}{\alpha^2}\, d\alpha
\le \sum_{n=[U]}^\infty \frac1{n^2} \int_n^{n+1} F(\alpha)\, d\alpha
\ll \sum_{n=[U]}^\infty \frac1{n^2} \ll \frac1U.
$$
and therefore
$$ \int_1^\infty \frac{F(\alpha)}{\alpha^2}\, d\alpha
= 1 + O(U^{-1}) + o(1), $$
which upon letting $U=U(T)\to\infty$ as $T\to \infty$ gives
$$ \int_1^\infty \frac{F(\alpha)}{\alpha^2}\, d\alpha
= 1 + o(1). $$
Thus 
$$ \mathcal{C} = \lim_{T\to\infty} \int_1^\infty \frac{F(\alpha)}{\alpha^2}\, d\alpha = 1. $$

Now assume RH and \hyperref[AH2]{\textbf{AH-Pairs}}. We obtain as above
\begin{equation} \label{int-F-asymp}
\int_1^\infty \frac{F(\alpha)}{\alpha^2}\, d\alpha = \int_1^U \frac{F(\alpha)}{\alpha^2}\, d\alpha + O(U^{-1})
\sim \int_1^U\frac{\mathcal{F}(\alpha)}{\alpha^2}\, d\alpha + O(U^{-1}).
\end{equation}
Applying \Cref{thm4-r(k)p_k} to the interval $[1,U]$, with $U\not \in \mathbb{Z}$, we have
\begin{equation} \label{int-mathcal-F}
\begin{aligned}
\int_1^U \frac{\mathcal{F}(\alpha)}{\alpha^2}\, d\alpha &= \sum_{\substack{n\\ 2n+1<U}}
\left( \int_{2n-1}^{2n} \frac{2n-\alpha}{\alpha^2}\, d\alpha
+ \int_{2n}^{2n+1} \frac{\alpha-2n}{\alpha^2}\, d\alpha
+ \frac{2(P_0-1)}{(2n-1)^2} + \frac1{(2n)^2}\right) \\
&\qquad+ O(U^{-2}) \\
&= \sum_{n=1}^\infty
\left( \int_{2n-1}^{2n} \frac{2n}{\alpha^2}\, d\alpha
- \int_{2n}^{2n+1} \frac{2n}{\alpha^2}\, d\alpha
-\log\frac{2n}{2n-1} + \log\frac{2n+1}{2n} \right) \\
&\qquad+ 2(P_0-1)\sum_{n=1}^\infty\frac1{(2n-1)^2} + \frac14\zeta(2) + O(U^{-1}) \\
&= \sum_{n=1}^\infty
\left( \frac1{2n-1} - \frac1{2n+1}\right)+\sum_{n=1}^\infty \log\left(1-\frac1{4n^2}\right)
+ \left(\frac32(P_0-1)+\frac14\right)\zeta(2) \\
&\qquad+ O(U^{-1}) \\
&= 1 + \left(\frac32(P_0-1)+\frac14\right)\frac{\pi^2}6 + \sum_{n=1}^\infty \log\left(1-\frac1{4n^2}\right) + O(U^{-1}).
\end{aligned}
\end{equation}
Recalling
$$ \frac{\sin{z}}z = \prod_{n=1}^\infty \left(1-\frac{z^2}{(n\pi)^2}\right), $$
we have
\begin{align*}
\sum_{n=1}^\infty \log\left(1-\frac1{4n^2}\right)
= \log \prod_{n=1}^\infty \left(1-\frac1{4n^2}\right)
= \log \prod_{n=1}^\infty \left(1-\frac{(\pi/2)^2}{n^2\pi^2}\right)
= \log\frac2\pi.
\end{align*}
Hence substituting this into \eqref{int-F-asymp} and \eqref{int-mathcal-F}, we obtain
$$ \int_1^\infty \frac{F(\alpha)}{\alpha^2}\, d\alpha
= 1 + \left(\frac32(P_0-1)+\frac14\right)\frac{\pi^2}6 + \log\frac2\pi +o(1)$$
on letting $U=U(T)\to \infty$ as $T\to\infty$.
If the limiting density
$$ p_0 = \lim_{T\to\infty} P_0(T) $$
exists, then the above asymptotic implies that
$$ \mathcal{C} = \lim_{T\to\infty} \int_1^\infty \frac{F(\alpha)}{\alpha^2}\, d\alpha
= 1 + \left(\frac32(p_0-1)+\frac14\right)\frac{\pi^2}6 + \log\frac2\pi. $$
\end{proof}

\section{Proof of \Cref{thm2-generalizedMTwith_p_k}}

\begin{proof}[Proof of \Cref{thm2-generalizedMTwith_p_k}] Let $\widetilde{p}_0 = p_0-1$. Then we see by \hyperref[AH-density]{\textbf{AH-Density}} that
\[ \begin{split} \mathcal{S}(r):=\sum_{k\in \mathbb{Z}} \widehat{r}(k/2)p_{k/2} & = \sum_{\substack{k\in \mathbb{Z}\\ k \text{\ even}}}\left(p_0-\frac12\right) \widehat{r}(k/2)+ \sum_{\substack{k\in \mathbb{Z}\\ k \text{\ odd}}}\left(\frac32 - \frac{2}{\pi^2k^2}-p_0\right) r(k/2)\\ &
= \widetilde{p}_0\left( \sum_{\substack{k\in \mathbb{Z}\\ k \text{\ even}}} \widehat{r}(k/2) - \sum_{\substack{k\in \mathbb{Z}\\ k \text{\ odd}}} \widehat{r}(k/2)\right) \\
&\qquad+ \left( \frac12 \sum_{k\in \mathbb{Z}} \widehat{r}(k/2) - \frac{2}{\pi^2}\sum_{\substack{k\in \mathbb{Z}\\ k \text{\ odd}}}\frac{1}{k^2} \widehat{r}(k/2)\right).
\end{split} \]
We first use the Poisson Summation Formula (PSF) to show that
\begin{equation} \label{even=odd} \sum_{\substack{k\in \mathbb{Z}\\ k \text{\ even}}} \widehat{r}(k/2) = \sum_{\substack{k\in \mathbb{Z}\\ k \text{\ odd}}} \widehat{r}(k/2), \end{equation} 
and therefore 
\begin{equation} \label{S(r)1} \mathcal{S}(r) = \frac12 \sum_{k\in \mathbb{Z}} \widehat{r}(k/2) - \frac{2}{\pi^2}\sum_{\substack{k\in \mathbb{Z}\\ k \text{\ odd}}}\frac{1}{k^2} \widehat{r}(k/2).\end{equation}
The PSF states that with certain conditions on $f \in L^1(\mathbb{R})$ we have, for any real number $\lambda >0$, that
\begin{equation} \label{PSF}
\sum_{k\in \mathbb{Z}} \lambda f(\lambda k) = \sum_{k\in \mathbb{Z}} \widehat{f}(k/\lambda). \end{equation}
We use \cite[Appendix D.2]{MontgomeryVaughan2007} from which it follows that PSF holds as stated above if $f \in L^1(\mathbb{R})$ is even and of bounded variation, and $\widehat{f} \in L^1(\mathbb{R})$.\footnote[2]{It is shown in \cite[Ch 6, Section 1, Pr. 15]{Katznelson76} that PSF may not hold even when both $f, \widehat{f} \in L^1(\mathbb{R})$ and thus $f,\widehat{f}$ are both continuous and both series in PSF are absolutely convergent.} Thus, taking $\lambda = 1$ in \eqref{PSF}, we have
\[ \sum_{k\in \mathbb{Z}} \widehat{r}( k) = \sum_{k\in \mathbb{Z}} r(k). \]
Since $r(\alpha)$ is continuous and supported in $|\alpha|\le 1$, we have $r(k)=0$ for $k\neq 0$, and therefore
\begin{equation*} \sum_{\substack{k\in \mathbb{Z}\\ k \text{\ even}}} \widehat{r}(k/2) = \sum_{k\in \mathbb{Z}} \widehat{r}( k) = r(0).\end{equation*}
Next, taking $\lambda=2$ in \eqref{PSF}, we have
\begin{equation*} \sum_{\substack{k\in \mathbb{Z}\\ k \text{\ even}}} \widehat{r}(k/2)+\sum_{\substack{k\in \mathbb{Z}\\ k \text{\ odd}}} \widehat{r}(k/2) = 
\sum_{k\in \mathbb{Z}} \widehat{r}(k/2) = 2 \sum_{k\in \mathbb{Z}} r(2k) = 2 r(0),
\end{equation*}
and \eqref{even=odd} follows from these last two equations.

We now expand $r(\alpha)$ formally into a Fourier series in $|\alpha|\le 1$. Thus 
\[ r(\alpha) = \sum_{k\in \mathbb{Z}} a_k e(k\alpha/2) , \qquad \text{for $|\alpha |\le 1$ } ,
\]
where 
\[ a_k = \frac12\int_{-1}^1r(\alpha) e(-k\alpha/2)\, d\alpha = \frac12\int_{-\infty}^\infty r(\alpha) e(-k\alpha/2)\, d\alpha = \frac12 \widehat{r}(k/2).\]
Thus we have that 
\begin{equation*} r(\alpha) =\frac12 \sum_{k\in \mathbb{Z}} \widehat{r}(k/2) e(k\alpha/2) , \qquad \text{for $|\alpha |\le 1$ }, \end{equation*}
where the series converges to $r(\alpha)$ by bounded variation. Thus
\[ \begin{split} r(0) +2 \int_0^1\alpha r(\alpha)\, d\alpha &= \frac12 \sum_{k\in \mathbb{Z}} \widehat{r}(k/2) + \sum_{k\in \mathbb{Z}} \widehat{r}(k/2)\operatorname{Re}\int_0^1 \alpha e(k\alpha/2)\, d\alpha \\&
= \frac12 \sum_{k\in \mathbb{Z}} \widehat{r}(k/2) - \frac{2}{\pi^2}\sum_{\substack{k\in \mathbb{Z}\\ k\ \text{odd}}}\frac{1}{k^2} \widehat{r}(k/2),
\end{split} \]
which by \eqref{S(r)1} completes the proof.
\end{proof}

\section*{Acknowledgment and Funding}

The authors thank the American Institute of Mathematics for their hospitality and for providing a pleasant environment where work on this research began.
The third author was supported by JSPS KAKENHI Grant Numbers 18K13400 and 22K13895, and also by MEXT Initiative for Realizing Diversity in the Research Environment during the beginning of this work, and later was also supported by the Inamori Research Grant 2024 when we completed this work.
The fourth author was partially supported by NSF DMS-1902193 and NSF DMS-1854398 FRG during the beginning of this work and is currently partially supported by NSF CAREER DMS-2239681.

\bibliographystyle{alpha}
\bibliography{AHReferences}

\end{document}